\let\olddiamond\diamond
\let\oldsquare\square 
\renewcommand{\square}{\oldsquare}
\renewcommand{\diamond}{\olddiamond}
\numberwithin{equation}{section}
\numberwithin{figure}{section}
\newtheorem{theorem}{Theorem}[section]
\newtheorem{corollary}[theorem]{Corollary}
\newtheorem{proposition}[theorem]{Proposition}
\newtheorem{lemma}[theorem]{Lemma}
\theoremstyle{definition}
\newtheorem{example}[theorem]{Example}
\theoremstyle{theorem}
\theoremstyle{plain}
\newcommand*{\Id}{\ensuremath{\mathrm{I}_d}}
\newcommand*{\Zd}{\ensuremath{\mathbb{Z}^d}}
\newcommand{\eps}{\varepsilon}
\renewcommand{\P}{\ensuremath{\mathbb{P}}}
\renewcommand{\b}{\ensuremath{\mathbf{b}}}
\newcommand{\s}{\mathbf{s}}
\newcommand{\ep}{\eps}
\renewcommand{\L}{\underline{L}}
\DeclareSymbolFont{boldoperators}{OT1}{cmr}{bx}{n}
\newcommand\thickbar[1]{\accentset{\rule{.45em}{.6pt}}{#1}}
\renewcommand{\bar}{\thickbar}
\renewcommand{\a}{\mathbf{a}}
\renewcommand{\k}{\mathbf{k}}
\newcommand{\ahom}{\bar{\a}}
\newcommand{\bhom}{\bar{\mathbf{b}}}
\newcommand{\shom}{\bar{\mathbf{s}}}
\newcommand{\khom}{\bar{\mathbf{k}}}
\newcommand{\bfA}{\mathbf{A}}
\newcommand{\bfAhom}{\overline{\mathbf{A}}}
\newcommand{\Bring}{\mathring{\underline{B}}}
\newcommand{\Hring}{\mathring{\underline{H}}}
\newcommand{\negphantom}{\v@true\h@true\negph@nt} 
\newcommand{\neghphantom}{\v@false\h@true\negph@nt} 
\newcommand{\negph@nt}{\ifmmode\expandafter\mathpalette 
  \expandafter\mathnegph@nt\else\expandafter\makenegph@nt\fi} 
\newcommand{\makenegph@nt}[1]{%
  \setbox\z@\hbox{\color@begingroup#1\color@endgroup}\finnegph@nt} 
\newcommand{\finnegph@nt}{%
  \setbox\tw@\null 
  \ifv@ \ht\tw@\ht\z@\dp\tw@\dp\z@\fi \ifh@\wd\tw@-\wd\z@\fi\box\tw@} 
\newcommand{\mathnegph@nt}[2]{%
  \setbox\z@\hbox{$\m@th #1{#2}$}\finnegph@nt} 
\newcommand{\Hminusuls}[1]{\hat{\phantom{H}}\negphantom{H}\underline{H}^{#1}}
\def\Xint#1{\mathchoice
{\XXint\displaystyle\textstyle{#1}}%
{\XXint\textstyle\scriptstyle{#1}}%
{\XXint\scriptstyle\scriptscriptstyle{#1}}%
{\XXint\scriptscriptstyle\scriptscriptstyle{#1}}%
\!\int}
\def\XXint#1#2#3{{\setbox0=\hbox{$#1{#2#3}{\int}$}
\vcenter{\hbox{$#2#3$}}\kern-.5\wd0}}
\def\fint{\Xint-}
\newcommand{\avsum}{\mathop{\mathpalette\avsuminner\relax}\displaylimits}
\newcommand\avsuminner[2]{%
  {\sbox0{$\m@th#1\sum$}%
   \vphantom{\usebox0}%
   \ooalign{%
     \hidewidth
     \smash{\,\rule[.23em]{8.8pt}{1.1pt} \relax}%
     \hidewidth\cr
     $\m@th#1\sum$\cr
   }%
  }%
}
\newcommand\avsuminnerr[2]{%
  {\sbox0{$\m@th#1\sum$}%
   \vphantom{\usebox0}%
   \ooalign{%
     \hidewidth
     \smash{\,\rule[.23em]{6pt}{0.7pt} \relax}%
     \hidewidth\cr
     $\m@th#1\sum$\cr
   }%
  }%
}
\let\originalleft\left
\let\originalright\right
\renewcommand{\left}{\mathopen{}\mathclose\bgroup\originalleft}
\renewcommand{\right}{\aftergroup\egroup\originalright}
\newcommand{\innerbox}[2]{%
	\raisebox{0.0ex}{%
		\ooalign{%
		\raisebox{#2ex}{\scalebox{#1}{$\square$}}%
	}}
}
\newcommand{\cu}{%
  \mathbin{%
    \mathchoice
      {\innerbox{1.0}{-0.2}}%
      {\innerbox{1.0}{-0.2}}%
      {\innerbox{0.75}{-0.2}}%
      {\innerbox{0.8}{-0.2}}%
  }%
}
\newcommand{\indc}{{\boldsymbol{1}}}
\renewcommand{\hat}{\widehat}
\newcommand{\addperiod}[1]{#1.}
\titleformat{\subsection}[runin]
  {\normalfont\bfseries}
  {\thesubsection.}
  {0.5em}
  {\addperiod}
\titleformat{\subsubsection}[runin]
  {\normalfont\bfseries}
  {\thesubsubsection.}
  {0.5em}
  {\addperiod}
\titleformat*{\subsubsection}{\normalfont\itshape}
\titleformat*{\paragraph}{\bfseries}
\titleformat*{\subparagraph}{\large\bfseries}
\title{Stochastic homogenization of coarse‑grained elliptic equations}
\author{
Aidan Lau
\thanks{Courant Institute of Mathematical Sciences, New York University.
{\footnotesize \href{mailto:aidan.lau@nyu.edu}{aidan.lau@nyu.edu}.}
}
}
\date{May 11, 2026}
\begin{document}

\maketitle

\begin{abstract}
We prove quenched stochastic homogenization for divergence-form elliptic equations, under the assumption that the coefficients are stationary, ergodic, integrable, and satisfy a coarse-grained ellipticity assumption. The ellipticity assumption requires that the coefficients remain bounded in a negative regularity sense on large scales. As a corollary, we recover a sufficient joint integrability condition on the symmetric and skew-symmetric parts of the coefficient field.
\end{abstract}

\section{Introduction}

\subsection{Statement of results} We consider the elliptic equation
\begin{equation}
\label{e.PDE}
-\nabla \cdot \a(\cdot)\nabla u = \nabla \cdot \mathbf{f} \quad \mbox{in } U\subset \mathbb{R}^d \quad \mbox{with} \quad u = g \mbox{ on } \partial U\,,
\end{equation}
where~$d\geq 2$ and the coefficient field~$\a:\mathbb{R}^d \to \mathbb{R}^{d\times d}$ is given by a probability measure~$\mathbb{P}$ which is stationary and ergodic. We assume that the coefficient field~$\a$ satisfies a weak, coarse-grained ellipticity condition, which measures the size of a coefficient field in a negative-regularity sense by discounting behaviour on small scales. This notion of ellipticity, introduced in~\cite{AK.HC}, permits degenerate or singular fields which, in particular, may not satisfy an~$L^p$-type ellipticity condition. By adapting the methods of~\cite{AK.HC} we present a qualitative homogenization result within the same framework. Our assumptions are stated fully in Section~\ref{ss.assumptions}. Throughout the paper we decompose~$\a = \s + \k$ into its symmetric part~$\s= \frac12(\a+\a^t)$ and skew-symmetric part~$\k=\frac12(\a-\a^t)$.

\begin{theorem}
\label{t.main}
Assume~$\mathbb{P}$ satisfies~$\Zd$-stationarity~\ref{a.stationarity}, ergodicity~\ref{a.ergodicity}, the minimal moment condition~\ref{a.finite.E}, and the coarse-grained ellipticity condition~\ref{a.ellipticity}. Let~$s$ and~$t$ be as in~\ref{a.ellipticity}. There exists a  matrix~$\ahom\in\mathbb{R}^{d\times d}$ satisfying, for~$\shom := \frac12(\ahom+\ahom^t)$ and~$\khom:=\frac12(\ahom-\ahom^t)$, the bounds
\begin{equation}
\label{e.ahom.bounds}
\mathbb{E}\biggl[\fint_{\cu_0} \s^{-1} \biggr]^{-1}  \leq \shom \leq \shom+ \khom^t \shom^{-1} \khom
\leq 
\mathbb{E} \biggl[ \fint_{\cu_0} ( \s+\k^t\s^{-1}\k ) \biggr]
\end{equation}
such that for any~$n\in \mathbb{Z}$,~$\alpha \in (\max\{s,t\},1)$, and $u\in H^{1+\alpha}(\cu_n)$, if for each~$\ep \in (0,1)$ we let~$u^\epsilon$ denote the unique solution of the Dirichlet problem,
\begin{equation}
\label{e.thm.Dirichlet}
\left\{
\begin{aligned}
& \nabla \cdot \a(\tfrac{\cdot}{\epsilon}) \nabla u^\epsilon = \nabla \cdot  \ahom\nabla u & \mbox{ in } & \cu_n \,, \\
& u^\epsilon = u & \mbox{ on } & \partial \cu_n\,,
\end{aligned}
\right.
\end{equation}
then
\begin{equation}
\mathbb{P}\biggl[ \limsup_{\epsilon\to 0} \bigl(\|\nabla u^\epsilon - \nabla u \|_{H^{-\alpha}(\cu_n)} + \|\a(\tfrac{\cdot}{\epsilon})\nabla u^\epsilon - \ahom \nabla u\|_{H^{-\alpha}(\cu_n)}\bigr) = 0 \biggr] = 1\,.
\label{e.homogenization.limit}
\end{equation}
\end{theorem}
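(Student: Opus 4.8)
The plan is to follow the by-now classical variational/oscillation strategy for qualitative homogenization, adapted to the coarse-grained ellipticity framework of~\cite{AK.HC}, where the usual pointwise ellipticity is replaced by negative-regularity bounds on dyadic scales. First I would construct the corrector. For each direction $e\in\RR^d$, one seeks a stationary field $\nabla\phi_e$ with mean zero such that $-\nabla\cdot\a(e+\nabla\phi_e)=0$ in $\RR^d$; under only integrability plus coarse-grained ellipticity, the corrector gradient need not live in $L^2_{\mathrm{loc}}$, so it must be built in the weighted or coarse-grained Hilbert space used in~\cite{AK.HC}. I would obtain it by a Lax--Milgram argument on the appropriate closed subspace of potential fields, using the coarse-grained ellipticity~\ref{a.ellipticity} to get coercivity of the associated bilinear form at the level of large-scale averages, and~\ref{a.finite.E} to control the fluxes. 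The homogenized matrix is then defined by $\ahom e:=\E[\fint_{\cu_0}\a(e+\nabla\phi_e)]$, and the two-sided bound~\eqref{e.ahom.bounds} follows from the variational characterization of $\ahom$: the upper bound by testing with the trivial corrector (i.e. $\phi_e=0$) after completing the square in $\s,\k$, and the lower bound by the dual variational principle using the flux corrector, exactly as the harmonic/arithmetic mean sandwich in the periodic case.

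Next I would upgrade stationarity of the corrector to sublinearity: $\eps\phi_e(\cdot/\eps)\to 0$ as $\eps\to 0$, together with $\eps\sigma_e(\cdot/\eps)\to 0$ for the flux corrector $\sigma_e$ (the stream-matrix potential for $\a(e+\nabla\phi_e)-\ahom e$). This is where ergodicity enters, via the multiparameter ergodic theorem applied to the stationary gradient fields; the subtlety under coarse-grained ellipticity is that convergence holds only in the negative-regularity norms $H^{-\alpha}$ rather than in $L^2$, which is precisely why the hypothesis $\alpha\in(\max\{s,t\},1)$ appears and why the conclusion~\eqref{e.homogenization.limit} is stated in $H^{-\alpha}(U)$. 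I expect that the needed quantitative ergodic input, and the fact that the coarse-grained norms of the correctors are almost surely finite on every scale, will largely be available from estimates established earlier in the paper (in the sections following~\ref{a.ellipticity}); I would cite those.

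Then comes the homogenization argument proper. Given the target $u\in H^{1+\alpha}(U)$ and the solutions $u^\eps$ of~\eqref{e.thm.Dirichlet}, I would introduce the two-scale expansion $w^\eps(x):=u(x)+\eps\sum_{k=1}^d \phi_{e_k}(x/\eps)\,\partial_k u(x)$ (suitably cut off near $\partial U$ using the $C^{1,1}$-or-convex-Lipschitz regularity of $U$ to control boundary-layer errors), compute $-\nabla\cdot\a(\cdot/\eps)\nabla w^\eps$, and show that the residual converges to $-\nabla\cdot\ahom\nabla u$ in $H^{-1-\alpha}$; the error terms are of two types --- products of sublinear correctors with second derivatives of $u$, and flux-corrector terms $\nabla\cdot(\sigma_e\nabla^2 u)$ --- and each is handled by the sublinearity from the previous step combined with the $H^{1+\alpha}$-regularity of $u$ to absorb one derivative. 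Since $w^\eps=u=u^\eps$ on $\partial U$ (after adjusting for the cutoff), subtracting and testing the equation for $u^\eps-w^\eps$ against itself --- here using the coarse-grained Caccioppoli/energy estimate and the well-posedness theory for~\eqref{e.thm.Dirichlet} developed earlier --- gives $\|\nabla u^\eps-\nabla w^\eps\|_{H^{-\alpha}(U)}\to 0$; combined with $\|\nabla w^\eps-\nabla u\|_{H^{-\alpha}(U)}\to 0$ (again sublinearity) and the analogous statement for the fluxes, one obtains~\eqref{e.homogenization.limit}, first along a deterministic sequence $\eps\to 0$ on a full-measure event and then, since the estimates are monotone-ish in $\eps$ and the correctors' coarse-grained norms are finite, for the full limit.

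The main obstacle I anticipate is the corrector construction and its sublinearity under merely coarse-grained (rather than uniform or $L^p$) ellipticity: one does not have the usual $L^2$ energy space, so both the Lax--Milgram coercivity and the passage from stationarity to sublinearity have to be carried out entirely within the coarse-grained functional-analytic framework of~\cite{AK.HC}, tracking how the exponents $s,t$ from~\ref{a.ellipticity} propagate into the admissible range of $\alpha$ and into the topology of convergence. A secondary but real difficulty is the boundary layer: matching $u^\eps$ and the two-scale expansion on $\partial U$ while keeping the cutoff error small in $H^{-\alpha}$ requires the stated geometric hypotheses on $U$ and a careful choice of the boundary cutoff width relative to $\eps$ and the corrector growth rate.
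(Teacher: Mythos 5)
Your proposal follows the classical corrector/two-scale-expansion route, whereas the paper proceeds along an entirely different path: it never constructs a corrector. Instead it coarse-grains the operator directly via the variational quantities $J(U,p,q)$ and the associated matrices $\s(U),\s_*(U),\k(U)$, applies a subadditive ergodic theorem (Proposition~\ref{p.ergodic.thm}) to get a.s.\ convergence $\bfA(\cu_n)\to\bfAhom$ together with the two envelope matrices $\shom_*\le\shom$ (Proposition~\ref{p.Ahom}), then uses the coarse-grained ellipticity hypothesis~\ref{a.ellipticity} in an oscillation/iteration argument (Propositions~\ref{p.J.bound}--\ref{p.flux.norm} and Proposition~\ref{p.ahom}) to prove $\shom=\shom_*$, and finally proves a purely deterministic error estimate (Theorem~\ref{t.black.box}, via the duality bound Proposition~\ref{p.homogenize}, the flux-mean bound Proposition~\ref{p.flux.mean.bound}, and the energy estimate Proposition~\ref{p.boundary.data}) that controls $\|\nabla u^\eps-\nabla u\|_{H^{-\alpha}}$ and the flux difference directly by $|\bfA(\eps^{-1}U)-\bfAhom|+\mathcal{E}_\alpha(\cu_n)$. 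The theorem then follows by combining this deterministic bound with the a.s.\ convergence of the coarse-grained matrices (Corollary~\ref{c.homogenization}).

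The genuine gap in your proposal is concentrated at the step you yourself flag as the ``main obstacle'': the corrector construction and the identification of $\ahom$ under only coarse-grained ellipticity. Your plan is ``Lax--Milgram on the appropriate closed subspace of potential fields, using~\ref{a.ellipticity} to get coercivity,'' but \ref{a.ellipticity} is an a.s.\ asymptotic statement about $\limsup_n\lambda_s^{-1}(\cu_n)$ and $\limsup_n\Lambda_t(\cu_n)$, not a moment condition, and under~\ref{a.finite.E} alone you only control first moments of $\s^{-1}$ and $\b=\s+\k^t\s^{-1}\k$. There is no natural Hilbert space of stationary potential fields in which the bilinear form $(\nabla\phi,\nabla\psi)\mapsto\E[\fint\nabla\phi\cdot\a\nabla\psi]$ is simultaneously bounded and coercive; indeed the gradient corrector need not even have a well-defined $L^2(\Omega)$ energy, and its flux need not be $L^2$-stationary, so the flux corrector $\sigma_e$ and the identity $\ahom e=\E[\fint\a(e+\nabla\phi_e)]$ are not available as stated. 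What the paper actually proves in place of ``the corrector exists'' is the much more delicate statement $\shom=\shom_*$ (Proposition~\ref{p.ahom}), and this is where the parameters $s,t$ and the exponent budget $s+t<1$ do real work, through the duality splitting $3^{-sn}\|\nabla v(\cu_n)\|_{\H^{-s}}\cdot 3^{-tn}\|\a\nabla v(\cu_n)\|_{\H^{-t}}$ in Proposition~\ref{p.J.bound} and the two weak-norm bounds in Propositions~\ref{p.grad.norm}--\ref{p.flux.norm}. Your sketch does not engage with this mechanism at all; without it, the Dirichlet and Neumann approximations to the corrector (equivalently $\shom$ and $\shom_*$) may fail to agree, and the homogenized matrix is not well-defined. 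If you wanted to salvage the corrector route, you would essentially have to re-derive the convergence of the coarse-grained matrices and the vanishing of the defect $\shom-\shom_*$, at which point you have reproduced the paper's argument and the corrector becomes a byproduct rather than an input.

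A secondary inaccuracy: the constraint $\alpha>\max\{s,t\}$ does not arise because ``convergence of the corrector holds only in $H^{-\alpha}$.'' In the paper it is needed so that the tail of the sum defining $\mathcal{E}_\alpha(\cu_n)$ (small scales $k\le n-l$) is dominated by $3^{-2(\alpha-\max\{s,t\})l}\bigl(\Lambda_t(\cu_n)+\lambda_s^{-1}(\cu_n)\bigr)$ and thus vanishes as $l\to\infty$; the finitely many top scales then go to zero by the ergodic theorem. This is a bookkeeping point about where the exponent budget is spent, but it matters for carrying out the argument correctly.
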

We make three remarks about the theorem. First, here and throughout the paper we use the notation
\begin{equation}
\label{e.notation}
\fint_U f(x)dx \coloneqq \frac{1}{|U|}\int_U f(x)
,dx\,,  \quad \avsum_{a\in A} x_a \coloneqq \frac{1}{|A|}\sum_{a\in A} x_a \,,\quad \mbox{and} \quad\cu_n \coloneqq \biggl( -\frac{3^n}{2},\frac{3^n}{2}\biggr)^d\,,
\end{equation}
and use the Loewner partial ordering on symmetric matrices. Second, Theorem~\ref{t.main} implies homogenization in the~$L^2$ sense, because we can combine the estimate~$\| u^\epsilon -  u \|_{L^{2}(\cu_n)}  \lesssim \|\nabla u^\epsilon - \nabla u\|_{H^{-1}(\cu_n)}$ with~\eqref{e.homogenization.limit} and $\alpha < 1$ to obtain
\begin{equation}
\mathbb{P}\biggl[ \limsup_{\epsilon\to 0} 
\| u^\epsilon -  u \|_{L^{2}(\cu_n)}  = 0 \biggr] = 1\,.
\label{e.homogenization.limit.in.L2}
\end{equation}
Finally, it is a closely related problem to construct a process with generator~$\nabla \cdot \a \nabla$ and prove an invariance principle for this process. In order to do so, it is necessary to place additional mild assumptions on the coefficient field because control over the process on small scales is required. We believe that with some work  it should be possible to construct a Feller process with generator~$\nabla \cdot \a\nabla$, and for a Feller process convergence of the rescaled process to a Brownian motion is equivalent to convergence of the generators~\cite[Theorem 17.25]{Kallenberg}.\footnote{Construction of a Feller process requires~$L^\infty$-type estimates, which are possible in the coarse-grained elliptic framework. In particular, we mention that local boundedness and a Harnack inequality for coarse-grained elliptic coefficient fields are proved in~\cite{AADKM}.} We do not consider these results in this paper and refer the reader instead to similar arguments in~\cite[Section 9]{ABK.SD}, or an alternative approach via Dirichlet forms theory in~\cite{CD2016}.

We now describe our assumptions briefly. Assumption~\ref{a.finite.E} is a minimal integrability condition ensuring that the quantities appearing in~\eqref{e.ahom.bounds} are positive and finite. The coarse-grained ellipticity condition~\ref{a.ellipticity} is the focus of this paper, and we provide an important sufficient condition in Corollary~\ref{c.Besov} below. This is a negative-regularity Besov-type condition which makes explicit our description of scale-discounted spatial averages of the field. We first define, for~$s\in (0,1)$ and~$n\in\mathbb{N}$,
\begin{equation}
\label{e.Binfty}
\|f\|_{\Bring_{\infty,1}^{-2s}(\cu_n)} := \sum_{k=-\infty}^n 3^{2s(k-n)} \max_{z\in 3^k\mathbb{Z}^d \cap \cu_n} \biggl| \fint_{z+\cu_k} f \biggr|\,.
\end{equation}

\begin{corollary}
\label{c.Besov}
Suppose that~$\mathbb{P}$ satisfies~\ref{a.stationarity},~\ref{a.ergodicity},~\ref{a.finite.E} and there exist~$s,t>0$ with~$s+t < 1$ such that
\begin{equation}
\label{e.Binfty.finite}
\mathbb{P}\biggl[\limsup_{n\to\infty } \|\s+\k^t\s^{-1}\k\|_{\Bring_{\infty,1}^{-2t}(\cu_n)} < \infty \quad \mbox{and} \quad \limsup_{n\to \infty } \|\s^{-1}\|_{\Bring_{\infty,1}^{-2s}(\cu_n)} < \infty \biggr] = 1\,.
\end{equation}
Then the conclusion of Theorem~\ref{t.main} holds.
\end{corollary}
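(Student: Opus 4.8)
The plan is to obtain Corollary~\ref{c.Besov} as an immediate consequence of Theorem~\ref{t.main}. Since~\ref{a.stationarity},~\ref{a.ergodicity} and~\ref{a.finite.E} are hypotheses of the corollary, the only thing to verify is that the almost-sure finiteness of the two Besov norms in~\eqref{e.Binfty.finite} implies the coarse-grained ellipticity condition~\ref{a.ellipticity} with the same exponents~$s$ and~$t$; then Theorem~\ref{t.main} applies verbatim. Recalling that~\ref{a.ellipticity} demands a scale-uniform control of the coarse-grained averages of~$\s^{-1}$ at negative regularity~$-2s$ and of~$\s+\k^t\s^{-1}\k$ at negative regularity~$-2t$, this reduces to a single deterministic comparison: the coarse-grained ellipticity functional evaluated on~$\cu_n$ is dominated, up to a dimensional constant, by~$\|\s^{-1}\|_{\Bring_{\infty,1}^{-2s}(\cu_n)}+\|\s+\k^t\s^{-1}\k\|_{\Bring_{\infty,1}^{-2t}(\cu_n)}$.

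To prove this comparison, one observes that the norm~\eqref{e.Binfty} records, at each triadic scale~$3^k$ with~$k\le n$, the \emph{largest} average of~$f$ over the sub-cubes~$z+\cu_k$ tiling~$\cu_n$, and sums these maxima over~$k$ against the geometric weight~$3^{2s(k-n)}\le 1$, which decays at exactly the rate~$-2s$ as the scale shrinks. This is precisely the small-scale discounting built into~\ref{a.ellipticity}, so every ingredient of the coarse-grained functional on~$\cu_n$ — whether it appears as a maximum over sub-cubes or as an~$\underline{L}^2$-type spatial average of~$\s^{-1}$ or~$\s+\k^t\s^{-1}\k$ — is controlled, up to constants independent of~$n$, by the terms already summed in~\eqref{e.Binfty}; for an~$\underline{L}^2$-based formulation one invokes only the elementary embedding obtained by telescoping triadic averages, namely that~$\|{\cdot}\|_{\Bring_{\infty,1}^{-2s}(\cu_n)}$ bounds the corresponding~$\underline{H}^{-s}$-type norm on~$\cu_n$ with an~$n$-independent constant. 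The budget constraint~$s+t<1$ is inherited verbatim, and the non-degeneracy and finiteness clauses of~\ref{a.ellipticity} — that the coarse-grained~$\s^{-1}$ and~$\s+\k^t\s^{-1}\k$ are positive definite and bounded on large scales — follow from~\ref{a.finite.E} together with the ergodic theorem applied to the single terms~$\fint_{\cu_n}\s^{-1}\to\E[\fint_{\cu_0}\s^{-1}]$ and~$\fint_{\cu_n}(\s+\k^t\s^{-1}\k)\to\E[\fint_{\cu_0}(\s+\k^t\s^{-1}\k)]$.

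It then remains to package~\eqref{e.Binfty.finite} into whatever precise form~\ref{a.ellipticity} takes. If~\ref{a.ellipticity} is stated through a~$\limsup_{n\to\infty}$ we are already finished; if instead it asks for a stationary random envelope, one sets
\begin{equation*}
\ellipticity(x) \;:=\; \sup_{n\in\N}\Bigl(\bigl\|\s^{-1}(x+{\cdot})\bigr\|_{\Bring_{\infty,1}^{-2s}(\cu_n)}+\bigl\|(\s+\k^t\s^{-1}\k)(x+{\cdot})\bigr\|_{\Bring_{\infty,1}^{-2t}(\cu_n)}\Bigr),
\end{equation*}
which is stationary by construction; its almost-sure finiteness follows deterministically from~\eqref{e.Binfty.finite} via the near-monotonicity~$\|f\|_{\Bring_{\infty,1}^{-2s}(\cu_{n+1})}\ge 3^{-2s}\|f\|_{\Bring_{\infty,1}^{-2s}(\cu_n)}$, which forces every~$\cu_n$-norm to be finite once the~$\limsup$ is, after which the supremum over the bounded-below index set~$\N$ is finite as well.

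The main obstacle I anticipate is not analytic but one of bookkeeping: one must match the exact definition of the coarse-grained ellipticity functional in~\ref{a.ellipticity} and check that the comparison loses nothing in the scale exponents, so that~$-2s$ and~$-2t$ on the Besov side translate to~$s$ and~$t$ on the ellipticity side with the same~$s+t<1$ budget rather than a strictly worse pair. A related point, should~\ref{a.ellipticity} be formulated through subadditive coarse-grained \emph{energy} quantities rather than through averages of the raw field, is that one first bounds those energies by spatial averages of~$\s^{-1}$ and~$\s+\k^t\s^{-1}\k$ — by testing the relevant cell problems with affine functions — before invoking the Besov comparison; this step is routine but must be spelled out. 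With~\ref{a.ellipticity} in hand, Theorem~\ref{t.main} delivers~\eqref{e.homogenization.limit}.
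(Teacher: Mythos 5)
Your plan matches the paper's proof. The key step you anticipate in your final paragraph — bounding the coarse-grained energy quantities by spatial averages of~$\s^{-1}$ and~$\b = \s+\k^t\s^{-1}\k$ by testing the cell problems with affine data — is exactly what the paper invokes via Proposition~\ref{p.cg.mat}, namely~$\s_*^{-1}(U)\leq\fint_U \s^{-1}$ and~$\b(U)\leq\fint_U\b$ from~\eqref{e.cg.mat.bound}; substituting these into~\eqref{e.lambda.def} and~\eqref{e.Lambda.def} gives the Besov-type norm~\eqref{e.Binfty} verbatim, with no loss in the exponents, so~\eqref{e.Binfty.finite} implies~\ref{a.ellipticity} and Theorem~\ref{t.main} applies. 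Your side remarks about an~$\underline{H}^{-s}$ embedding and a stationary envelope are unnecessary here since~\ref{a.ellipticity} is indeed a~$\limsup$ condition in the maximum over sub-cubes, but they do not affect the correctness of the core argument.
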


The proof of Corollary~\ref{c.Besov} is given in~\eqref{e.Binfty.bound} below. The quantity defined in~\eqref{e.Binfty} is actually an upper bound for the corresponding Besov norm if~$s<\nicefrac12$, and for more detail on Besov norms in triadic cubes we refer to~\cite[Section 2.4]{AK.HC}. Although the norm in~\eqref{e.Binfty} may be unfamiliar, we can recover a more familiar integrability condition by embedding. If~$p > \nicefrac{d}{2t}$ and~$q > \nicefrac{d}{2s}$ then
\begin{equation}
\|\s+\k^t\s^{-1}\k\|_{\Bring_{\infty,1}^{-2t}(\cu_n)} \lesssim \|\s+\k^t\s^{-1}\k\|_{\L^p(\cu_n)} \qquad \mbox{and} \qquad \|\s^{-1}\|_{\Bring_{\infty,1}^{-2s}(\cu_n)} \lesssim \|\s^{-1}\|_{\L^q(\cu_n)}\,,
\end{equation}
and consequently~\ref{a.ellipticity} is satisfied if~$\mathbb{E}[\|\s+\k^t\s^{-1}\k\|^p_{\L^p(\cu_0)}] < \infty$ and~$\mathbb{E}[\|\s^{-1}\|^q_{\L^q(\cu_0)}] < \infty$ with
\begin{equation}
\label{e.moment.condition}
\frac{1}{p} + \frac{1}{q} < \frac{2}{d}\,.
\end{equation}
After stating our assumptions formally in Section~\ref{ss.assumptions}, we give an example of a specific field satisfying the assumptions of Corollary~\ref{c.Besov} and give the short proof of the~$L^p$ condition, which we state formally as Corollary~\ref{c.integrability} for comparison.

Moment conditions have been a focus of the existing literature as a natural extension of classical homogenization results (see~\cite{PV1981,PV1982,Osada,Kozlov79,Kozlov85} and the references therein) to unbounded or non-uniformly elliptic coefficient fields. In the case of a symmetric coefficient field, the model is often placed on the lattice and referred to as the random conductance model. On the other hand, given a uniformly elliptic environment, the skew-symmetric part is the stream matrix for a divergence-free, mean-zero flow. In either of these two cases Corollary~\ref{c.integrability} is not new, and not optimal in terms of integrability (see discussion below). However, other than~\cite{AK.HC} we do not believe that a condition allowing both non-uniformly elliptic~$\s$ and unbounded~$\k$ exists so far in the literature.

\begin{corollary}
\label{c.integrability}
Assume~\ref{a.stationarity} and~\ref{a.ergodicity} and suppose that there exist~$1\leq p,q \leq \infty$ with~$\frac{1}{p} + \frac{1}{q} < \frac{2}{d}$ such that
\begin{equation}
\label{e.integral.ellipticity}
\mathbb{E}\bigl[\|\s+\k^t\s^{-1}\k\|^p_{\L^p(\cu_0)}\bigr] < \infty \quad \mbox{and} \quad \mathbb{E}\bigl[\|\s^{-1}\|^q_{\L^q(\cu_0)}\bigr] < \infty\,.
\end{equation}
Then the conclusion of Theorem~\ref{t.main} holds.
\end{corollary}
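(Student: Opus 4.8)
The plan is to obtain Corollary~\ref{c.integrability} directly from Corollary~\ref{c.Besov}, combining the Besov--Lebesgue embedding displayed just before~\eqref{e.moment.condition} with Birkhoff's ergodic theorem. First I would reduce to the case $p,q<\infty$: if one of the exponents is infinite, replace it by a sufficiently large finite one (possible since the inequality $\frac{1}{p}+\frac{1}{q}<\frac{2}{d}$ is strict), and by Jensen's inequality for the volume-normalized norm $\|\cdot\|_{\L^r(\cu_0)}$ (nondecreasing in $r$) the resulting integrability assumption is implied by~\eqref{e.integral.ellipticity}. I would also observe that the minimal moment condition~\ref{a.finite.E} needed to invoke Corollary~\ref{c.Besov} follows from~\eqref{e.integral.ellipticity}: since $p,q\ge1$, Jensen's inequality yields $\mathbb{E}\bigl[\fint_{\cu_0}|\s+\k^t\s^{-1}\k|\bigr]<\infty$ and $\mathbb{E}\bigl[\fint_{\cu_0}|\s^{-1}|\bigr]<\infty$, so the quantities in~\eqref{e.ahom.bounds} are finite.

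Next I would fix the regularity exponents forced by the moment gap. Since $\frac{d}{2}\bigl(\frac{1}{p}+\frac{1}{q}\bigr)<1$, choose $\delta>0$ small enough that $t:=\frac{d}{2p}+\delta$ and $s:=\frac{d}{2q}+\delta$ satisfy $s+t=\frac{d}{2}\bigl(\frac{1}{p}+\frac{1}{q}\bigr)+2\delta<1$; then also $s,t\in(0,1)$, $p>\frac{d}{2t}$, and $q>\frac{d}{2s}$. For this choice of $(s,t)$ the two embedding inequalities displayed just before~\eqref{e.moment.condition} hold on every triadic cube $\cu_n$, with implicit constants depending only on $d,s,t$ and — crucially — not on $n$, by the scaling invariance of the quantities involved. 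Thus, for every realization of $\a$ and every $n\in\mathbb{N}$,
\begin{align*}
\|\s+\k^t\s^{-1}\k\|_{\Bring_{\infty,1}^{-2t}(\cu_n)}&\lesssim\Bigl(\fint_{\cu_n}|\s+\k^t\s^{-1}\k|^p\Bigr)^{\!1/p},\\
\|\s^{-1}\|_{\Bring_{\infty,1}^{-2s}(\cu_n)}&\lesssim\Bigl(\fint_{\cu_n}|\s^{-1}|^q\Bigr)^{\!1/q}.
\end{align*}

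Then I would invoke the $\Zd$-ergodic theorem. Because $3^n$ is odd, $\cu_n$ is exactly tiled by the $3^{nd}$ unit cells $z+\cu_0$ with $z\in\Zd\cap\cu_n$, so $\fint_{\cu_n}|\s+\k^t\s^{-1}\k|^p=\avsum_{z\in\Zd\cap\cu_n}\fint_{z+\cu_0}|\s+\k^t\s^{-1}\k|^p$, and the summand is a stationary $\Zd$-indexed random variable with finite mean $\mathbb{E}\bigl[\fint_{\cu_0}|\s+\k^t\s^{-1}\k|^p\bigr]<\infty$ by~\eqref{e.integral.ellipticity}. Hence the multiparameter pointwise ergodic theorem and ergodicity~\ref{a.ergodicity} give, $\mathbb{P}$-almost surely,
\begin{align*}
\lim_{n\to\infty}\fint_{\cu_n}|\s+\k^t\s^{-1}\k|^p&=\mathbb{E}\Bigl[\fint_{\cu_0}|\s+\k^t\s^{-1}\k|^p\Bigr]<\infty,\\
\lim_{n\to\infty}\fint_{\cu_n}|\s^{-1}|^q&=\mathbb{E}\Bigl[\fint_{\cu_0}|\s^{-1}|^q\Bigr]<\infty.
\end{align*}
Combined with the embedding display above, this shows that $\limsup_{n\to\infty}\|\s+\k^t\s^{-1}\k\|_{\Bring_{\infty,1}^{-2t}(\cu_n)}<\infty$ and $\limsup_{n\to\infty}\|\s^{-1}\|_{\Bring_{\infty,1}^{-2s}(\cu_n)}<\infty$, both $\mathbb{P}$-almost surely, which is exactly condition~\eqref{e.Binfty.finite} of Corollary~\ref{c.Besov} for the pair $(s,t)$ fixed above. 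Applying Corollary~\ref{c.Besov} then yields the conclusion of Theorem~\ref{t.main}, which is the claim of Corollary~\ref{c.integrability}.

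I do not expect a genuine obstacle: this is essentially bookkeeping, since the analytically substantial inputs — the Besov--Lebesgue embedding, and above all Theorem~\ref{t.main} together with its Besov-form Corollary~\ref{c.Besov} — are already available from the earlier parts of the paper. The one point that warrants a little care is simply confirming that admissible exponents exist at all, i.e.\ that one can take $s,t>0$ with $s+t<1$, $p>\frac{d}{2t}$ and $q>\frac{d}{2s}$ — and this is precisely what the gap $\frac{1}{p}+\frac{1}{q}<\frac{2}{d}$ guarantees, which is why the same gap reappears in~\eqref{e.moment.condition}.
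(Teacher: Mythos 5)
Your proposal is correct and follows essentially the same approach as the paper's proof: both use the Besov--Lebesgue embedding (the $\ell^\infty$-to-$\ell^p$ trick that produces the constant $\frac{1-3^{-2t}}{1-3^{d/p-2t}}$) together with the pointwise $\Zd$-ergodic theorem applied to the stationary random variables $\fint_{z+\cu_0}|\b|^p$ and $\fint_{z+\cu_0}|\s^{-1}|^q$, and both pick $s,t$ from the strict gap $\frac1p+\frac1q<\frac2d$. The only cosmetic difference is that you route through Corollary~\ref{c.Besov} while the paper verifies~\ref{a.finite.E} and~\ref{a.ellipticity} directly and then cites Theorem~\ref{t.main}; you are in fact a bit more careful in spelling out the reduction to finite $p,q$ and the Jensen step establishing~\ref{a.finite.E}, which the paper passes over silently.
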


Moment conditions on the coefficient field have a long history. Assuming a uniformly elliptic symmetric part, Oelschl\"ager~\cite{Oelschlager} proved an invariance principle assuming an~$L^2$-integrable stream matrix satisfying pathwise regularity and growth conditions. Avellaneda and Majda~\cite{AM1991} proved homogenization of the parabolic problem assuming~$L^p$-integrability of the stream matrix for~$p \geq \max\{d,2+\delta\}$. Assuming mainly~$p>d$ integrability and pathwise~$C^2$-smoothness of the stream matrix, Fannjiang and Komorowski~\cite{FK1997} then proved a quenched invariance principle for the associated process. It is natural to consider the case~$p=2$ because only the second moment of~$\k$ appears explicitly in bounding the homogenized matrix in~\eqref{e.ahom.bounds}, and under this assumption Kozma and T{\'o}th~\cite{KT2017} proved an invariance principle for the process in probability with respect to the environment. T{\'o}th~\cite{Toth2018} then proved a quenched central limit theorem assuming~$L^{2+\delta}$-integrability of the stream matrix. Recently, Fehrman~\cite{Ben2023} used PDE-based methods to prove quenched homogenization and large-scale regularity under the~$p \geq \max\{d,2+\delta\}$ assumption, as well as quenched homogenization under the~$L^2$-integrability assumption.

Applying Corollary~\ref{c.integrability} with~$q=\infty$ we obtain quenched homogenization of solutions to the Dirichlet problem assuming~$L^p$-integrability of the stream matrix for~$p > d$. As noted in~\cite{Ben2023}, this is the threshold which implies convergence of the two-scale expansion and large-scale regularity. Since large-scale regularity results can be obtained in the coarse-grained elliptic framework (for example see~\cite[Section 5.2]{AK.HC}), we believe it is consistent that our results require the~$p>d$ assumption. However, we emphasize that we do not recover the homogenization results of~\cite{KT2017,Toth2018,Ben2023} which require only~$p = 2$ or~$2+\delta$.

For the random conductance model in two dimensions, Biskup~\cite{Biskup2011} proved a quenched invariance principle for the process assuming only finite expectation of~$\s$ and~$\s^{-1}$, which is optimal. For~$d\geq 2$, Andres, Deuschel and Slowik~\cite{ADS2015} proved a quenched invariance principle under the assumptions~$\s\in L^p$ and~$\s^{-1}\in L^q$ with~$p$ and~$q$ satisfying~\eqref{e.moment.condition}. Chiarini and Deuschel~\cite{CD2016} proved the same result in the continuum, defining the process by Dirichlet forms theory. Bella and Sch\"affner~\cite{BellaSchaf2020} later improved the result for~$d\geq 3$ in the discrete setting by proving a quenched invariance principle for the process with
\begin{equation*}
\frac{1}{p} + \frac{1}{q} < \frac{2}{d-1}\,,
\end{equation*}
which is optimal for~$L^\infty$-sublinearity of the corrector. The conclusion of Corollary~\ref{c.integrability} is strictly weaker than the results of~\cite{BellaSchaf2020} and~\cite{Biskup2011}. However, we note that the goal of this paper is in some sense parallel: instead of optimizing in integrability we provide here an alternative criterion for homogenization, and recover an integrability criterion as a consequence.

The notion of coarse-grained ellipticity which we use in this paper was first introduced by Armstrong and Kuusi~\cite{AK.HC}. Although they proved quantitative homogenization bounds under quantitative assumptions, the ellipticity assumption in that paper is closely related to~\ref{a.ellipticity}. However, we note that while we allow the parameters~$s+t < 1$ in~\ref{a.ellipticity}, the assumptions in~\cite{AK.HC} restrict to the case~$s=t < \nicefrac12$. Our methods are an adaptation of those of~\cite{AK.HC}, but our arguments are considerably simpler.

In the rest of this introduction we state the assumptions precisely and prove Corollary~\ref{c.Besov} and Corollary~\ref{c.integrability}. In Section~\ref{s.coarse.graining} we define the coarse-grained matrices and make precise the sense in which we solve the Dirichlet problem in~\eqref{e.thm.Dirichlet}. In Section~\ref{s.ellip.converge} we prove that the coarse-grained matrices converge, and use this to prove our main theorem in Section~\ref{s.homogenize}.

\subsection{Statement of assumptions and examples}
\label{ss.assumptions}

Our probability space is the set of coefficient fields for which the equation~\eqref{e.PDE} is well-posed. Given~$\a = \s+\k$, where~$\s$ is symmetric and~$\k$ is skew-symmetric, we assume that the symmetric part satisfies
\begin{equation}
\label{e.sym.assumption}
\s,\s^{-1}\in L^1_{\mathrm{loc}}(\mathbb{R}^d;\mathbb{R}^{d\times d})\,,
\end{equation}
and the skew-symmetric part satisfies either
\begin{equation}
\label{e.skew.assumption}
\s^{-\nicefrac12}\k\s^{-\nicefrac12} \in L^\infty_{\mathrm{loc}}(\mathbb{R}^d;\mathbb{R}^{d\times d}) \qquad \mbox{or} \qquad \s^{-\nicefrac12}\k\,, \, \s^{-\nicefrac12}\nabla \cdot \k \in L^2_{\mathrm{loc}}(\mathbb{R}^d;\mathbb{R}^{d\times d})\,.
\end{equation}
We define the space
\begin{equation}
\Omega \coloneqq \bigl\{ \a:\mathbb{R}^d \to \mathbb{R}^{d\times d} \quad \mbox{satisfying } \eqref{e.sym.assumption} \mbox{ and } \eqref{e.skew.assumption} \bigr\}\,,
\label{e.Omega.def}
\end{equation}
and equip~$\Omega$ with a~$\sigma$-field by defining, for every Borel subset~$U \subseteq \mathbb{R}^d$, the~$\sigma$-field~$\mathcal{F}(U)$ generated by the family of random variables
\begin{equation*}
\a \mapsto \int_U e' \cdot \a(x) e \varphi (x) \, dx\,, \quad e,e'\in\mathbb{R}^d, \varphi \in C^\infty_c(U)\,.
\end{equation*}
We also set~$\mathcal{F}\coloneqq \mathcal{F}(\mathbb{R}^d)$. The group of~$\mathbb{R}^d$ translations is denoted~$\{T_z:z\in\mathbb{R}^d\}$, where~$T_z:\Omega \to \Omega$ acts by~$T_z : \a(\cdot) \to \a(\cdot + z)$. 
It is convenient to define
\begin{equation}
\label{e.b.def}
\b\coloneqq \s + \k^t\s^{-1}\k
\end{equation}
since~$\b$ plays the role of an ellipticity upper bound, while~$\s^{-1}$ plays the role of an ellipticity lower bound. For parameters~$s,t \in (0,1)$ and~$n\in\mathbb{N}$ we define the coarse-grained ellipticity constants in~$\cu_n$ (recall the notation from~\eqref{e.notation}) by
\begin{align}
\label{e.lambda.def}
\lambda_{s}(\cu_n) & \coloneqq \left((1-3^{-2s})\sum_{k=-\infty}^{n} 3^{2s(k-n)} \max_{z\in 3^{k}\mathbb{Z}^d \cap \cu_n} |\s_*^{-1}(z+\cu_k) |\right)^{-1}
\end{align}
and
\begin{align}
\label{e.Lambda.def}
\Lambda_{t}(\cu_n) & \coloneqq (1-3^{-2t})\sum_{k=-\infty}^{n} 3^{2t(k-n)} \max_{z\in 3^{k}\mathbb{Z}^d \cap \cu_n} |\b(z+\cu_k)| \,.
\end{align}
In the definitions above we take the matrix norm to be the spectral norm. The normalizing constants in front are chosen so that~$(1-3^{-2t})\sum_{k=-\infty}^n 3^{2t(k-n)} = 1$, which implies by Proposition~\ref{p.cg.mat} that
\begin{equation}
\|\s^{-1}\|_{L^\infty(\cu_n)}^{-1} \leq \lambda_s(\cu_n)  \quad \mbox{and} \quad \Lambda_t(\cu_n) \leq \|\b\|_{L^\infty(\cu_n)}\,.
\end{equation}
It is conceptually consistent that the coarse-grained ellipticity constants are bounded by the uniform ellipticity constants, but in calculations it is much easier to work without the normalizing constants. For this reason we will throughout the paper ignore the normalizing constants in~\eqref{e.lambda.def} and~\eqref{e.Lambda.def} and let most constants depend on~$s$ and~$t$. Finally, we extend the definitions~\eqref{e.lambda.def} and~\eqref{e.Lambda.def} to domains~$\cu_n$ with~$n\in\mathbb{R}$ in the natural way by taking a triadic decomposition at scales~$k$ such that~$3^k = 3^{n+j}$ for~$j \in (-\infty, 0] \cap \mathbb{Z}$.

We can now state our assumptions fully. We assume that the law of the coefficient field~$\a$ is given by a probability measure~$\mathbb{P}$ on~$(\Omega,\mathcal{F})$ which satisfies:

\begin{enumerate}[label=(\textrm{P\arabic*})]
\setcounter{enumi}{0}
\item \emph{Stationarity with respect to~$\mathbb{Z}^d$ translations:}
\label{a.stationarity}
\begin{equation*}
\mathbb{P} = \mathbb{P} \circ T_z\,, \forall z\in\mathbb{Z}^d\,.
\end{equation*}
\end{enumerate}

\begin{enumerate}[label=(\textrm{P\arabic*})]
\setcounter{enumi}{1}
\item \emph{Ergodicity with respect to~$\mathbb{Z}^d$ translations:}
\label{a.ergodicity}
For all~$A\in\mathcal{F}$, if~$A = \bigcap_{z\in \mathbb{Z}^d}T_zA$ then~$\mathbb{P}[A] \in \{0,1\}$.
\end{enumerate}

\begin{enumerate}[label=(\textrm{P\arabic*})]
\setcounter{enumi}{2}
\item \emph{Finite first moments:} We have
\begin{equation}
\label{e.in.L1}
 \mathbb{E}\biggl[ \fint_{\cu_0}\s^{-1}\biggr] < \infty \quad \mbox{and} \quad \mathbb{E}\biggl[ \fint_{\cu_0} \s + \k^t\s^{-1}\k\biggr] < \infty\,.
\end{equation}
\label{a.finite.E}
\end{enumerate}
\begin{enumerate}[label=(\textrm{P\arabic*})]
\setcounter{enumi}{3}
\item \emph{Finite coarse-grained ellipticity:}
There exist~$s,t>0$ with~$s+t<1$ such that 
\begin{equation}
\label{e.lambdas.finite}
\P \biggl[ 
\limsup_{n\to\infty} 
\lambda^{-1}_{s}(\cu_n) < \infty 
\quad\mbox{and} \quad
\limsup_{n\to\infty} 
\Lambda_{t}(\cu_n) < \infty
\biggr] 
= 1\,.
\end{equation}
\label{a.ellipticity}
\end{enumerate}

We will illustrate the coarse-grained ellipticity assumption by proving Corollary~\ref{c.Besov} and Corollary~\ref{c.integrability}, and giving an example of a field satisfying the Besov-type condition but not the~$L^p$ condition. 

We first give the short proof of Corollary~\ref{c.Besov}. By Proposition~\ref{p.cg.mat} we have, for any domain~$U$, that~$\b(U) \leq \fint_U \b$ and~$\s_*^{-1}(U) \leq \fint_U \s^{-1}$. Combining this with the definition of the coarse-grained ellipticity in~\eqref{e.Lambda.def} we obtain
\begin{equation}
\label{e.Binfty.bound}
\Lambda_t(\cu_n) \leq (1-3^{-2t})\sum_{k=-\infty}^n 3^{2t(k-n)} \max_{z\in 3^k\mathbb{Z}^d \cap \cu_n} \biggl| \fint_{z+\cu_k}\b\biggr|\,,
\end{equation}
and similarly for~$\lambda_s^{-1}(\cu_n)$. The right-hand side of~\eqref{e.Binfty.bound} is just the norm in~\eqref{e.Binfty}, so~\eqref{e.Binfty.finite} immediately implies~\ref{a.ellipticity}.

To prove Corollary~\ref{c.integrability} we first take~\eqref{e.Binfty.bound}, control the maximum over sub-cubes ($\ell^\infty$ norm) by the~$\ell^p$ norm for~$p > \nicefrac{d}{2t}$, correct for the averaging factor in the sum, and bound the spatial average by the~$L^p$ norm to get
\begin{equation}
\label{e.how.to.bound}
\Lambda_{t}(\cu_n) \leq (1-3^{-2t})\sum_{k=-\infty}^{n} 3^{(2t-\nicefrac{d}{p})(k-n)} \biggl(\avsum_{z\in 3^{k}\mathbb{Z}^d \cap \cu_n} \biggl|\fint_{z+\cu_k} \b \biggr|^p \biggr)^{\nicefrac{1}{p}} \leq \frac{1-3^{-2t}}{1-3^{\nicefrac{d}{p}-2t}} \|\b\|_{\L^p(\cu_n)}\,.
\end{equation}
By the same reasoning, if~$q > \nicefrac{d}{2s}$ we can bound
\begin{equation}
\lambda_s^{-1}(\cu_n) \leq \frac{1-3^{-2s}}{1-3^{\nicefrac{d}{q}-2s}} \|\s^{-1}\|_{\L^q(\cu_n)}\,.
\end{equation}
Now by the ergodic theorem
\begin{equation}
\label{e.Lp.finite}
\lim_{n\to\infty}\|\b\|^p_{\L^p(\cu_n)} = \mathbb{E}\bigl[\|\b\|_{\L^p(\cu_0)}^p \bigr] \quad \mbox{and} \quad \lim_{n\to\infty}\|\s^{-1}\|^q_{\L^q(\cu_n)} = \mathbb{E}\bigl[\|\s^{-1}\|_{\L^q(\cu_0)}^q \bigr]
\end{equation}
when the expectations are finite. If~$\nicefrac{1}{p}+\nicefrac{1}{q} < \nicefrac{2}{d}$ then there exist~$s,t>0$ such that~$s+t<1$, $p> \nicefrac{d}{2t}$ and~$q>\nicefrac{d}{2s}$. This implies that~\ref{a.finite.E} and~\ref{a.ellipticity} are satisfied if the expectations on the right-hand side of~\eqref{e.Lp.finite} are finite, which proves Corollary~\ref{c.integrability}.

The Besov-type condition of Corollary~\ref{c.Besov} is satisfied for many fields which may not satisfy the assumptions of Corollary~\ref{c.integrability}. We construct such a field here, based on the idea of iterated multiplication of independent fields, as in~\cite{KahanePeyriere1976, Kahane1985}. The construction we present can be generalized to Gaussian fields that admit a similar type of scale decomposition and are not piece-wise constant.

Suppose that for each~$j\in \mathbb{N}$ we let~$\varphi_j(\cdot)$ be a~$\mathbb{Z}^d$ stationary, Gaussian field that is piecewise constant on the cubes~$z+\cu_j$ for~$z\in 3^j\mathbb{Z}^d$, and on each cube is a centred Gaussian random variable with variance~$\sigma^2$. Suppose that the~$\varphi_j$ are independent, and let~$W_j(\cdot) = e^{\varphi_j(\cdot) - \nicefrac{\sigma^2}{2}}$ so that for all~$x$,~$\mathbb{E}[W_j(x)] = 1$ and~$W_j$ is a positive random field. If we define
\begin{equation*}
f_m(\cdot) = \prod_{j=1}^m W_j(\cdot)\,,
\end{equation*}
then by independence
\begin{equation*}
\mathbb{E}\bigl[|f_m(\cdot)|^p\bigr] = \prod_{j=1}^m \mathbb{E}[ |W_j(\cdot)|^p] = \exp\Bigl( \frac {mp(p-1)\sigma^2}{2} \Bigr)\,,
\end{equation*}
so that the~$L^p$ norm of~$f_m(\cdot)$ grows exponentially in~$m$ for any~$p > 1$. However, we do have a uniform in~$m$ control over~$f_m(\cdot)$ in the Besov-type norm~\eqref{e.Binfty}. These two facts allow us to construct a field which is bounded in the Besov-type norm but not in~$L^p$ for~$p>1$, by defining
\begin{equation}
\label{e.our.field}
f(\cdot) = \sum_{m=1}^\infty \frac{f_m(\cdot)}{m^3}\,,
\end{equation}
which converges in~$L^1(\Omega; L^1_{\mathrm{loc}}(\mathbb{R}^d))$ and is a~$\mathbb{Z}^d$-stationary, ergodic random field.

\begin{example}
\label{ex.field}
If
\begin{equation}
\label{e.regularity.condition}
\sigma < \min\bigg\{1, \sqrt{\frac{2}{d}}t \bigg\}
\end{equation}
then
\begin{equation}
\label{e.field.in.Besov}
\mathbb{P}\biggl[ \limsup_{n\to\infty} \|f\|_{\Bring_{\infty,1}^{-2t}(\cu_n)} < \infty \biggr] = 1\,.
\end{equation}
\end{example}
We prove this result in Appendix~\ref{appendix.ex}, and note that we have made no attempt to optimize the choice of parameters. Since~$\sigma$ and~$t$ can be chosen to satisfy~\eqref{e.regularity.condition} and~$t<1$, if we let~$s = \frac{1-t}{2}$ and define
\begin{equation}
\a(\cdot) := (1+f(\cdot))\mathrm{I}_d
\end{equation}
then we obtain a field satisfying the assumptions~\ref{a.finite.E} and~\ref{a.ellipticity} which is not in~$L^p(\Omega, L^p_{\mathrm{loc}}(\mathbb{R}^d))$ for any~$p >1$.

\section{Coarse-grained matrices}
\label{s.coarse.graining}

\subsection{Definition of the coarse-grained matrices}
\label{ss.cg.defs}
In this section we prove that under the assumption~$\a\in \Omega$ the PDE~\eqref{e.PDE} is well-posed and use this to define the coarse-grained coefficients. The arguments in this section are entirely deterministic. We consider coefficient fields~$\a :\mathbb{R}^d \to \mathbb{R}^{d\times d}$ with symmetric part and skew-symmetric part given by
\begin{equation}
\label{e.def.sk}
\s(x) \coloneqq \frac{\a(x) + \a^t(x)}{2} \quad \mbox{and} \quad \k(x) \coloneqq \frac{\a(x) - \a^t(x)}{2}\,,
\end{equation}
where~$A^t$ denotes the transpose of a matrix~$A$. Since~$\a$ is elliptic, the symmetric part~$\s$ is invertible. Proceeding as in~\cite{AK.HC} our goal is to define, for bounded Lipschitz domains~$U$ and~$p,q\in\mathbb{R}^d$,
\begin{equation*}
J(U,p,q) = \sup_{-\nabla \cdot \a\nabla u = 0} \fint_U
\Bigl( 
-\frac{1}{2}\nabla u \cdot \s \nabla u - p \cdot \a\nabla u + q\cdot \nabla u
\Bigr)
\,.
\end{equation*}
In order to do so we need to make sense of the PDE~\eqref{e.PDE} under the assumption~$\a\in\Omega$. We begin by introducing the weighted Sobolev space~$H^1_{\s}(U)$ to be the set of weakly differentiable functions such that
\begin{equation}
\label{e.H1s.def}
\|u\|_{H^1_{\s}(U)} \coloneqq \biggl(\biggl(\int_U |u| \biggr)^2 + \int_U \nabla u \cdot \s\nabla u \biggr)^{\nicefrac12} < \infty \,.
\end{equation}
This defines a Banach space (see~\cite{KO84}) in which smooth functions are dense, and there is a linear, bounded trace operator~$\mathrm{Tr}: H^1_{\s}(U) \to L^1(\partial U)$ defined by a straightforward adaptation of the standard proof, for instance as in~\cite[Chapter 5.5]{Evans}. We can then define~$H^1_{\s,0}(U)$ to be the subspace of trace-zero functions, which is a Hilbert space under the norm
\begin{equation}
\|u\|_{\dot{H}_{\s,0}^1(U)} = \biggl( \int_U \nabla u \cdot \s \nabla u\biggr)^{\nicefrac12}\,.
\end{equation}
Again by modifying the standard argument we also see that~$H^1_{\s,0}(U)$ is equal to the closure of~$C_c^\infty(U)$ under the~$H^1_{\s}(U)$ norm. We define~$H^{-1}_{\s}(U)$ as the dual of~$H^1_{\s,0}(U)$ with norm
\begin{equation}
\label{e.Hdual.def}
\|f\|_{H^{-1}_{\s}(U)} \coloneqq \sup \bigl\{\langle f,g\rangle : \|g\|_{H^1_{\s,0}(U)} \leq 1 \bigr\}\,.
\end{equation}
Applying the Riesz representation theorem, for any~$f\in H^{-1}_{\s}(U)$ the problem
\begin{equation}
\label{e.sym.Dirichlet}
\begin{cases}
-\nabla \cdot \s\nabla u = f & \mbox{in } U\,,\\
u = 0 & \mbox{on } \partial U\,,
\end{cases}
\end{equation}
then has a unique solution~$u\in H^1_{\s,0}(U)$, which allows us to characterize the dual space by
\begin{equation}
\label{e.characterize.Hdual}
H^{-1}_{\s}(U) = \{ \nabla \cdot \s^{\nicefrac12}\mathbf{f} : \mathbf{f} \in L^2(U)^d\}\,.
\end{equation}

If we consider the full problem
\begin{equation}
\label{e.Dirichlet}
\begin{cases}
-\nabla \cdot (\s+\k)\nabla u = \nabla \cdot \s^{\nicefrac12}\mathbf{f} & \mbox{in } U\,, \\
u = 0 & \mbox{on } \partial U\,,
\end{cases}
\end{equation}
then we obtain the existence of a solution~$u\in H^1_{\s,0}(U)$ for any~$\mathbf{f}\in L^2(U)^d$ by the Lions-Lax-Milgram lemma~\cite[Theorem 2.1]{Showalter}. We apply the theorem to the bilinear form
\begin{equation*}
B : \dot{H}_{\s,0}^1(U) \times V \to \mathbb{R} \quad \mbox{given by} \quad B[u,\varphi] = \int_U \nabla u \cdot \a \nabla \varphi\,,
\end{equation*}
where the normed vector space~$V$ is the set of functions~$\varphi \in C_c^\infty$, equipped with the~$\dot{H}_{\s,0}^1(U)$ norm. We have two possible conditions on the skew-symmetric part~$\k$, both of which allow us to prove uniqueness of solutions. If~$\s^{-\nicefrac12}\k\s^{-\nicefrac12}\in L^\infty_{\mathrm{loc}}$ then we can test the equation with the solution, in which case the basic energy estimate implies uniqueness. In the case that~$\s^{-\nicefrac12}\k,\s^{-\nicefrac12}\nabla \cdot \k \in L^2(U)$ we argue as in~\cite[Proposition 2.4]{Ben2023}, treating the skew-symmetric matrix~$\k$ as the stream matrix for an~$L^2$ integrable drift term. Using the skew-symmetry of~$\k$, for any~$\varphi \in C_c^\infty(U)$
\begin{equation*}
\int_U \nabla u \cdot \k \nabla \varphi = -\int_U \s^{-\nicefrac12}(\nabla \cdot \k) \cdot \s^{\nicefrac12}\nabla u \, \varphi\,,
\end{equation*}
so we can take~$\varphi$ to be mollifications of~$u_n \coloneqq (u\wedge n) \vee (-n)$ and pass to the limit by the dominated convergence theorem. This implies that if~$u$ solves~\eqref{e.Dirichlet} with~$\mathbf{f}=0$ then for all~$n$
\begin{equation*}
\fint_U \nabla u \cdot \s \nabla u_n = \fint_U \s^{-\nicefrac12}(\nabla \cdot \k) \cdot \nabla u \, u_n = \fint_U \s^{-\nicefrac12}(\nabla \cdot \k) \cdot \s^{\nicefrac12}\nabla (uu_n - \frac{1}{2}u_n^2) = 0\,.
\end{equation*}
Passing to the limit~$n\to\infty$ we conclude that~$u = 0$.

Finally, we note that we can solve the Dirichlet problem with boundary condition~$g\in W^{1,\infty}(U)$ and zero right-hand side by solving~\eqref{e.Dirichlet} with right-hand side~$\nabla \cdot \a\nabla g \in H^{-1}_{\s}(U)$ to obtain a function~$u$ such that~$-\nabla \cdot \a\nabla (u+g) = 0$. We remark that implicit in the above discussion are constants that depend on the norms of our coefficient field~$\a$. Although we need this qualitative framework to define the coarse-grained coefficients, in Section~\ref{ss.cg.analysis} we show that this dependence can be made explicit and depends only on the coarse-grained ellipticity constants~$\lambda_s(\cu_n)$ and~$\Lambda_t(\cu_n)$.

\smallskip

The above discussion shows that the Dirichlet problem is well-posed, and we note that the Neumann problem is well-posed by similar arguments. We now return to our definition of~$J(U,p,q)$. We will maximize over the (non-empty) solution space
\begin{equation}
\label{e.mathcalA.def}
\mathcal{A}(U) \coloneqq \bigl\{ u \in H^1_{\s}(U) : \nabla \cdot \a \nabla u = 0\,, (u)_U = 0\bigr\}\,,
\end{equation}
where here and throughout the paper we define~$(u)_U = \fint_U u$.
The space~$\mathcal{A}(U)$ is closed under the~$\dot{H}^1_{\s}(U)$ norm, so for any~$p,q\in\mathbb{R}^d$ and bounded Lipschitz domain~$U$ we apply~\cite[Chapter II.1]{Temam} to conclude that the functional
\begin{equation}
\label{e.J.def}
J(U,p,q) \coloneqq \sup_{u\in\mathcal{A}(U)} \fint_U \biggl( -\frac{1}{2}\nabla u \cdot \s \nabla u - p \cdot \a\nabla u + q\cdot \nabla u \biggr)
\end{equation}
is well-defined and has a unique maximizer~$v(U,p,q)$. Given the well-posedness of the variational problem in~\eqref{e.J.def}, standard arguments (see \cite{AK.Book}) imply that the coarse-grained coefficients~$\s(U),\s_*^{-1}(U)$ and~$\k(U)$ can be defined such that
\begin{equation}
\label{e.Jmat}
J(U,p,q) = \frac{1}{2}p \cdot \s(U) p + \frac{1}{2}(q+\k(U)p)\cdot \s_*^{-1}(U)(q + \k(U)p) - p\cdot q\,.
\end{equation}
We define the double-variable matrix
\begin{equation}
\label{e.bfA.def}
\bfA(U) \coloneqq \begin{pmatrix}
\s(U) + \k^t(U)\s_*^{-1}(U) \k(U) & -\k^t(U)\s_*^{-1}(U) \\
-\s_*^{-1}(U)\k(U) & \s_*^{-1}(U)\,,
\end{pmatrix}
\end{equation}
and
\begin{equation}
\label{e.bU.def}
\b(U) \coloneqq \s(U) + \k^t(U)\s_*^{-1}(U)\k(U)\,.
\end{equation}
The double-variable matrix should be viewed as a coarse-graining of
\begin{equation}
\label{e.A.def}
\bfA : = \begin{pmatrix}
\s + \k^t\s^{-1}\k & -\k\s^{-1} \\ -\s^{-1}\k & \s^{-1}
\end{pmatrix}\,.
\end{equation}
To summarize the above discussion we state it formally as a proposition and include the standard coarse-graining properties for future reference.

\begin{proposition}
\label{p.cg.mat}
Suppose that~$\a\in\Omega$. Then for any bounded Lipschitz domain~$U$ and~$p,q\in\mathbb{R}^d$,~$J(U,p,q)$ in~\eqref{e.J.def} is well-defined, the coarse-grained matrices~$\s(U),\s_*^{-1}(U)$ and~$\k(U)$ are defined and satisfy~\eqref{e.Jmat}, and the following holds:
\begin{itemize}
\item[•] The mean-zero maximizer~$v(U,p,q)$ in~\eqref{e.J.def} exists, is unique, and
\begin{equation}
\label{e.J.energy}
J(U,p,q) = \frac{1}{2}\fint_U \nabla v(U,p,q)\cdot \s \nabla v(U,p,q)\,.
\end{equation}
\item[•]
The coarse-grained matrices satisfy the ordering
\begin{equation}
\label{e.cg.mat.bound}
\biggl(\fint_U \s^{-1}\biggr)^{-1} \leq \s_*(U) \leq \s(U) \leq  \b(U) \leq \fint_U (\s+\k^t\s^{-1}\k)\,.
\end{equation}
\item[•] For any~$w\in \mathcal{A}(U)$ we have the coarse-graining inequalities
\begin{align}
\label{e.cg.ineq}
\left\{
\begin{aligned}
& \biggl( \fint_U \nabla w\biggr) \cdot \s_*(U) \biggl( \fint_U \nabla w\biggr)  \leq \fint_U \nabla w \cdot \s \nabla w\,, \\
& \biggl( \fint_U \a\nabla w\biggr) \cdot \b^{-1}(U) \biggl( \fint_U \a\nabla w\biggr)  \leq \fint_U \nabla w \cdot \s \nabla w \,. \\
\end{aligned}
\right.
\end{align}
and
\begin{equation}
\label{e.cg.diff}
\biggl| \fint_U \bigl( p \cdot \a\nabla w - q\cdot \nabla w\bigr)\biggr| = \biggl| \fint_U \nabla w \cdot \s \nabla v(U,p,q)\biggr| \leq (2J(U,p,q))^{\nicefrac12} \biggl( \fint_U \nabla w \cdot \s \nabla w\biggr)^{\nicefrac12}\,.
\end{equation}
\item[•] For any~$w\in\mathcal{A}(U)$,
\begin{align}
\label{e.quad.response}
J(U,p,q) - \fint_U & \biggl( -\frac{1}{2}\nabla w \cdot \s \nabla w - p \cdot \a\nabla w + q\cdot \nabla w \biggr) \nonumber \\
& = \fint_U \frac{1}{2}\bigl(\nabla v(U,p,q) - \nabla w\bigr)\cdot \s \bigl( \nabla v(U,p,q)-\nabla w\bigr) \,.
\end{align}
\item[•] Averages of the maximizers are given explicitly by
\begin{equation}
\label{e.averages}
\left\{
\begin{aligned}
&\fint_U \nabla v(U,p,q) = -p + \s_*^{-1}(U)(q+\k(U)p)\,, \\
& \fint_U \a\nabla v(U,p,q) = (\Id - \k^t(U)\s_*^{-1}(U))q - \b(U)p\,. \\
\end{aligned}
\right.
\end{equation}
\item[•] In terms of the double-variable matrix,
\begin{equation}
\label{e.J.bigA}
J(U,p,q) = \frac{1}{2}\begin{pmatrix}
-p \\ q
\end{pmatrix}
\cdot \bfA(U) \begin{pmatrix}
-p \\ q
\end{pmatrix}
-p\cdot q\,.
\end{equation}
\end{itemize}

\end{proposition}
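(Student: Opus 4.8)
The plan is to reduce every assertion to the Euler--Lagrange characterization of the maximizer $v(U,p,q)$, together with two elementary tools: pointwise completion of the square, and the Legendre transform of a positive quadratic form. First I would record that $\mathcal{A}(U)$ is a nonempty closed linear subspace of $\dot H^1_\s(U)$: it contains $0$, and --- a fact I reuse below --- for each $e\in\Rd$, solving the Dirichlet problem with boundary data $e\cdot x$ (legitimate since $e\cdot x\in W^{1,\infty}$) and subtracting the mean yields some $w\in\mathcal{A}(U)$ with $\fint_U\nabla w=e$ (by the divergence theorem for trace-zero functions). The functional in \eqref{e.J.def} is $-\tfrac12\fint_U\nabla w\cdot\s\nabla w$ plus the linear functional $w\mapsto\fint_U(q\cdot\nabla w-p\cdot\a\nabla w)$, which is bounded on $\mathcal{A}(U)$ because Cauchy--Schwarz controls $\fint_U|\nabla w|$ and $\fint_U|\a\nabla w|$ in terms of $\fint_U|\s^{-1}|$, $\fint_U|\s|$, and --- for the skew contribution --- $\|\s^{-\nicefrac12}\k\s^{-\nicefrac12}\|_{L^\infty(U)}$ or $\|\s^{-\nicefrac12}\k\|_{L^2(U)}$, all finite by \eqref{e.sym.assumption}--\eqref{e.skew.assumption}. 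Thus \cite[Chapter II.1]{Temam} gives the unique mean-zero maximizer $v=v(U,p,q)$, characterized by $\fint_U\nabla w\cdot\s\nabla v=\fint_U(q\cdot\nabla w-p\cdot\a\nabla w)$ for all $w\in\mathcal{A}(U)$. Taking $w=v$ gives \eqref{e.J.energy}; subtracting the functional evaluated at any $w\in\mathcal{A}(U)$ from $J(U,p,q)$ and applying this identity with test function $v-w$ gives \eqref{e.quad.response}; and combining the identity with the Cauchy--Schwarz inequality in the $\s$-weighted inner product and \eqref{e.J.energy} gives \eqref{e.cg.diff}.

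Next I would set up the matrix representation. Because the right-hand side of the Euler--Lagrange identity depends linearly on $(p,q)$, uniqueness and linearity make $(p,q)\mapsto v(U,p,q)$ linear, so $J=\tfrac12\fint_U\nabla v\cdot\s\nabla v$ is a quadratic form in $(-p,q)$ that is nonnegative (test $w=0$ in \eqref{e.J.def}), even, and vanishes at the origin. Its $q$-block, which equals $2J(U,0,\cdot)$, is positive definite: if $J(U,0,q)=0$ then $\nabla v(U,0,q)\equiv0$ by \eqref{e.J.energy}, so the Euler--Lagrange identity forces $q\cdot\fint_U\nabla w=0$ for every $w\in\mathcal{A}(U)$, whence $q=0$ by the richness of $\mathcal{A}(U)$ noted above. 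This is the input needed for the standard identification of the coarse-grained coefficients (as in \cite{AK.Book}), which produces symmetric $\s(U)$, positive definite $\s_*(U)$, and skew-symmetric $\k(U)$ such that \eqref{e.Jmat} holds; rearranging blocks then gives \eqref{e.bfA.def} and \eqref{e.J.bigA}. The explicit averages \eqref{e.averages} follow by differentiation: the envelope theorem gives $\partial_q J(U,p,q)=\fint_U\nabla v$ and $\partial_p J(U,p,q)=-\fint_U\a\nabla v$, and differentiating \eqref{e.Jmat} in $q$ and in $p$ matches these with the claimed expressions.

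The ordering \eqref{e.cg.mat.bound} and the coarse-graining inequalities \eqref{e.cg.ineq} are then elementary. For \eqref{e.cg.mat.bound}: $\s(U)\leq\b(U)$ is immediate from \eqref{e.b.def} and $\s_*^{-1}(U)\geq0$; completing the square in $q$ gives $\inf_q J(U,p,q)=\tfrac12 p\cdot(\s(U)-\s_*(U))p$, which is $\geq0$ because $J\geq0$ everywhere, so $\s_*(U)\leq\s(U)$; and inside the suprema defining $J(U,0,q)=\tfrac12 q\cdot\s_*^{-1}(U)q$ and $J(U,p,0)=\tfrac12 p\cdot\b(U)p$, the pointwise inequalities $-\tfrac12\nabla w\cdot\s\nabla w+q\cdot\nabla w\leq\tfrac12 q\cdot\s^{-1}q$ and $-\tfrac12\nabla w\cdot\s\nabla w-p\cdot\a\nabla w\leq\tfrac12 p\cdot(\s+\k^t\s^{-1}\k)p$ (the second after rewriting $-p\cdot\a\nabla w=((\k-\s)p)\cdot\nabla w$ and completing the square, using $(\k-\s)^t\s^{-1}(\k-\s)=\s+\k^t\s^{-1}\k$) yield $\s_*^{-1}(U)\leq\fint_U\s^{-1}$ and $\b(U)\leq\fint_U(\s+\k^t\s^{-1}\k)$. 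For \eqref{e.cg.ineq}, fix $w\in\mathcal{A}(U)$; since it competes in the supremum, $J(U,0,q)\geq\fint_U(-\tfrac12\nabla w\cdot\s\nabla w+q\cdot\nabla w)$ for every $q$, and rearranging then supremizing over $q$ --- a Legendre transform of $\tfrac12 q\cdot\s_*^{-1}(U)q$ --- yields the first inequality; the same argument with $J(U,p,0)=\tfrac12 p\cdot\b(U)p$ yields the second.

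I expect the main obstacle to be the identification step of the second paragraph: confirming that $J$ has precisely the form \eqref{e.Jmat} with $\k(U)$ genuinely skew-symmetric and the block structure \eqref{e.bfA.def}, together with the explicit averages \eqref{e.averages}. This is standard but rests on the structure theory of such variational quantities rather than a single short computation, and the envelope-theorem differentiation needs justification once $J$ is known to be a smooth (quadratic) function of $(p,q)$. A secondary technical point is the boundedness of the linear functional on $\mathcal{A}(U)$ under the weaker $L^2$ alternative in \eqref{e.skew.assumption}, where one writes $p\cdot\a\nabla w=(\s^{-\nicefrac12}(\s-\k)p)\cdot(\s^{\nicefrac12}\nabla w)$ with $\s^{-\nicefrac12}\k p\in L^2(U)$ for fixed $p$, in the spirit of the truncation argument already used for uniqueness of the Dirichlet problem.
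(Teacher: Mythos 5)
The paper itself omits a proof of Proposition~\ref{p.cg.mat}, referring to~\cite[Chapter 5]{AK.Book}, so I am comparing your reconstruction to the standard argument that the citation points to. Your overall structure is correct and is exactly the intended one: set up the variational problem, extract the Euler--Lagrange identity $\fint_U\nabla w\cdot\s\nabla v=\fint_U(q\cdot\nabla w-p\cdot\a\nabla w)$, and deduce \eqref{e.J.energy}, \eqref{e.quad.response}, \eqref{e.cg.diff} from it; identify the quadratic form, get \eqref{e.averages} by the envelope theorem, and obtain \eqref{e.cg.mat.bound}--\eqref{e.cg.ineq} by testing and Legendre transforms. Your handling of well-posedness under the two alternatives in \eqref{e.skew.assumption}, and your derivation of the Besov-type bound for the linear functional, are fine.

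There is one genuine gap, and it is precisely the point you flagged as uncertain: $\k(U)$ is \emph{not} skew-symmetric in general. Matching coefficients in \eqref{e.Jmat} only forces $\s(U)$ and $\s_*(U)$ to be symmetric, with $\s_*(U)>0$; $\k(U)$ is an arbitrary $d\times d$ matrix. Indeed, the paper's own Proposition~\ref{p.Ahom} bounds the symmetric part of $\khom(\cu_n)$ by $\shom(\cu_n)-\shom_*(\cu_n)$ via \eqref{e.kn.sym}, and only in the limit does Proposition~\ref{p.ahom} yield $\khom=-\khom^t$; the skew-symmetry is a consequence of homogenization, not a feature of each finite-scale $\k(U)$. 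This breaks your argument for $\s_*(U)\leq\s(U)$: completing the square in $q$ in \eqref{e.Jmat} in fact gives
\begin{equation*}
\inf_{q\in\Rd} J(U,p,q)=\frac{1}{2}p\cdot\bigl(\s(U)-\s_*(U)\bigr)p+p\cdot\k(U)p\,,
\end{equation*}
and the last term $p\cdot\k(U)p=\frac12 p\cdot(\k(U)+\k^t(U))p$ does not vanish. Nonnegativity of $J$ alone then only yields $\s_*(U)\leq \s(U)+\k(U)+\k^t(U)$, which is strictly weaker than the claimed ordering.

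The correct argument uses both $J$ and $J^*$ (as the paper does in the proof of Proposition~\ref{p.Ahom}): choosing $q=(\s_*(U)-\k(U))p$ in \eqref{e.Jmat} and $q'=(\s_*(U)+\k(U))p$ in \eqref{e.Jstar.mat}, the cross-terms involving $p\cdot\k(U)p$ cancel between the two and one obtains the algebraic identity
\begin{equation*}
J\bigl(U,p,(\s_*(U)-\k(U))p\bigr)+J^*\bigl(U,p,(\s_*(U)+\k(U))p\bigr)=p\cdot\bigl(\s(U)-\s_*(U)\bigr)p\,,
\end{equation*}
whose left side is nonnegative because both $J$ and $J^*$ are. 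Apart from this one step, your reconstruction is correct and matches the approach the paper outsources to~\cite{AK.Book}.
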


We omit the proof of Proposition~\ref{p.cg.mat} because all the properties are essentially algebraic facts which follow from the variational representation of~$J(U,p,q)$ in~\eqref{e.J.def}. For the proofs of these facts, the proof of the following proposition, and more properties, we refer to~\cite[Chapter 5]{AK.Book}.

We will also need to consider the adjoint problem where~$\a$ is replaced with~$\a^t$. Since~$\a\in \Omega$ if and only if~$\a^t\in\Omega$ we can make sense of the problem~$-\nabla \cdot \a^t \nabla u = 0$ exactly as before, define the solution space~$\mathcal{A}_*(U)$ to be the set of solutions to the adjoint problem, and let
\begin{equation}
\label{e.Jstar.def}
J^*(U,p,q') \coloneqq \sup_{u^* \in \mathcal{A}_*(U)} \fint_U - \frac{1}{2}\nabla u^* \cdot \s \nabla u^* - p \cdot \a^t \nabla u^* + q'\cdot \nabla u^*\,.
\end{equation}

\begin{proposition}
Suppose that~$\a\in\Omega$. Then~$J^*(U,p,q')$ in~\eqref{e.Jstar.def} is well-defined for any bounded Lipschitz domain~$U$ and~$p,q'\in\mathbb{R}^d$, and:
\begin{itemize}
\item[•] The mean-zero maximizer~$v^*(U,p,q')$ in~\eqref{e.Jstar.def} exists, is unique, and
\begin{equation}
\label{e.Jstar.energy}
J^*(U,p,q') = \frac{1}{2}\fint_U \nabla v^*(U,p,q')\cdot \s\nabla v^*(U,p,q')\,.
\end{equation}
\item[•] In terms of the coarse-grained matrices,
\begin{equation}
\label{e.Jstar.mat}
J^*(U,p,q') = \frac{1}{2}p \cdot \s(U) p + \frac{1}{2}(q' - \k(U)p)\cdot \s_*^{-1}(U)(q' - \k(U)p) - p\cdot q'\,.
\end{equation}
That is, when we coarse-grain the adjoint problem we change~$\k(U) \to -\k(U)$.
\item[•] In terms of the double-variable matrix
\begin{align}
\label{e.J.bfA}
J^*(U,p,q') \nonumber = \frac{1}{2}\begin{pmatrix}
p \\ q'
\end{pmatrix}
\cdot \bfA(U) \begin{pmatrix}
p \\ q'
\end{pmatrix}
- p \cdot q'\,.
\end{align}
\item[•] The double variable matrix~$\bfA(U)$ is subadditive: if~$U_i$ are a partition of~$U$ then
\begin{equation}
\label{e.bfA.subadd}
\bfA(U) \leq \sum_{i=1}^n \frac{|U_i|}{|U|}\bfA(U_i)\,.
\end{equation}
\item[•] The double-variable matrix is uniformly elliptic at each scale:
\begin{equation}
\label{e.bfAs.ordered}
\biggl( \fint_U \bfA^{-1} \biggr)^{-1} \leq \bfA(U) \leq \fint_U \bfA\,.
\end{equation}
\end{itemize}

\end{proposition}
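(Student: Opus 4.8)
The plan is to reduce the first three bullet points (well-posedness of the adjoint problem, existence of $v^*$, and the matrix identities~\eqref{e.Jstar.mat} and~\eqref{e.J.bfA}) to Proposition~\ref{p.cg.mat} via the transposition symmetry $\a\mapsto\a^t$, and to prove the last two bullet points by short deterministic arguments --- restriction to subdomains for subadditivity, and a pointwise Fenchel inequality for the ellipticity bounds. Since $\a\in\Omega$ if and only if $\a^t\in\Omega$, and since $\a$ and $\a^t$ have the same symmetric part $\s$ (while the skew-symmetric part of $\a^t$, namely $-\k$, again satisfies~\eqref{e.skew.assumption}), the well-posedness discussion of Section~\ref{ss.cg.defs} and all of Proposition~\ref{p.cg.mat} apply verbatim to the field $\a^t$. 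In particular $\mathcal{A}_*(U)$ is nonempty and closed under the $\dot{H}^1_{\s}(U)$ norm, so the functional in~\eqref{e.Jstar.def} --- which is precisely the functional of~\eqref{e.J.def} for the field $\a^t$ with $q'$ in place of $q$ --- is well-defined with a unique mean-zero maximizer $v^*(U,p,q')$, and testing its Euler--Lagrange equation against $v^*$ itself yields~\eqref{e.Jstar.energy}. Running the constructions behind~\eqref{e.Jmat},~\eqref{e.averages} and~\eqref{e.J.bigA} for the field $\a^t$ then produces coarse-grained matrices $\s^{(\a^t)}(U)$, $\s_*^{(\a^t)}(U)$, $\k^{(\a^t)}(U)$ and a double-variable matrix $\bfA^{(\a^t)}(U)$ for the adjoint problem.

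The key point for~\eqref{e.Jstar.mat} and~\eqref{e.J.bfA} is that coarse-graining is compatible with transposition: $\s^{(\a^t)}(U)=\s(U)$, $\s_*^{(\a^t)}(U)=\s_*(U)$, and $\k^{(\a^t)}(U)=-\k(U)$, equivalently $\bfA^{(\a^t)}(U)=D\,\bfA(U)\,D$ with $D:=\mathrm{diag}(-\Id,\Id)$. I would prove this by inspecting the averages formulae~\eqref{e.averages}, into which $\k(U)$ enters only through the skew-symmetric part of the field, together with the standard identity relating the direct and adjoint maximizers (as in~\cite[Chapter~5]{AK.Book}); alternatively one simply tracks how the quadratic form~\eqref{e.Jmat} changes when $\k$ is replaced by $-\k$. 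Granting this, substituting $\bfA^{(\a^t)}(U)=D\bfA(U)D$ into the $\a^t$-version of~\eqref{e.J.bigA} and using $D\binom{-p}{q'}=\binom{p}{q'}$ gives~\eqref{e.J.bfA}, after which~\eqref{e.Jstar.mat} is just the block expansion of~\eqref{e.J.bfA} using~\eqref{e.bfA.def} and the skew-symmetry of $\k(U)$.

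For subadditivity~\eqref{e.bfA.subadd}: if $\{U_i\}$ partitions $U$ and $u\in\mathcal{A}(U)$, then $u$ solves $-\nabla\cdot\a\nabla u=0$ on each $U_i$, so $u-(u)_{U_i}\in\mathcal{A}(U_i)$; the integrand of~\eqref{e.J.def} depends on $u$ only through $\nabla u$ and $\a\nabla u$, hence is additive over the partition and unchanged by subtracting constants, so $\fint_U(\cdots)=\sum_i\frac{|U_i|}{|U|}\fint_{U_i}(\cdots)\le\sum_i\frac{|U_i|}{|U|}J(U_i,p,q)$. Taking the supremum over $u\in\mathcal{A}(U)$ gives $J(U,p,q)\le\sum_i\frac{|U_i|}{|U|}J(U_i,p,q)$ for all $p,q$; inserting~\eqref{e.J.bigA}, the $p\cdot q$ terms cancel and, since $\binom{-p}{q}$ ranges over all of $\mathbb{R}^{2d}$, we obtain the Loewner inequality~\eqref{e.bfA.subadd}.

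Finally, for~\eqref{e.bfAs.ordered}: the upper bound comes from the pointwise Fenchel identity $\sup_{\ell\in\Rd}\bigl(-\tfrac12\ell\cdot\s(x)\ell-p\cdot\a(x)\ell+q\cdot\ell\bigr)=\tfrac12\binom{-p}{q}\cdot\bfA(x)\binom{-p}{q}-p\cdot q$, where $\bfA(x)$ is the pointwise matrix of~\eqref{e.A.def}; evaluating at $\ell=\nabla u(x)$ for $u\in\mathcal{A}(U)$, averaging over $U$, and taking the supremum gives $J(U,p,q)+p\cdot q\le\tfrac12\binom{-p}{q}\cdot\bigl(\fint_U\bfA\bigr)\binom{-p}{q}$, i.e. $\bfA(U)\le\fint_U\bfA$ by~\eqref{e.J.bigA}. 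The lower bound $\bigl(\fint_U\bfA^{-1}\bigr)^{-1}\le\bfA(U)$ I would obtain from the same Fenchel argument applied to the complementary constrained-minimization (``dual'') representation of $J$, whose coarse-grained matrix is bounded below by $\bigl(\fint_U\bfA^{-1}\bigr)^{-1}$, together with the standard bracketing of the two coarse-grained matrices analogous to $\s_*(U)\le\s(U)$ in~\eqref{e.cg.mat.bound}. I expect the only genuinely non-mechanical step to be the transposition-compatibility $\bfA^{(\a^t)}(U)=D\bfA(U)D$, with the dual representation for the lower ellipticity bound a close second; both are carried out in detail in~\cite[Chapter~5]{AK.Book}, which is why we only indicate them here.
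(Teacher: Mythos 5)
The paper does not actually prove this proposition: immediately before stating it the author writes that the proofs of these facts ``and the proof of the following proposition'' are deferred to~\cite[Chapter 5]{AK.Book}. So there is no in-paper argument to compare against, and the right test is whether your sketch is a faithful reconstruction of the standard coarse-graining argument that the citation points to --- and it is. You correctly isolate the single nontrivial step: the transposition compatibility $\s^{(\a^t)}(U)=\s(U)$, $\s_*^{(\a^t)}(U)=\s_*(U)$, $\k^{(\a^t)}(U)=-\k(U)$ (equivalently $\bfA^{(\a^t)}(U)=D\bfA(U)D$ with $D=\mathrm{diag}(-\Id,\Id)$), from which~\eqref{e.Jstar.mat} and~\eqref{e.J.bfA} follow by pure algebra exactly as you show. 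Your subadditivity argument (restrict $u\in\mathcal{A}(U)$ to $U_i$, subtract $(u)_{U_i}$, use that the integrand depends only on $\nabla u$, take the supremum, then cancel the $p\cdot q$ term and use that $\binom{-p}{q}$ sweeps out $\mathbb{R}^{2d}$) is the standard and correct one, and your pointwise Fenchel computation for the upper bound in~\eqref{e.bfAs.ordered} is accurate --- note it produces $\bfA_{12}=-\k^t\s^{-1}$, which should be taken as the intended (symmetric) form of~\eqref{e.A.def}, where the printed sign on the top-right block appears to be a typo; the quadratic form $\binom{-p}{q}\cdot\bfA\binom{-p}{q}$ is of course insensitive to this. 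The lower bound in~\eqref{e.bfAs.ordered} indeed requires the complementary constrained-minimization functional ($J_*$, or the ``dual'' $J$) and its superadditivity, which you correctly identify as the second genuinely non-mechanical ingredient and appropriately defer to~\cite[Chapter 5]{AK.Book}. Your deferrals are exactly where the paper's own deferral lies, so the sketch is complete to the same standard as the paper.
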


Finally, we note that the set of solutions to the equation~$-\nabla \cdot \a \nabla u = 0$ is unaffected by the addition to~$\a$ of a constant skew-symmetric matrix, and therefore we should only regard the problem as well-defined up to a centering operation. The coarse-graining commutes with this centering operation: if we fix a constant, skew-symmetric matrix~$\mathbf{h}$ and consider the coarse-graining of the operator~$\a - \mathbf{h}$, then the coarse-grained matrices~$\s(U)$ and~$\s_*(U)$ remain unchanged, while~$\k(U)$ is replaced by~$\k(U) - \mathbf{h}$. For more details we refer to~\cite[Section 2.5]{AK.HC}.

\subsection{Coarse-grained energy estimates}
\label{ss.cg.analysis}

In this section we introduce the function spaces we will use throughout the paper and prove energy estimates for solutions which depend on~$\a$ only through the coarse-grained ellipticity constants. We first introduce the function spaces. For every~$n\in\mathbb{Z}$ and~$s\in(0,1)$ define the semi-norm
\begin{equation}
\label{e.Besov.def}
[g]_{\underline{H}^s(\cu_n)} \coloneqq \biggl( \sum_{k=-\infty}^n  3^{-2sk} \avsum_{z\in 3^{k-1}\mathbb{Z}^d \cap \cu_n} \|g - (g)_{z+\cu_k\cap \cu_n}\|_{\L^2(z+\cu_k \cap \cu_n)}^2 \biggr)^{\nicefrac{1}{2}}\,,
\end{equation}
and norm
\begin{equation}
\label{e.Besov.norm.def}
\| g\|_{\underline{H}^s(\cu_n)} \coloneqq 3^{-sn}\|g\|_{\L^2(\cu_n)} + [g]_{\underline{H}^s(\cu_n)}\,.
\end{equation}
The space~$H^s(\cu_n)$ is defined as the set of functions~$g$ such that the norm in~\eqref{e.Besov.norm.def} is finite, or equivalently, the closure of smooth functions under that norm. By~\cite[Lemma A.4]{AK.HC} there exists a constant~$C(d)$ such that
\begin{equation}
\label{e.seminorm.integral}
\biggl( \fint_{\cu_n} \int_{\cu_n} \frac{|g(x)-g(y)|^2}{|x-y|^{d+2s}}\biggr)^{\nicefrac12} \leq C[g]_{\underline{H}^s(\cu_n)} \leq C\biggl( \fint_{\cu_n} \int_{\cu_n} \frac{|g(x)-g(y)|^2}{|x-y|^{d+2s}}\biggr)^{\nicefrac12}\,,
\end{equation}
so our spaces coincide with the usual Sobolev spaces. We define the dual norms
\begin{align*}
\|f\|_{\underline{H}^{-s}(\cu_n)} & \coloneqq \sup \biggl\{ \fint_{\cu_n} fg : g \in C_c^\infty(\cu_0) \, \|g\|_{\underline{H}^s(\cu_n)} \leq 1 \biggr\}\,, \\
\|f\|_{\Hminusuls{-s}(\cu_n)} & \coloneqq \sup \biggl\{ \fint_{\cu_n} fg : g \in C^\infty(\cu_n) \, \|g\|_{\underline{H}^s(\cu_n)} \leq 1 \biggr\}\,.
\end{align*}
and wherever we use the dual norm we will bound it by
\begin{equation}
\label{e.Hring.def}
\|f\|_{\underline{H}^{-s}(\cu_n)} \leq \|f\|_{\Hminusuls{-s}(\cu_n)} \leq C(d)\|f\|_{\Hring^{-s}} : = C(d)\biggl(\sum_{k=-\infty}^n 3^{2sk} \avsum_{z\in 3^{k-1}\mathbb{Z}^d, z+\cu_k \subseteq \cu_n} |(f)_{z+\cu_k}|^2 \biggr)^{\frac{1}{2}}\,,
\end{equation}
a proof of which can be found in~\cite[Lemma A.1]{AK.HC}. If~$s< \nicefrac12$ then compactly supported functions are dense in~$H^s(\cu_n)$ so~$\|f\|_{\underline{H}^{-s}(\cu_n)} = \|f\|_{\Hminusuls{-s}(\cu_n)}$, while for~$s\geq \nicefrac12$ the norms may be different. We use these spaces to obtain energy estimates for solutions which depend only on the coarse-grained ellipticity constants.

\begin{proposition}
\label{p.gen.poincare}
Suppose that~$\nabla \cdot \a\nabla w =\nabla \cdot \mathbf{f}$ in~$\cu_0$. For every~$s \in (0,1)$ and~$\epsilon \in (0,s)$ there exists a constant~$C = C(s,\epsilon,d)$ such that
\begin{equation}
\label{e.cg.grad}
\|\nabla w\|_{\hat{H}^{-s}(\cu_0)} \leq C\lambda_{s-\epsilon}^{-\nicefrac12}(\cu_0) \|\s^{\nicefrac12}\nabla w\|_{\L^2(\cu_0)} + C\lambda_{s-\epsilon}^{-1}(\cu_0)\|\mathbf{f}\|_{\underline{H}^{s}(\cu_0)}\,,
\end{equation}
and
\begin{equation}
\label{e.cg.flux}
\|\a\nabla w \|_{\hat{H}^{-s}(\cu_0)} \leq C\Lambda_{s}^{\nicefrac12}(\cu_0) \|\s^{\nicefrac12}\nabla w\|_{\L^2(\cu_0)}  + C \frac{\Lambda^{\nicefrac12}_{s}(\cu_0)}{\lambda_{s-\epsilon}^{-\nicefrac12}(\cu_0)} \|\mathbf{f}\|_{\underline{H}^{s}(\cu_0)}\,.
\end{equation}
\end{proposition}

\begin{proof}
\emph{Step 1:}
By~\cite[Lemma A1]{AK.HC}
\begin{align}
\label{e.negative.norm.is}
\|\nabla w\|_{\Hminusuls{-s}(\cu_0)} \leq C \biggl(\sum_{k=-\infty}^0 3^{2sk} \avsum_{z\in 3^{k}\mathbb{Z}^d \cap \cu_0} |(\nabla w)_{z+\cu_k}|^2 \biggr)^{\nicefrac12}\,.
\end{align}
For each~$z\in 3^{k}\mathbb{Z}^d \cap \cu_0$ we let~$\rho_{z}$ be the solution to the Dirichlet problem~$-\nabla \cdot \a\nabla \rho_z = \nabla \cdot (\mathbf{f} - (f)_{z+\cu_k})$ in~$z+\cu_k$ with zero boundary data. From testing the equation for~$\rho_z$ with itself we have
\begin{align*}
\|\s^{\nicefrac12}\nabla \rho_z\|_{\L^2(z+\cu_k)} & \leq [\mathbf{f}]_{\underline{H}^s(z+\cu_k)}^{\nicefrac12}\|\nabla \rho_z\|_{\Hminusuls{-s}(z+\cu_k)}^{\nicefrac12} \\
& \leq C[\mathbf{f}]_{\underline{H}^s(z+\cu_k)}^{\nicefrac12}\|\nabla w\|_{\Hminusuls{-s}(z+\cu_k)}^{\nicefrac12} \\
& \qquad +  C[\mathbf{f}]_{\underline{H}^s(z+\cu_k)}^{\nicefrac12}\|\nabla w-\nabla\rho_z\|_{\Hminusuls{-s}(z+\cu_k)}^{\nicefrac12}\,.
\end{align*}
Now by Proposition~\ref{p.coarse.poincare}
\begin{align*}
\|\nabla w-\nabla\rho_z\|_{\Hminusuls{-s}(z+\cu_k)} \leq C 3^{sk}\lambda_s^{-\nicefrac12}(z+\cu_k)\|\s^{\nicefrac12}(\nabla w-\nabla \rho_z)\|_{\L^2(z+\cu_k)}
\end{align*}
so that combining the above two equations
\begin{align*}
\lefteqn{
\|\s^{\nicefrac12}(\nabla w -\nabla\rho_{z})\|_{\L^2(z+\cu_k)}
} \qquad \qquad & \\
& \leq
\|\s^{\nicefrac12}\nabla w\|_{\L^2(z+\cu_k)} + \|\s^{\nicefrac12}\nabla \rho_z\|_{\L^2(z+\cu_k)}
\\ & \leq
C\|\s^{\nicefrac12}\nabla w\|_{\L^2(z+\cu_k)} + C[\mathbf{f}]_{\underline{H}^s(z+\cu_k)}^{\nicefrac12}\|\nabla w\|_{\Hminusuls{-s}(z+\cu_k)}^{\nicefrac12}
\\ & \qquad
+ C[\mathbf{f}]_{\underline{H}^s(z+\cu_k)}^{\nicefrac12} 3^{\frac{sk}{2}}\lambda_s^{-\nicefrac{1}{4}}(z+\cu_k) \|\s^{\nicefrac12}\nabla (w-\rho_z)\|_{\L^2(z+\cu_k)}^{\nicefrac12}\,.
\end{align*}
Using Young's inequality to re-absorb the energy factor in the last term we obtain
\begin{align}
\label{e.energy.error}
\|\s^{\nicefrac12}\nabla (w -\rho_z)\|_{\L^2(z+\cu_k)} & \leq C\|\s^{\nicefrac12}\nabla w\|_{\L^2(z+\cu_k)} + C[\mathbf{f}]_{\underline{H}^s(z+\cu_k )}^{\nicefrac12}\|\nabla w\|_{\Hminusuls{-s}(z+\cu_k)}^{\nicefrac12} \nonumber \\
& \qquad + C[\mathbf{f}]_{\underline{H}^s(z+\cu_k)} 3^{sk}\lambda_s^{-\nicefrac{1}{2}}(z+\cu_k)
\end{align}
Then using the definition of the dual norm, Proposition~\ref{p.coarse.poincare},~\eqref{e.energy.error}, and Young's inequality, for any~$\delta_k > 0$
\begin{align*}
\lefteqn{
3^{sk}|(\nabla w - \nabla \rho_z)_{z+\cu_k}| 
} \qquad \qquad  & \\ 
& \leq \|\nabla w - \nabla \rho_z\|_{\Hminusuls{-s}(z+\cu_k)} \\
& \leq  C 3^{sk}\lambda_s^{-\nicefrac12}(z+\cu_k) \|\s^{\nicefrac12}\nabla (w-\rho_z)\|_{\L^2(z+\cu_k)} \\
& \leq C 3^{sk}\lambda_s^{-\nicefrac12}(z+\cu_k)\|\s^{\nicefrac12}\nabla w\|_{\L^2(z+\cu_k)} + C\delta_k^{-\nicefrac12}3^{2sk}\lambda_s^{-1}(z+\cu_k)[\mathbf{f}]_{\underline{H}^s(z+\cu_k)} \\
& \qquad + \delta_k^{\nicefrac12} \|\nabla w\|_{\Hminusuls{-s}(z+\cu_k)} \,.
\end{align*}
We are now ready to bound the quantity in~\eqref{e.negative.norm.is}, since we have constructed~$\rho(z+\cu_k)$ to have zero boundary data so that~$(\nabla \rho(z+\cu_k) )_{z+\cu_k} = 0$. Then
\begin{align}
\label{e.start.prop23}
\lefteqn{
\sum_{k=-\infty}^0 3^{2sk} \avsum_{z\in 3^k\mathbb{Z}^d \cap \cu_0}|(\nabla w)_{z+\cu_k}|^2
} \qquad \qquad & \notag \\
& = \sum_{k=-\infty}^0 3^{2sk} \avsum_{z\in 3^k\mathbb{Z}^d \cap \cu_0}|(\nabla w - \nabla\rho(z+\cu_k) ))_{z+\cu_k}|^2 \notag \\
& \leq  C\|\s^{\nicefrac12}\nabla w\|_{\L^2(\cu_0)}^2 \sum_{k=-\infty}^0 3^{2sk}\sup_{z \in 3^k\mathbb{Z}^d \cap \cu_0} \lambda_s^{-1}(z+\cu_k) \notag \\
& \qquad + C \sum_{k=-\infty}^0 3^{4sk}\delta^{-1}_k\sup_{z \in 3^k\mathbb{Z}^d \cap \cu_0} \lambda_s^{-2}(z+\cu_k) \avsum_{z \in 3^k\mathbb{Z}^d \cap \cu_0} [\mathbf{f}]_{\underline{H}^s(z+\cu_k)}^2 \notag \\
& \qquad +  \sum_{k=-\infty}^0 \avsum_{z \in 3^k\mathbb{Z}^d \cap \cu_0} \delta_k \|\nabla w\|^2_{\Hminusuls{-s}(z+\cu_k)}\,.
\end{align}
Now choosing~$\epsilon >0$ and~$\delta_k = \delta_0 3^{2\epsilon k}$ we estimate the ellipticity factor by the inequality~$\sum a_k^2 \leq (\sum a_k )^2$ and
\begin{align}
\label{e.first23}
\sum_{k=-\infty}^0 3^{2(s-\nicefrac{\epsilon}{2})k}\sup_{z\in 3^k\mathbb{Z}^d \cap \cu_0} \lambda_s^{-1}(z+\cu_k) & \leq \sum_{k=-\infty}^1 3^{2(s-\nicefrac{\epsilon}{2})k} \sum_{j=-\infty}^k 3^{2(s-\epsilon)(j-k)} \sup_{z'\in 3^j\mathbb{Z}^d \cap \cu_0} |\s_*^{-1}(z'+\cu_j)| \notag\\
& \leq \frac{C}{\epsilon} \lambda_{s-\epsilon}^{-1}(\cu_0)\,.
\end{align}
The semi-norm of~$f$ is estimated using the integral expression in~\eqref{e.seminorm.integral} as
\begin{align}
\label{e.second23}
\avsum_{z\in 3^k\mathbb{Z}^d \cap \cu_0} [\mathbf{f}]_{\underline{H}^s(z+\cu_k\cap \cu_0)}^2 & \leq C \avsum_{z\in 3^k\mathbb{Z}^d \cap \cu_0} \fint_{z+\cu_k \cap \cu_0} \int_{z+\cu_k \cap \cu_0} \frac{|f(x)-f(y)|^2}{|x-y|^{d+2(s+\epsilon)}} dx dy \notag \\
& \leq C \fint_{\cu_0} \int_{\cu_0} \frac{|f(x)-f(y)|^2}{|x-y|^{d+2(s+\epsilon)}} dx dy\,.
\end{align}
We may re-absorb the last term in~\eqref{e.start.prop23} by taking~$\delta_0$ small enough depending only on~$d, \epsilon$ and~$s$ because
\begin{align}
\label{e.third23}
\sum_{k=-\infty}^0 3^{2\epsilon k} \avsum_{z\in 3^k\mathbb{Z}^d \cap \cu_0} \|\nabla w\|^2_{\Hminusuls{-s}(z+\cu_k)} & \leq \sum_{k=-\infty}^1 3^{2\epsilon k} \sum_{j=-\infty}^k 3^{2sj} \avsum_{\substack{z'\in 3^j\mathbb{Z}^d \cap \cu_0 \\ z\in 3^k\mathbb{Z}^d \cap \cu_0}} |(\nabla w)_{z'+\cu_j}|^2 \notag \\
& \leq \frac{C}{\epsilon} \sum_{j=-\infty}^0 3^{2sj} \avsum_{z'\in 3^j\mathbb{Z}^d \cap \cu_0} |(\nabla w)_{z'+\cu_j}|^2\,.
\end{align}
Inserting~\eqref{e.first23},~\eqref{e.second23} and~\eqref{e.third23} into~\eqref{e.start.prop23} completes the proof of~\eqref{e.cg.grad}.

\noindent
\emph{Step 2: Fluxes}

For the fluxes, if~$z \in 3^k\mathbb{Z}^d \cap \cu_0$ we let~$\chi_{z}$ be the unique solution to
\begin{equation*}
\left\{
\begin{aligned}
& \nabla \cdot \a\nabla \chi_{z} = \nabla \cdot (\mathbf{f} - (\mathbf{f})_{z+\cu_k} )  & \quad \mbox{in} & \quad z+\cu_k \\
& \hat{n}\cdot (\a\nabla\chi_{z}  +\mathbf{f} - (\mathbf{f})_{z+\cu_k}) = 0 & \quad \mbox{on} & \quad  \partial (z + \cu_k )\\
\end{aligned}
\right.
\end{equation*}
Then
\begin{equation*}
\int_{z+\cu_k} \a\nabla \chi_{z} = 0\,.
\end{equation*}
Now testing the equation for~$\chi_{z}$ with itself and then applying~\eqref{e.cg.grad},
\begin{align*}
\|\s^{\nicefrac12}\nabla \chi_{z}\|_{\L^2(z+\cu_k )} & \leq \|\mathbf{f}\|_{\underline{H}^s(z+\cu_k)} \|\nabla \chi_{z}\|_{\underline{\hat{H}}^{-s}(z+\cu_k)} \\
& \leq C 3^{sk}\lambda_{s-\epsilon}^{-\nicefrac12}(z+\cu_k) \|\s^{\nicefrac12}\nabla \chi_{z}\|_{\L^2(z+\cu_k)}[\mathbf{f}]_{\underline{H}^s(z+\cu_k )}  \\
& \qquad + C3^{2sk} \lambda_{s-\epsilon}^{-1}(z+\cu_k) \|\mathbf{f}\|_{\underline{H}^{s}(z+\cu_k)}^2\,,
\end{align*}
so that after re-absorbing the energy factor,
\begin{align*}
\|\s^{\nicefrac12}\nabla \chi_{z}\|_{\L^2(z+\cu_k)} \leq C3^{sk}\lambda_{s-\epsilon}^{-\nicefrac12}(z+\cu_k) \|\mathbf{f}\|_{\underline{H}^{s}(z+\cu_k)}\,.
\end{align*}
Combining~\eqref{e.cg.ineq} with the above display,
\begin{align*}
3^{sk}| (\a\nabla w - \a\nabla \chi_{z})_{z+\cu_k} | & \leq 3^{sk}|\b^{\nicefrac12}(z+\cu_k)| \|\s^{\nicefrac12}\nabla ( w - \chi_{z} ) \|_{\L^2(z+\cu_k)} \\
& \leq 3^{sk}|\b^{\nicefrac12}(z+\cu_k)| \|\s^{\nicefrac12}\nabla  w \|_{\L^2(z+\cu_k)} \\
& \qquad + C3^{2sk}|\b^{\nicefrac12}(z+\cu_k)| \lambda_{s-\epsilon}^{-\nicefrac12}(z+\cu_k) [\mathbf{f}]_{\underline{H}^{s}(z+\cu_k)}\,.
\end{align*}
Using~\cite[Lemma A.1]{AK.HC},~$(\a\nabla w - \a\nabla \chi_{z})_{z+\cu_k} = (\a\nabla w)_{z+\cu_k}$ and the above display,
\begin{align*}
\lefteqn{
\| \a\nabla w\|_{\underline{\hat{H}}^{-s}(\cu_0)}^2
} \quad & \\
& \leq C \sum_{k=-\infty}^0 \avsum_{z\in 3^{k}\mathbb{Z}^d \cap \cu_0} 3^{2sk}| (\a\nabla w - \a\nabla \chi_{z})_{z+\cu_k} |^2  \\
& \leq C\|\s^{\nicefrac12}\nabla w\|_{\L^2(\cu_0)}^2 \Lambda_{s}(\cu_0) \\
& \qquad  +  C \|\mathbf{f}\|_{\underline{H}^{s}(\cu_0)}^2 \sum_{k=-\infty}^0 3^{2sk}\sup_{z\in 3^{k}\mathbb{Z}^d}|\b(z+\cu_k)| 3^{2s k}\sup_{z\in 3^{k}\mathbb{Z}^d \cap \cu_0} \lambda_{s-\epsilon}(z+\cu_k) \\
& \leq C\|\s^{\nicefrac12}\nabla w\|_{\L^2(\cu_0)}^2 \Lambda_{s}(\cu_0) + C \|\mathbf{f}\|_{\underline{H}^{s}(\cu_0)}^2 \Lambda_{s}(\cu_0) \lambda_{s-\epsilon}^{-1}(\cu_0)\,.
\end{align*}
\end{proof}

Our next proposition is a coarse-grained energy estimate for solutions.

\begin{proposition}
\label{p.boundary.data}
For every~$s\in (0,1)$ and~$\epsilon \in (0,s)$ there exists a constant~$C= C(d,s,\epsilon)$ such that if~$u\in H^1_{\s}(\cu_0)$ satisfies
\begin{align}
\label{e.PDE.problem}
\left\{
\begin{aligned}
& -\nabla \cdot \a \nabla u = \nabla \cdot \mathbf{f} & \mbox{in } \cu_0\,, \\
& u= h & \mbox{on } \partial \cu_0\,,
\end{aligned}
\right.
\end{align}
then
\begin{align}
\label{e.Dirichlet.energy}
\|\s^{\nicefrac12} \nabla u\|_{\L^2(\cu_0)}  \leq C \Lambda_{s}^{\nicefrac12}(\cu_0) \|\nabla h\|_{\underline{H}^s(\cu_0)} + C \lambda_{s-\epsilon}^{-\nicefrac12}(\cu_0) \|\mathbf{f}\|_{\underline{H}^{s}(\cu_0)}\,,
\end{align}
and if instead~$u\in H^1_{\s}(\cu_0)$ solves the Neumann problem
\begin{align}
\label{e.PDE.Neumann}
\left\{
\begin{aligned}
& \nabla \cdot \a \nabla u = \nabla \cdot \mathbf{f} & \mbox{in } \cu_0\,, \\
& \hat{n} \cdot (\a\nabla u - \mathbf{f}) = 0 & \mbox{on } \partial \cu_0\,,
\end{aligned}
\right.
\end{align}
then
\begin{align}
\label{e.Neumann.energy}
\|\s^{\nicefrac12} \nabla u\|_{\L^2(\cu_0)}  \leq C \lambda_{s-\epsilon}^{-\nicefrac12}(\cu_0) \|\mathbf{f}\|_{\underline{H}^{s}(\cu_0)}\,.
\end{align}
\end{proposition}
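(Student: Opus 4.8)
\textbf{Proof sketch for Proposition~\ref{p.boundary.data}.}

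The plan is to prove both bounds by the same scheme: test the equation against the solution to reduce the left-hand side to a pairing of $\nabla u$ (or of the flux $\a\nabla u$) with the data, then estimate that pairing by decomposing the data over triadic scales and controlling local cube-averages of $\nabla u$ and $\a\nabla u$ with the coarse-graining inequalities \eqref{e.cg.ineq}. I would treat the Neumann problem first. Testing \eqref{e.PDE.Neumann} with $u$ and using $\nabla u\cdot\k\nabla u=0$ gives $\|\s^{1/2}\nabla u\|_{L^2(U)}^2=\int_U\nabla u\cdot\mathbf{f}$, so it suffices to bound $|\int_U\nabla u\cdot\mathbf{f}|$ by $C\bigl(|\s_*^{-1}(U)|^{1/2}+\lambda_s^{-1/2}(\cu_0)\bigr)\|\mathbf{f}\|_{\underline{H}^s(U)}\,\|\s^{1/2}\nabla u\|_{L^2(U)}$ and divide. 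For the Dirichlet problem \eqref{e.PDE.problem}, testing the equation with $u-h\in H^1_{\s,0}(U)$ gives $\int_U\nabla(u-h)\cdot\a\nabla u=-\int_U\nabla(u-h)\cdot\mathbf{f}$, and since $\nabla u\cdot\a\nabla u=\nabla u\cdot\s\nabla u$ this yields the identity $\|\s^{1/2}\nabla u\|_{L^2(U)}^2=\int_U\nabla h\cdot\a\nabla u-\int_U\nabla(u-h)\cdot\mathbf{f}$; the point is that the boundary data now enters only through $\nabla h$ paired against the flux $\a\nabla u$, never through the weighted quantity $\s^{1/2}\nabla h$ (which is \emph{not} controlled by $\|\nabla h\|_{\underline{H}^s}$ when $\s$ is irregular on small scales).

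The core estimate is: for a function $w$ solving the homogeneous equation with $\|\s^{1/2}\nabla w\|_{L^2(U)}\le1$, one has $|\int_U\nabla w\cdot\mathbf{g}|\les\lambda_s^{-1/2}(\cu_0)\|\mathbf{g}\|_{\underline{H}^s(U)}$ and $|\int_U\nabla h\cdot\a\nabla w|\les\bigl(|\b(U)|^{1/2}+\Lambda_s^{1/2}(\cu_0)\bigr)\|\nabla h\|_{\underline{H}^s(U)}$, with implicit constant $C(U,d,s)$. One decomposes the data over triadic scales: $\mathbf{g}=(\mathbf{g})_U+\sum_{k}\mathcal P_k\mathbf{g}$, where $\mathcal P_k\mathbf{g}$ is constant equal to $c_V:=(\mathbf{g})_{z+\cu_k}-(\mathbf{g})_{z+\cu_{k+1}}$ on each triadic subcube $V=z+\cu_k$ of $U$ (the Lipschitz boundary layer being absorbed into $C(U,d,s)$). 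On $V$ the scale-$k$ contribution of $\int_U\nabla w\cdot\mathbf{g}$ is $|V|\,c_V\cdot(\nabla w)_V$, and the first inequality of \eqref{e.cg.ineq} gives $|(\nabla w)_V|^2\le|\s_*^{-1}(V)|\fint_V\nabla w\cdot\s\nabla w$; likewise the contribution of $\int_U\nabla h\cdot\a\nabla w$ is $|V|\,c_V\cdot(\a^t\nabla w)_V$, and the second inequality of \eqref{e.cg.ineq} applied to the adjoint ($\b$ being unchanged under $\a\mapsto\a^t$) gives $|(\a^t\nabla w)_V|^2\le|\b(V)|\fint_V\nabla w\cdot\s\nabla w$. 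Summing over cubes at scale $k$ by Cauchy--Schwarz and using $\sum_V\|\s^{1/2}\nabla w\|_{L^2(V)}^2\le1$, the scale-$k$ term is $\le\bigl(\max_V|\s_*^{-1}(V)|\bigr)^{1/2}\bigl(\sum_V|c_V|^2|V|\bigr)^{1/2}$ (respectively with $|\b(V)|$); since $\sum_{V\ \text{scale }k}|c_V|^2|V|\les\sum_{V'\ \text{scale }k+1}\int_{V'}|\mathbf{g}-(\mathbf{g})_{V'}|^2$, a final Cauchy--Schwarz in $k$ against the weights $3^{\pm sk}$ reconstructs exactly the defining sums of $\lambda_s^{-1}(\cu_0)$ in \eqref{e.lambda.def} (resp.\ $\Lambda_s(\cu_0)$ in \eqref{e.Lambda.def}) and of the seminorm $[\,\cdot\,]_{\underline{H}^s}$ in \eqref{e.Besov.def}, while the top-scale term $(\mathbf{g})_U$ yields the $|\s_*^{-1}(U)|^{1/2}$ (resp.\ $|\b(U)|^{1/2}$) prefactor.

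The main obstacle is that the solution $u$ only solves an \emph{inhomogeneous} equation on a subcube $V$, so it is not in $\A(V)$ and \eqref{e.cg.ineq} does not apply to it directly. I would handle this by subtracting, on each $V$, the function $\phi_V\in H^1_{\s,0}(V)$ solving the same equation on $V$ with zero boundary data; then $u-\phi_V\in\A(V)$ (modulo its mean) and, since $\phi_V$ has zero trace, $(\nabla u)_V=(\nabla(u-\phi_V))_V$, so the estimate above applies to $u-\phi_V$ at the cost of carrying $\|\s^{1/2}\nabla\phi_V\|_{L^2(V)}$ in the local energy. The delicate point — and where the argument genuinely needs the higher-regularity norm $\|\mathbf{f}\|_{\underline{H}^s}$ rather than merely $L^2$ control — is that one must \emph{not} estimate $\phi_V$ by a crude energy estimate, which reintroduces $\|\s^{-1/2}(\mathbf{f}-(\mathbf{f})_V)\|_{L^2(V)}$ with $\s^{-1}$ unbounded, but instead apply Proposition~\ref{p.boundary.data} itself to $\phi_V$ at the smaller scale $V$, with zero boundary data and right-hand side $\nabla\cdot(\mathbf{f}-(\mathbf{f})_V)$. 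Using that $\lambda_s^{-1}(V)\le3^{-2sk}\lambda_s^{-1}(\cu_0)$ for a scale-$k$ cube, the contributions of the $\phi_V$ across scales form a summable geometric-type series and telescope into the claimed bound; this is made rigorous by an induction on the number of scales (or by an a priori estimate together with the observation that the implicit constant is finite). For the Neumann bound \eqref{e.Neumann.energy} the same scheme applies with $\phi_V$ the local zero-trace solution and no $h$-term. I expect the bookkeeping of this recursion, together with making the first-paragraph identities rigorous within the $H^1_\s$ framework, to be the only substantial work.
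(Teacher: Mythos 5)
Your overall scheme matches the paper's: test the equation to reduce the left side to a pairing of the data with $\nabla u$ or $\a\nabla u$, expand that pairing over triadic subcubes (this is the content of Proposition~\ref{p.negative.norm.U}, which the paper invokes directly), bound cube averages by the coarse-graining inequalities~\eqref{e.cg.ineq}, and — crucially — subtract on each subcube $V$ a local zero-trace solution $\phi_V$ so that $u-\phi_V\in\mathcal A(V)$ and $(\nabla\phi_V)_V=0$. Your observation that a crude energy estimate for $\phi_V$ (yielding $\|\s^{-1/2}(\mathbf f-(\mathbf f)_V)\|_{L^2(V)}$) would reintroduce $\s^{-1}$ and destroy the estimate is exactly the right obstacle to identify. (A small slip: when bounding $\fint_U\a\nabla v\cdot\nabla h$ one uses the flux coarse-graining inequality for $\a\nabla v$ directly, not for $\a^t\nabla v$; no passage to the adjoint is needed.)

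The real divergence from the paper, and where there is a genuine gap, is in how you control $\|\s^{1/2}\nabla\phi_V\|_{L^2(V)}$. You propose to apply Proposition~\ref{p.boundary.data} itself on $V$ with zero boundary data, and to close the resulting recursion across scales "by an induction on the number of scales (or by an a priori estimate)." This is circular as stated: the constant in the proposition at scale $k$ would then be defined in terms of the constants at all smaller scales, and you have not shown that this recursion stabilizes. Moreover, the discount you invoke is $\lambda_s^{-1}(V)\leq 3^{-2sk}\lambda_s^{-1}(\cu_0)$ with $k\leq 0$, so the factor $3^{-2sk}\geq1$ is not a gain; in the scale sum it exactly cancels the weight $3^{2sk}$, producing a divergent sum over $k\leq0$ unless the local $\|\mathbf f\|_{\underline{H}^s(V)}^2$ contributions themselves carry an extra decaying factor — something you would have to extract and track through the induction. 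The paper avoids this entirely by a self-contained, single-scale argument: test the local equation for $\rho(V)$ to obtain $\|\s^{1/2}\nabla\rho(V)\|_{L^2(V)}^2\leq\|\mathbf f\|_{\underline{H}^s(V)}\|\nabla\rho(V)\|_{\Hminusuls{-s}(V)}$ via $H^s$--$H^{-s}$ duality (not weighted $L^2$ duality), write $\|\nabla\rho(V)\|_{\Hminusuls{-s}(V)}\leq\|\nabla w\|_{\Hminusuls{-s}(V)}+\|\nabla(w-\rho(V))\|_{\Hminusuls{-s}(V)}$, and control the second term by Proposition~\ref{p.coarse.poincare} applied to $w-\rho(V)$, which \emph{does} lie in $\mathcal A(V)$. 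After two Young-inequality absorptions (one at each sub-cube, one in the full scale sum) the estimate closes with a single constant depending only on $(U,d,s)$. You should replace your recursive step with this argument, or else work out explicitly the induction hypothesis and the geometric decay needed to show the constants stay bounded — at present that step is not established.
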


\begin{proof}
We first consider the Dirichlet problem. By linearity we can write~$u = v+w$ where~$v$ solves~\eqref{e.PDE.problem} with~$\mathbf{f}=0$ and~$w$ solves~\eqref{e.PDE.problem} with~$h=0$. We then estimate the energy of~$u$ by the triangle inequality.

\smallskip
\emph{Step 1:}
We first consider~$v$ and test the equation for~$v$ with~$v-h$ (justified as in Section~\ref{s.coarse.graining}) to obtain
\begin{align}
\label{e.test.energy}
\fint_{\cu_0} \nabla v \cdot \s\nabla v & = \fint_{\cu_0} \a\nabla v \cdot \nabla h \leq \|\a\nabla v\|_{\Hminusuls{-s}(\cu_0)}\|\nabla h\|_{\underline{H}^s(\cu_0)} \leq C\Lambda_s^{\nicefrac12}(\cu_0)\|\s^{\nicefrac12}\nabla v\|_{\L^2(\cu_0)} \|\nabla h\|_{\underline{H}^s(\cu_0)}\,,
\end{align}
using Proposition~\ref{p.coarse.poincare}. Putting these bounds into~\eqref{e.test.energy} and re-absorbing the energy term we conclude that
\begin{equation}
\label{e.no.RHS}
\|\s^{\nicefrac12}\nabla v\|_{\L^2(\cu_0)} \leq C\Lambda_{s}^{\nicefrac12}(\cu_0) \|\nabla h\|_{\underline{H}^s(\cu_0)}\,.
\end{equation}

\smallskip
\emph{Step 2:}
We now consider~$w$ and test the equation for~$w$ with itself to obtain
\begin{align*}
\fint_{\cu_0} \nabla w \cdot \s\nabla w & = -\fint_{\cu_0} \nabla w \cdot \mathbf{f} \leq \|\mathbf{f}\|_{\underline{H}^s(\cu_0)}\|\nabla w\|_{\Hminusuls{-s}(\cu_0)} \\
& \leq C\lambda_{s-\epsilon}^{-1}(\cu_0)\|\mathbf{f}\|^2_{\underline{H}^{s}(\cu_0)} + C\lambda_{s-\epsilon}^{-\nicefrac12}(\cu_0) \|\mathbf{f}\|_{\underline{H}^{s}(\cu_0)}\|\s^{\nicefrac12}\nabla w\|_{\L^2(\cu_0)}\,.
\end{align*}
Using Young's inequality again to absorb the energy term on the right-hand side we conclude our proof for the Dirichlet problem.

\smallskip
\emph{Step 3:}
In the case of the Neumann problem~\eqref{e.PDE.Neumann} the proof is nearly identical. Testing the equation with the solution we obtain
\begin{align*}
\fint_{\cu_0} \nabla u \cdot \s \nabla u & = \fint_{\cu_0} \mathbf{f} \cdot \nabla u\leq \|\mathbf{f}\|_{\underline{H}^{s}(\cu_0)} \|\nabla u\|_{\Hminusuls{-s}(\cu_0)} \\
& \leq C\lambda_{s-\epsilon}^{-1}(\cu_0)\|\mathbf{f}\|^2_{\underline{H}^{s}(\cu_0)} + C\lambda_{s-\epsilon}^{-\nicefrac12}(\cu_0) \|\mathbf{f}\|_{\underline{H}^{s}(\cu_0)}\|\s^{\nicefrac12}\nabla u\|_{\L^2(\cu_0)}\,.
\end{align*}
We then conclude by using Young's inequality to re-absorb the energy term on the right-hand side.
\end{proof}

The energy estimates in Proposition~\ref{p.boundary.data} allow us to solve the PDE~\eqref{e.PDE.problem} with data~$\mathbf{f}\in H^s(\cu_0)$ and~$h\in H^{1+s}(\cu_0)$ provided that the coarse-grained ellipticity constants~$\Lambda_s(\cu_0)$ and~$\lambda_s^{-1}(\cu_0)$ are finite (with~$s$ replaced by~$s-\epsilon$ for any~$\epsilon >0$). We briefly sketch the argument here. By linearity consider the cases~$h=0$ and~$\mathbf{f}=0$ separately. If~$\mathbf{f}\in H^s(\cu_0)$ we take smooth approximations~$\mathbf{f}_j \to \mathbf{f}$ in~$H^s(\cu_0)$ and let~$u_j$ solve~$-\nabla \cdot \a\nabla u_j = \nabla \cdot \mathbf{f}_j$ with zero Dirichlet data. By~\eqref{e.Dirichlet.energy} we have
\begin{equation*}
\|\s^{\nicefrac12}\nabla (u_j - u_k)\|_{\L^2(\cu_0)} \leq C\lambda_{s-\epsilon}^{-\nicefrac12}(\cu_0)\|\mathbf{f}_j - \mathbf{f}_k\|_{\underline{H}^s(\cu_0)}\,,
\end{equation*}
and therefore~$u_j$ is a Cauchy sequence in~$H^1_{s,0}(\cu_0)$. The limit~$u \in H^1_{s,0}(\cu_0)$ then satisfies~$-\nabla \cdot \a\nabla u = \nabla \cdot \mathbf{f}$ by passing to the limit in the weak formulation of the equation for~$u_j$. Similarly, if~$h_j \to h$ in~$H^{1+s}(\cu_0)$ and~$v_j$ solves~$-\nabla \cdot \a\nabla v_j = 0$ with boundary data~$h_j$ then
\begin{equation*}
\|\s^{\nicefrac12} \nabla (v_j-v_k)\|_{\L^2(\cu_0)}  \leq C\bigl( |\b(\cu_0)|^{\nicefrac12} + \Lambda_{s}^{\nicefrac12}(\cu_0)\bigr) \|\nabla (h_j-h_k)\|_{\underline{H}^s(\cu_0)}\,.
\end{equation*}
The sequence~$v_j$ then has a limit~$v\in H^1_{\s}(\cu_0)$, which solves~$-\nabla \cdot \a\nabla v = 0$ by passing to the limit in the weak formulation of the problem for~$v_j$. To check that we have the correct boundary data we compute
\begin{align*}
\|\mathrm{Tr}(v-h)\|_{L^1(\partial \cu_0)} & \leq \lim_{j\to\infty} \bigl( \|\mathrm{Tr}(v_j-v)\|_{L^1(\partial \cu_0)} + \|\mathrm{Tr}(h_j-h)\|_{L^1(\partial \cu_0)} \bigr)\\
& \leq C\lim_{j\to\infty}\bigl( \|\s^{\nicefrac12}\nabla (v_j-v)\|_{\L^2(\cu_0)} + \|\nabla (h_j - h)\|_{\L^2(\cu_0)} \bigr) = 0\,.
\end{align*}

\section{Convergence of the coarse-grained matrices}
\label{s.ellip.converge}

In Section~\ref{s.coarse.graining} we showed that we can define the coarse-grained coefficients and make sense of the Dirichlet problem under our assumptions on the coefficient fields. In Section~\ref{ss.ergodic} we apply the ergodic theorem to obtain the homogenized matrices~$\shom,\shom_*$ and~$\khom$. In Section~\ref{ss.get.ahom} we show that the homogenization error~$\shom - \shom_*$ vanishes.

\subsection{Ergodic theorem}
\label{ss.ergodic}

In this section we apply the ergodic theorem to the coarse-grained coefficients to define our candidate for the homogenized matrix~$\ahom$. By~\eqref{e.in.L1}, stationarity, and the ellipticity bounds in~\eqref{e.cg.mat.bound}, the expectations of the coarse-grained matrices are finite. We can then define 
\begin{align}
\label{e.def.homs}
\left\{
\begin{aligned}
& \shom_*(U) \coloneqq \mathbb{E}[\s_*^{-1}(U)]^{-1}\,, \\
& \khom (U) \coloneqq \shom_*(U)\mathbb{E}[\s_*^{-1}(U)\k(U)]\,, \\
& \shom(U) \coloneqq \mathbb{E}[\s(U) + \k^t(U)\s_*^{-1}(U)\k(U)] - \khom^t(U)\shom_*^{-1}(U)\khom(U)\,, \\
\end{aligned}
\right.
\end{align}
and we let
\begin{equation}
\bhom(U) \coloneqq \shom(U) + \khom^t(U) \shom_*^{-1}(U)\khom(U)\,.
\end{equation}
Although we put bars over the top of the matrices to denote the homogenized quantities, note that only the subadditive quantities~$\shom_*^{-1}(U)$ and~$\bhom(U)$ are actually the expectation of the corresponding coarse-grained matrices. The reason for our choice is that we can write
\begin{equation}
\label{e.bfAhom.def}
\bfAhom(U) \coloneqq \begin{pmatrix}
\shom(U) + \khom^t(U)\shom_*^{-1}(U)\khom(U) & -\khom^t(U)\shom_*^{-1}(U) \\ -\shom_*^{-1}(U)\khom(U) & \shom_*^{-1}(U)
\end{pmatrix}
= \mathbb{E}[\bfA(U)]\,.
\end{equation}

The following proposition is a slight modification of the Kingman subadditive ergodic theorem (for instance see~\cite[Proposition 1.11]{AK.Book}) to include a non-uniform upper bound.

\begin{proposition}
\label{p.ergodic.thm}
Suppose that~$\mathbb{P}$ is~$\mathbb{Z}^d$-stationary and ergodic. Let~$\mu$ be a stationary function that is subadditive: if~$\{U_i\}$ are a partition of~$U$ then
\begin{equation*}
\mu(U) \leq \sum_{i} \frac{|U_i|}{|U|}\mu(U_i)\,.
\end{equation*}
Suppose also that there exists a random field~$g:\mathbb{R}^d\to\mathbb{R}$ with~$\mathbb{E}[\fint_{\cu_0} g] < \infty$ such that for every bounded domain~$U\subset \mathbb{R}^d$
\begin{equation*}
0 \leq \mu(U) \leq \fint_U g\,.
\end{equation*}
Then there exists a constant~$\bar{\mu}$ such that for every bounded Lipschitz domain~$U\subseteq \mathbb{R}^d$
\begin{equation*}
\mathbb{P}\biggl[\lim_{t\to\infty}\mu(tU) = \bar{\mu} \biggr] = 1\,.
\end{equation*}
Moreover~$\bar{\mu} = \lim_{n\to\infty}\mathbb{E}[\mu(\cu_n)]$.
\end{proposition}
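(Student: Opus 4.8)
The plan is to establish the convergence first along triadic cubes via the multiparameter subadditive ergodic theorem, and then to upgrade to an arbitrary bounded domain by sandwiching $tU$ inside a fixed dilated cube and disposing of the boundary errors through the ordinary ergodic theorem applied to the integrable dominating field $g$. \textbf{Step 1 (cubes).} Since $0\le\mu(\cu_n)\le\fint_{\cu_n}g$ and $\mathbb{E}[\fint_{\cu_n}g]=\mathbb{E}[\fint_{\cu_0}g]<\infty$ by stationarity, each $\mu(\cu_n)$ is integrable; and subadditivity applied to the partition of $\cu_{n+1}$ into $3^d$ translated copies of $\cu_n$ gives $\mathbb{E}[\mu(\cu_{n+1})]\le\mathbb{E}[\mu(\cu_n)]$, so $\bar\mu\coloneqq\lim_{n\to\infty}\mathbb{E}[\mu(\cu_n)]=\inf_n\mathbb{E}[\mu(\cu_n)]$ exists in $[0,\infty)$. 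The multiparameter subadditive ergodic theorem --- whose proof, e.g.\ that of \cite[Proposition~1.11]{AK.Book}, uses only the integrability of $\mu(\cu_0)$ and not a uniform upper bound --- then yields $\mu(\cu_n)\to\bar\mu$ $\mathbb{P}$-a.s., and more generally $\mu(t\cu_m)\to\bar\mu$ $\mathbb{P}$-a.s.\ for every fixed $m$ as $t\to\infty$. The ``moreover'' clause follows immediately: the dominating sequence $\fint_{\cu_n}g$ converges $\mathbb{P}$-a.s.\ and in $L^1$ to $\mathbb{E}[\fint_{\cu_0}g]$ by the ergodic theorem for $g$, so generalized dominated convergence gives $\mathbb{E}[\mu(\cu_n)]\to\bar\mu$.

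\textbf{Step 2 (upper bound for general $U$).} Fix a bounded domain $U$ with $|\partial U|=0$ and $m\in\mathbb{N}$. Partition $tU$ into the triadic $m$-cubes it contains together with a leftover set lying in a width-$C3^m$ collar $B_{t,m}$ of $\partial(tU)$; subadditivity gives
\begin{equation*}
\mu(tU)\le\frac{3^{md}}{|tU|}\sum_{z\in3^m\mathbb{Z}^d,\,z+\cu_m\subseteq tU}\mu(z+\cu_m)+\frac{1}{|tU|}\int_{B_{t,m}}g\,.
\end{equation*}
As $t\to\infty$ the first term tends to $\mathbb{E}[\mu(\cu_m)]$ by the ergodic theorem (the interior $m$-cubes exhaust a volume fraction tending to $1$; the triadic sublattice is handled in the usual way, e.g.\ by averaging over phases); and since $B_{t,m}\subseteq tW_\delta$ once $t$ is large, where $W_\delta\coloneqq\{x:\dist(x,\partial U)<\delta\}$, the ergodic theorem for $g$ gives $\limsup_{t\to\infty}\frac1{|tU|}\int_{B_{t,m}}g\le\frac{|W_\delta|}{|U|}\mathbb{E}[\fint_{\cu_0}g]$, which vanishes as $\delta\to0$. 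Hence $\limsup_{t\to\infty}\mu(tU)\le\mathbb{E}[\mu(\cu_m)]$ for every $m$, and letting $m\to\infty$ yields $\limsup_{t\to\infty}\mu(tU)\le\bar\mu$ $\mathbb{P}$-a.s., for every such $U$.

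\textbf{Step 3 (lower bound for general $U$).} This is the step I expect to be the main obstacle, since subadditivity controls $\mu$ only from above; the device is to reuse the \emph{unconditional} bound of Step 2 on the complement. After a translation (harmless for the a.s.\ statement) we may assume $\bar U\subseteq\cu_{n_0}$, and set $\gamma\coloneqq|\cu_{n_0}|/|U|\ge1$. Partitioning $t\cu_{n_0}$ into $tU$ and $t(\cu_{n_0}\setminus U)$ and applying subadditivity,
\begin{equation*}
\mu(t\cu_{n_0})\le\gamma^{-1}\mu(tU)+(1-\gamma^{-1})\mu\bigl(t(\cu_{n_0}\setminus U)\bigr)\,,\qquad\text{hence}\qquad\mu(tU)\ge\gamma\,\mu(t\cu_{n_0})-(\gamma-1)\,\mu\bigl(t(\cu_{n_0}\setminus U)\bigr)\,.
\end{equation*}
By Step 1, $\mu(t\cu_{n_0})\to\bar\mu$; by Step 2 applied to the bounded set $\cu_{n_0}\setminus U$ (which also has null boundary), $\limsup_{t\to\infty}\mu(t(\cu_{n_0}\setminus U))\le\bar\mu$. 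Since $\mu\ge0$, taking $\liminf$ and using $\liminf(a_t-b_t)\ge\liminf a_t-\limsup b_t$ gives $\liminf_{t\to\infty}\mu(tU)\ge\gamma\bar\mu-(\gamma-1)\bar\mu=\bar\mu$ $\mathbb{P}$-a.s. Combined with Step 2 this proves $\mu(tU)\to\bar\mu$ $\mathbb{P}$-a.s., which completes the proof.

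The place where the non-uniform upper bound $\mu(V)\le\fint_V g$ is genuinely needed is the boundary estimate in Step 2 (and, through it, Step 3): a uniform bound $C$ would only give $C\cdot|\text{collar}|=o(|tU|)$, whereas here one must bound $\int_{\text{collar}}g$, which is still $o(|tU|)$ precisely because $g$ is integrable and the ergodic theorem applies to it --- this is exactly the modification over a uniform-bound subadditive ergodic theorem. The remaining care --- taking all the a.s.\ limits simultaneously over $m\in\mathbb{N}$ and along the relevant (dilated, possibly shifted) families of cubes, and the triadic-sublattice averaging --- is routine, handled by intersecting countably many full-measure events together with the standard maximal-inequality arguments of \cite{AK.Book}.
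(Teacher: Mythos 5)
Your Steps 1 and 2 track the paper's argument essentially verbatim: the monotone limit $\bar\mu$ is extracted from the subadditive expectations, the interior cubes are handled by the multiparameter Wiener ergodic theorem, and the boundary collar is disposed of by the ergodic theorem applied to the integrable majorant $g$ --- exactly where the non-uniform bound is genuinely used, as you correctly diagnose. Where you diverge is the lower bound: the paper declines to prove it, stating only that it ``is exactly as in \cite[Proposition 1.11]{AK.Book}'', whereas you supply a self-contained complementary-set device, partitioning $t\cu_{n_0}$ into $tU$ and $t(\cu_{n_0}\setminus U)$, applying subadditivity, and feeding in the convergence along cubes (Step 1) together with the $\limsup$ bound (Step 2) for the complement. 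This is a pleasant structural observation: the general-domain $\liminf$ is an automatic consequence of the cube $\liminf$ plus the $\limsup$ bound, with no new ingredient. The one spot that deserves a slightly more careful justification is the claim in Step 1 that the subadditive ergodic theorem already gives $\mu(t\cu_m)\to\bar\mu$ for real $t$ (not just triadic $t$) without any uniform bound --- what you really need for Step 3 is only the $\liminf$ direction, and that indeed comes from the classical theorem, while the $\limsup$ for non-triadic $t$ is anyway furnished by your own Step 2. Phrased that way the argument is tight; as written, your Step 1 slightly overclaims, saying that the theorem ``yields $\mu(t\cu_m)\to\bar\mu$'' in both directions, when for the upper direction at non-triadic scales you are implicitly relying on Step 2, which uses $g$. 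This is a presentational rather than substantive issue, and the overall proof is sound; the trade-off versus the paper's route is simply that you get a fully self-contained proof at the cost of the extra Step 3 manipulation, whereas the paper leans on the cited reference for the $\liminf$ half.
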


\begin{proof}
\emph{Step 1: Identification of~$\bar{\mu}$.}
For any~$n\in\mathbb{N}$ the random field
\begin{equation*}
f_n \coloneqq \sum_{z\in 3^n\mathbb{Z}^d} \mu(z+\cu_n)\indc_{z+\cu_n}
\end{equation*}
is stationary with respect to~$3^n\mathbb{Z}^d$ translations with~$\mathbb{E}[ \fint_U f_n ] = \mathbb{E}[\mu(\cu_n)]$ for any domain~$U$. By the multi-parameter Wiener ergodic theorem (see~\cite[Proposition 1.7]{AK.Book}) for any domain~$U$
\begin{equation*}
\limsup_{t\to\infty} \biggl| \fint_{tU} f_n - \mathbb{E}[\mu(\cu_n)] \biggr| = 0 \quad \mbox{almost surely}\,.
\end{equation*}
By subadditivity and stationarity the sequence~$\mathbb{E}[\mu(\cu_n)]$ is monotone decreasing and therefore there exists a limit~$\bar{\mu}$.

\emph{Step 2: Convergence.}
Given any bounded domain~$U$ and~$n\in\mathbb{N}$ we let
\begin{equation*}
V_n(U) : = \bigl\{ z+\cu_n: z\in 3^n\mathbb{Z}^d\,, z+\cu_n \subseteq U\bigr\}
\end{equation*}
be the subset of cubes of size~$\cu_n$ contained in~$U$. Then by sub-additivity
\begin{align*}
\mu(tU) & \leq \frac{|V_n(tU)|}{|tU|}\mu(V_n(tU)) + \frac{|tU\setminus V_n(tU)|}{|tU|}\mu(tU\setminus V_n(tU))\,.
\end{align*}
The first term we bound by sub-additivity as
\begin{align*}
\frac{|V_n(tU)|}{|tU|}\mu(V_n) & \leq \frac{|V_n(tU)|}{|tU|} \avsum_{z\in V_n(tU)}\mu(z+\cu_n) = \frac{|V_n(tU)|}{|tU|} \fint_{V_n(tU)} f_n \leq \fint_{tU} f_n\,,
\end{align*}
while the second is bounded using
\begin{align*}
\frac{|tU\setminus V_n(tU)|}{|tU|}\mu(tU\setminus V_n(tU)) \leq \frac{|tU\setminus V_n(tU)|}{|tU|} \fint_{tU \setminus V_n(tU)} g = \frac{1}{|tU|}\int_{tU \setminus V_n(tU)} g\,.
\end{align*}
If~$\epsilon > 0$ is given,~$n\in\mathbb{N}$ is fixed and~$U_\epsilon \coloneqq \{ x\in U : \mathrm{dist}(x,\partial U) < \epsilon\}$ then there exists~$T(\epsilon,n)$ large enough that when~$t\geq T$ we have~$tU\setminus V_n(tU) \subseteq t U_\epsilon$. Then
\begin{equation*}
\frac{1}{|tU|}\int_{tU \setminus V_n(tU)} g\leq \frac{1}{|tU|}\int_{tU_{\epsilon}} g = \frac{|tU_{\epsilon}|}{|tU|}\fint_{tU_{\epsilon}} g = \frac{|U_{\epsilon}|}{|U|}\fint_{tU_{\epsilon}} g\,.
\end{equation*}
Putting our estimates together we obtain that almost surely
\begin{equation*}
\limsup_{t\to\infty}\mu(tU) \leq \limsup_{t\to\infty}\fint_{tU}f_n + \frac{|U_{\epsilon}|}{|U|}\limsup_{t\to\infty} \fint_{tU_{\epsilon}} g = \mathbb{E}[\mu(\cu_n)] + \frac{|U_{\epsilon}|}{|U|}\mathbb{E}\biggl[ \fint_{\cu_0} g \biggr]\,,
\end{equation*}
since~$g$ satisfies the multi-parameter Wiener ergodic theorem. Taking~$\epsilon \to 0$ and~$n\to\infty$ we conclude that
\begin{equation*}
\limsup_{t\to\infty}\mu(tU) \leq \bar{\mu}\,.
\end{equation*}
The lower bound~$\liminf_{t\to\infty} \mu(tU) \geq \bar{\mu}$ is exactly as in~\cite[Proposition 1.11]{AK.Book}.
\end{proof}

We now use finiteness of the first moments in~\ref{a.finite.E} to apply the ergodic theorem to the coarse-grained matrices. At this point we do not yet need the coarse-grained ellipticity assumption~\ref{a.ellipticity}.

\begin{proposition}
\label{p.Ahom}
There exists a deterministic matrix~$\bfAhom$ such that for any bounded Lipschitz domain~$U$
\begin{equation*}
\lim_{t\to\infty}\bfA(tU) = \bfAhom \quad \mbox{almost surely}\,.
\end{equation*}
and there exist deterministic matrices~$\shom,\shom_*^{-1}$ and~$\khom$ such that~$\bfAhom$ has the form
\begin{equation}
\label{e.ergodic}
\bfAhom = \begin{pmatrix}
\shom + \khom^t\shom_*^{-1}\khom & -\khom^t\shom_*^{-1} \\
-\shom_*^{-1}\khom & \shom_*^{-1}
\end{pmatrix}\,.
\end{equation}
Moreover,~$\bfAhom,\shom$ and~$\shom_*$ are the monotone limits of~$\bfAhom(\cu_n),\shom(\cu_n)$ and~$\shom_*(\cu_n)$ respectively, we have the bounds
\begin{equation}
\label{e.ellip.hom}
\mathbb{E}\biggl[ \fint_{\cu_0} \s^{-1} \biggr]^{-1} \leq \shom_* \leq \shom \leq \shom+\khom^t\shom_*^{-1}\khom \leq \mathbb{E} \biggl[\fint_{\cu_0} \s + \k^t\s^{-1}\k \biggr]
\end{equation}
and
\begin{equation}
\label{e.sym.khom}
-(\shom - \shom_*) \leq \frac{1}{2}(\khom + \khom^t) \leq \shom - \shom_*\,.
\end{equation}
\end{proposition}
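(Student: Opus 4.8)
The plan is to produce $\bfAhom$ by applying the subadditive ergodic theorem of Proposition~\ref{p.ergodic.thm} to the scalar quadratic forms built from the coarse-grained matrix $\bfA(U)$, and then to read off $\shom$, $\shom_*$ and $\khom$ from the blocks of the limit. Two structural facts drive everything, both read off from~\eqref{e.bfA.def} and Proposition~\ref{p.cg.mat}: the matrix $\bfA(U)$ is \emph{symmetric} --- its off-diagonal blocks $-\k^t(U)\s_*^{-1}(U)$ and $-\s_*^{-1}(U)\k(U)$ are transposes of one another since $\s_*^{-1}(U)$ is symmetric --- and it is \emph{positive definite}, since the Schur complement of its bottom-right block $\s_*^{-1}(U)$ equals $\s(U) > 0$. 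In particular $\xi \mapsto \xi\cdot\bfA(U)\xi$ determines $\bfA(U)$ completely, so convergence of these quadratic forms will upgrade to convergence of the matrix.

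First I would fix $\xi \in \mathbb{R}^{2d}$ and set $\mu(U) \coloneqq \xi\cdot\bfA(U)\xi$, which is nonnegative, $\mathbb{Z}^d$-stationary, subadditive by~\eqref{e.bfA.subadd} in the Loewner ordering, and dominated by $\fint_U g$ for $g(x) \coloneqq \xi\cdot\bfA(x)\xi$ thanks to~\eqref{e.bfAs.ordered}, where $\bfA(x)$ is the pointwise matrix~\eqref{e.A.def}. Pointwise positive definiteness of $\bfA$ forces its off-diagonal entries to be controlled by its diagonal blocks, so~\eqref{e.in.L1} gives $\mathbb{E}[\fint_{\cu_0}\bfA] < \infty$, hence $\mathbb{E}[\fint_{\cu_0}g] < \infty$. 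Proposition~\ref{p.ergodic.thm} then yields, for every bounded domain $U$, almost sure convergence of $\mu(tU)$ to $\lim_{n\to\infty}\mathbb{E}[\xi\cdot\bfA(\cu_n)\xi]$, a quantity independent of $U$. Taking $\xi$ over a countable dense set and using continuity of the quadratic form, the symmetric matrices $\bfA(tU)$ converge almost surely to a deterministic symmetric matrix $\bfAhom = \lim_{n\to\infty}\mathbb{E}[\bfA(\cu_n)] = \lim_{n\to\infty}\bfAhom(\cu_n)$; moreover $\bfAhom(\cu_n)\downarrow\bfAhom$, since $\mathbb{E}[\bfA(\cu_{n+1})]\le\mathbb{E}[\bfA(\cu_n)]$ by~\eqref{e.bfA.subadd} and stationarity.

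Next I would extract the blocks. Take $\shom_*^{-1}$ to be the bottom-right block of $\bfAhom$; by~\eqref{e.cg.mat.bound} we have $\s_*^{-1}(tU)\ge(\fint_{tU}\s^{-1})^{-1}$, and the right-hand side converges by the ergodic theorem to the positive definite matrix $\mathbb{E}[\fint_{\cu_0}\s^{-1}]^{-1}$, so $\shom_*^{-1}$ is invertible and $\s_*(tU)\to\shom_*\coloneqq(\shom_*^{-1})^{-1}$. Defining $\khom$ by asking that $-\shom_*^{-1}\khom$ be the bottom-left block of $\bfAhom$, and $\shom$ to be the Schur complement of the bottom-right block --- equivalently the top-left block minus $\khom^t\shom_*^{-1}\khom$ --- recovers the form~\eqref{e.ergodic} and matches the definitions~\eqref{e.def.homs}--\eqref{e.bfAhom.def}, and gives $\s(tU)\to\shom$, $\k(tU)\to\khom$ almost surely. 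For the monotonicity claims: $\bfAhom(\cu_n)\downarrow\bfAhom$ forces $\shom_*^{-1}(\cu_n)\downarrow$, hence $\shom_*(\cu_n)\uparrow\shom_*$, while the Schur complement is monotone in its argument, so $\shom(\cu_n)\downarrow\shom$.

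The bounds~\eqref{e.ellip.hom} come from passing~\eqref{e.cg.mat.bound} to the limit along $tU$: the outer terms converge by the ergodic theorem to $\mathbb{E}[\fint_{\cu_0}\s^{-1}]^{-1}$ and $\mathbb{E}[\fint_{\cu_0}(\s+\k^t\s^{-1}\k)]$ (or one can use $\mathbb{E}[\b(\cu_n)]\le\mathbb{E}[\fint_{\cu_0}(\s+\k^t\s^{-1}\k)]$ and let $n\to\infty$), the middle terms $\s_*(tU)\le\s(tU)\le\b(tU)$ converge almost surely, and $\shom\le\shom+\khom^t\shom_*^{-1}\khom$ is automatic from $\shom_*^{-1}>0$. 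For~\eqref{e.sym.khom}, since $0\in\mathcal{A}(U)$ and $0\in\mathcal{A}_*(U)$ one has $J(U,p,q)\ge0$ and $J^*(U,p,q')\ge0$ for all $p,q,q'$; minimizing the explicit expressions~\eqref{e.Jmat} and~\eqref{e.Jstar.mat} over the second variable, at $q=(\s_*(U)-\k(U))p$ and $q'=(\s_*(U)+\k(U))p$ respectively, leaves $\tfrac12 p\cdot(\s(U)-\s_*(U))p\pm p\cdot\k(U)p\ge0$, i.e.\ $-\tfrac12(\s(U)-\s_*(U))\le\tfrac12(\k(U)+\k^t(U))\le\tfrac12(\s(U)-\s_*(U))$; letting $t\to\infty$ along $tU$ gives~\eqref{e.sym.khom} (in fact with an extra factor $\tfrac12$). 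The only genuinely nonroutine point is the first two steps --- converting the scalar ergodic theorem into convergence of the full matrix and thereby identifying $\shom$, $\shom_*$, $\khom$ --- which hinges on the symmetry and positive definiteness of $\bfA(U)$, the choice of dominating field $g$, and the uniform lower bound on $\s_*^{-1}(tU)$ ensuring $\shom_*^{-1}$ is invertible; everything else is algebra from Proposition~\ref{p.cg.mat} together with these limits.
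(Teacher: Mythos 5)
Your proof is correct, and it follows the same broad plan as the paper (apply the subadditive ergodic theorem to the coarse-grained matrix $\bfA(U)$, read off $\shom,\shom_*,\khom$ from the blocks, and pass the pointwise bounds to the limit), but it deviates in several ingredients in a way worth comparing. First, you scalarize via $\mu(U)=\xi\cdot\bfA(U)\xi$ and exhibit the dominating field $g=\xi\cdot\bfA\xi$ explicitly, then upgrade to matrix convergence by symmetry and a spanning set of $\xi$'s; the paper simply asserts that the first statement is ``an immediate corollary'' of Proposition~\ref{p.ergodic.thm}, so your care here fills in a genuine gap in exposition (the ergodic theorem as stated is scalar). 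Second, for the monotonicity of $\shom(\cu_n)$ you invoke the variational characterization $y\cdot(A-BC^{-1}B^t)y=\inf_x (y,x)\cdot M (y,x)$ to conclude that the Schur complement is operator-monotone, so $\bfAhom(\cu_n)\downarrow$ forces $\shom(\cu_n)\downarrow$; the paper instead computes $\mathbb{E}[J(\cu_m,p,-\khom(\cu_n)p)]+\mathbb{E}[J^*(\cu_m,p,\khom(\cu_n)p)]$, uses subadditivity of the expectations of $J,J^*$, and reads off $\shom(\cu_m)\le\shom(\cu_n)$ from the resulting identity. Your route is slicker and avoids introducing the adjoint problem at this stage; the paper's route is more explicit and reuses machinery it already sets up for the next estimate. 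Third, for~\eqref{e.sym.khom} you work with the pathwise inequalities $J(U,p,(\s_*(U)-\k(U))p)\ge0$ and $J^*(U,p,(\s_*(U)+\k(U))p)\ge0$, whose data-dependent choices of $q,q'$ cancel exactly and give the two-sided bound with the stronger constant $\tfrac12$, then let $t\to\infty$ using the almost-sure convergence $\s(tU)\to\shom$, $\s_*(tU)\to\shom_*$, $\k(tU)\to\khom$ (which you justify via invertibility of the limiting bottom-right block, a point the paper takes for granted); the paper instead chooses deterministic $q=(\shom_*(\cu_n)\mp\khom(\cu_n))p$ and works with $\mathbb{E}[J],\mathbb{E}[J^*]$. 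Both are valid, and your pathwise version gives a cleaner constant. The only thing to be careful about, which you did handle, is that extracting $\shom_*$ from $\shom_*^{-1}$ requires knowing $\shom_*^{-1}$ is invertible, for which the lower bound $\s_*^{-1}(tU)\ge(\fint_{tU}\s^{-1})^{-1}\to\mathbb{E}[\fint_{\cu_0}\s^{-1}]^{-1}>0$ from~\eqref{e.cg.mat.bound} is exactly what is needed.
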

\begin{proof}
The first statement is an immediate corollary of Proposition~\ref{p.ergodic.thm}, which also implies that~$\bfAhom$ is the monotone limit of~$\bfAhom(\cu_n)$. If we label the upper-left block of~$\bfAhom$ as~$\bhom$ and the lower right-block as~$\shom_*^{-1}$ then we immediately obtain the monotone convergence of~$\bhom(\cu_n)$ and~$\shom_*^{-1}(\cu_n)$ to~$\bhom$ and~$\shom_*^{-1}$ respectively. From convergence of the off-diagonal entries, namely that there exists~$\bfAhom_{21}$ such that~$-\shom_*^{-1}(\cu_n)\khom(\cu_n) \to \bfAhom_{21}$, it follows that
\begin{equation*}
\khom(\cu_n) \to \khom \coloneqq -\shom_*\bfAhom_{21}\,,
\end{equation*}
In order to see that~$\shom(\cu_n)$ is monotone we compute for any~$m\in\mathbb{N}$
\begin{align*}
\mathbb{E}[J(\cu_m),p,-\khom(\cu_n)p)] & + \mathbb{E}[J^*(\cu_m,p,\khom(\cu_n)p)] \\
& = p \cdot \shom(\cu_m)p + (\khom(\cu_m)-\khom(\cu_n))p \cdot \shom_*^{-1}(\cu_m) (\khom(\cu_m) - \khom(\cu_n))p\,.
\end{align*}
Now by stationarity and subadditivity, for any~$m\geq n$
\begin{equation*}
\mathbb{E}[J(\cu_m),p,q)]  + \mathbb{E}[J^*(\cu_m,p,q')]  \leq \mathbb{E}[J(\cu_n),p,q)] + \mathbb{E}[J^*(\cu_n,p,q')]
\end{equation*}
so in particular with our choice of~$q = -\khom(\cu_n)p$ and~$q'=\khom(\cu_n)p$ we get
\begin{equation*}
\shom(\cu_m) + (\khom(\cu_m) - \khom(\cu_n))^t \shom_*^{-1}(\cu_m) (\khom(\cu_m)-\khom(\cu_n)) \leq \shom(\cu_n)\,,
\end{equation*}
which implies that~$\shom(\cu_m) \leq \shom(\cu_n)$. It follows that the limit~$\shom$ exists, and
\begin{align*}
\shom = \lim_{n\to\infty}\shom(\cu_n) = \lim_{n\to\infty} \bigl(\bhom(\cu_n) - \khom^t(\cu_n)\shom_*^{-1}(\cu_n)\khom(\cu_n)\bigr) = \bhom - \khom^t\shom_*\khom\,,
\end{align*}
so that~$\bfA$ has the form in~\eqref{e.ergodic}.

To obtain the ordering~$\shom_*(\cu_n) \leq \shom(\cu_n)$, if we take the expectation of~$J(U,p,q)$ in~\eqref{e.Jmat} and~$J^*(U,p,q')$ in~\eqref{e.Jstar.mat} with any~$p\in\mathbb{R}^d$ and the choices~$q = (\shom(\cu_n) - \khom(\cu_n))p$ and~$q' = (\shom(\cu_n) - \khom(\cu_n))p$ then 
\begin{align*}
0 \leq \mathbb{E}[J(\cu_n,p,(\shom(\cu_n) - \khom(\cu_n))p)] + \mathbb{E}[J^*(\cu_n,p, (\shom(\cu_n) + \khom(\cu_n)p)] = \frac{1}{2}p \cdot \bigl(\shom(\cu_n) - \shom_*(\cu_n) \bigr) p\,.
\end{align*}
Using this ordering and taking expectations of the ellipticity bounds in~\eqref{e.cg.mat.bound} we have the uniform ellipticity bounds in~\eqref{e.ellip.hom}.

Finally,
\begin{align*}
0 \leq \mathbb{E}[J(\cu_n,p,(\shom(\cu_n) - \khom(\cu_n))p)] = \frac{1}{2}p \cdot \bigl(\shom(\cu_n) - \shom_*(\cu_n) \bigr)p + p \cdot \khom(\cu_n) p
\end{align*}
and
\begin{equation*}
0 \leq \mathbb{E}[J^*(\cu_n,p,(\shom(\cu_n) + \khom(\cu_n))p)] = \frac{1}{2}p \cdot \bigl(\shom(\cu_n) - \shom_*(\cu_n) \bigr)p - p \cdot \khom(\cu_n) p
\end{equation*}
and therefore the symmetric part of~$\khom(\cu_n)$ is controlled by the difference between~$\shom(\cu_n)$ and~$\shom_*(\cu_n)$:
\begin{equation}
\label{e.kn.sym}
-(\shom(\cu_n) - \shom_*(\cu_n)) \leq \frac{1}{2}(\khom(\cu_n)+\khom^t(\cu_n)) \leq \shom(\cu_n) - \shom_*(\cu_n)\,.
\end{equation}
We obtain~\eqref{e.sym.khom} by taking a limit.
\end{proof}

Although we now have convergence of the coarse-grained matrices, we require two more steps in order to prove homogenization. First, in~$\shom$ and~$\shom_*$ we have an upper and lower bound for the symmetric part of the homogenized matrix, but we do not yet know that they are equal. Second, to prove homogenization of the Dirichlet problem we need to sum the effect of the coarse-grained matrices over every scale, and we do not know that this effect is finite. It is for this reason that we introduce the coarse-grained ellipticity constants in~\eqref{e.Lambda.def} and~\eqref{e.lambda.def}. We will tackle the first problem in Section~\ref{ss.get.ahom} and the second in Section~\ref{s.homogenize}.

\subsection{Convergence of the homogenization error}
\label{ss.get.ahom}

In Section~\ref{ss.ergodic} we showed that the coarse-grained matrices~$\s(\cu_n),\s_*^{-1}(\cu_n)$ and~$\k(\cu_n)$ converge by the ergodic theorem. Using the stronger ellipticity assumption~\ref{a.ellipticity} we are able to show that the limiting homogenization error~$\shom - \shom_*$ vanishes. The goal of this section is to prove the following proposition:
\begin{proposition}
\label{p.ahom}
Using assumption~\ref{a.ellipticity} we have
\begin{equation}
\label{e.ahom}
\shom = \shom_* \quad \mbox{and} \quad \khom = -\khom^t\,.
\end{equation}
\end{proposition}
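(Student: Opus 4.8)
The plan is to reduce \eqref{e.ahom} to the vanishing of the $\s$-energy of a distinguished family of finite-volume maximizers, and then to extract this vanishing from the subadditivity of $J$ and $J^*$ together with the coarse-grained energy estimates. \emph{Step 1 (reduction).} Completing the square in the algebraic formulas \eqref{e.Jmat} and \eqref{e.Jstar.mat} gives, for every $p\in\Rd$ and every bounded Lipschitz $U$,
\[
\min_{q\in\Rd}J(U,p,q) + \min_{q'\in\Rd}J^*(U,p,q') = p\cdot\bigl(\s(U)-\s_*(U)\bigr)p\,,
\]
the minima being attained at $q=(\s_*(U)-\k(U))p$ and $q'=(\s_*(U)+\k(U))p$; by \eqref{e.averages} these are precisely the second variables for which the maximizers have vanishing mean gradient over $U$. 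Put $q^*_\infty:=(\shom_*-\khom)p$ and $r^*_\infty:=(\shom_*+\khom)p$. Since $\min_qJ(\cu_n,p,q)\le J(\cu_n,p,q^*_\infty)$, and likewise for $J^*$, and since the left side above converges almost surely to the deterministic number $p\cdot(\shom-\shom_*)p$ by the a.s.\ convergence of the coarse-grained matrices in Proposition~\ref{p.Ahom}, it suffices to prove $J(\cu_n,p,q^*_\infty)\to 0$ and $J^*(\cu_n,p,r^*_\infty)\to 0$ almost surely. Both are nonnegative, so the sum tending to $0$ forces $p\cdot(\shom-\shom_*)p=0$ for all $p$, i.e.\ $\shom=\shom_*$, and then \eqref{e.kn.sym} forces $\khom+\khom^t=0$.

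\emph{Step 2 (the subadditivity defect vanishes).} Fix $q$. Patching the scale-$n$ maximizers over the subcubes $z+\cu_n\subseteq\cu_{n+1}$ into a competitor for the scale-$(n+1)$ problem and invoking the quadratic response \eqref{e.quad.response}, one obtains the exact identity that $\mathbb{E}[J(\cu_n,p,q)]-\mathbb{E}[J(\cu_{n+1},p,q)]$ equals one half the expected $\s$-weighted $L^2$ distance between the patched scale-$n$ maximizers and the scale-$(n+1)$ maximizer, and similarly for $J^*$. These differences telescope, with total at most $\mathbb{E}[J(\cu_0,p,q)]<\infty$ (finite by \ref{a.finite.E} and \eqref{e.cg.mat.bound}), so the fluctuation energies tend to $0$. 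Since the maximizer $v(\cu_n,p,q)$ depends affinely on $q$ and $\fint_{\cu_n}\nabla v(\cu_n,0,r)\cdot\s\nabla v(\cu_n,0,r)=r\cdot\s_*^{-1}(\cu_n)r\to0$ whenever $r\to0$, we may replace $q^*_\infty$ by the exact zero-mean choices $q^*_n:=(\s_*(\cu_n)-\k(\cu_n))p$ and work with $v_n:=v(\cu_n,p,q^*_n)$, which satisfy $\fint_{\cu_n}\nabla v_n=0$.

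\emph{Step 3 (upgrading to vanishing energy; the main obstacle).} This is the only step that uses \ref{a.ellipticity}. Steps~1--2 control only the \emph{difference} of gradients between consecutive scales, not the full energy $\fint_{\cu_n}\nabla v_n\cdot\s\nabla v_n$; bridging this gap is where the quantitative energy estimates of Proposition~\ref{p.boundary.data} enter, their constants being finite precisely because \ref{a.ellipticity} bounds $\lambda_s^{-1}(\cu_n)$ and $\Lambda_t(\cu_n)$. The plan is to combine the telescoped defect of Step~2 with Proposition~\ref{p.boundary.data} applied to differences of maximizers across scales, deducing that, after recentering, $\{v_n\}$ is locally Cauchy in $H^1_\s$ and converges to an infinite-volume $\phi$ with stationary gradient solving $-\nabla\cdot\a\nabla\phi=0$; since $\fint_{\cu_n}\nabla v_n=0$ the ergodic theorem gives $\mathbb{E}[\nabla\phi]=0$. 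The uniform bound $\fint_{\cu_n}\nabla v_n\cdot\s\nabla v_n\lesssim|\b(\cu_n)|\lesssim 1$ transfers to $\nabla\phi\in L^2_\s(\Omega)$, and since $\a\nabla\phi$ is a stationary solenoidal field the orthogonality of mean-zero potential and solenoidal fields yields $\mathbb{E}[\nabla\phi\cdot\s\nabla\phi]=\mathbb{E}[\nabla\phi\cdot\a\nabla\phi]=\mathbb{E}[\nabla\phi]\cdot\mathbb{E}[\a\nabla\phi]=0$, whence $\nabla\phi=0$ and therefore $\fint_{\cu_n}\nabla v_n\cdot\s\nabla v_n\to\mathbb{E}[\nabla\phi\cdot\s\nabla\phi]=0$; the adjoint maximizers are handled identically. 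As $\lambda_s,\Lambda_t$ are neither monotone nor subadditive and are only controlled in the $\limsup$, one runs this argument on the probability-one event of \eqref{e.lambdas.finite}, truncating the ellipticity constants so they are uniform there; this is harmless because $\shom,\shom_*$ are deterministic.

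The hard part is Step~3: reconciling the expectation-level subadditivity estimates with the only-$\limsup$-controlled, non-subadditive coarse-grained ellipticity constants --- that is, making rigorous (via Proposition~\ref{p.boundary.data}) the statement that the finite-volume maximizers with vanishing mean gradient have vanishing energy, which is essentially the construction of a trivial infinite-volume corrector. Steps~1 and~2 are bookkeeping on top of the algebra of Proposition~\ref{p.cg.mat} and the Kingman-type machinery already in place, and do not use \ref{a.ellipticity}.
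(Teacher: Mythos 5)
Your Step~1 reproduces exactly the reduction the paper uses: completing the square in~\eqref{e.Jmat} and~\eqref{e.Jstar.mat} to obtain~\eqref{e.s.gap}, and then reducing~\eqref{e.ahom} to showing $J(\cu_n,p,(\shom_*-\khom)p)\to 0$ and $J^*(\cu_n,p,(\shom_*+\khom)p)\to 0$ almost surely. Past that point the routes diverge. The paper never constructs an infinite-volume corrector: Proposition~\ref{p.J.bound} localizes via a cutoff and a duality splitting to bound the maximizer's energy by $3^{-sn}\|\nabla v(\cu_n)\|_{\underline{H}^{-s}}\cdot 3^{-tn}\|\a\nabla v(\cu_n)\|_{\underline{H}^{-t}}$, and Propositions~\ref{p.grad.norm}--\ref{p.flux.norm} bound those negative norms scale by scale using the coarse-grained ellipticity constants together with the fluctuation quantities $\mathcal{G},\mathcal{H}$, which vanish by the ergodic theorem. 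You instead propose constructing an infinite-volume $\phi$ with stationary mean-zero gradient and invoking Helmholtz orthogonality.

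The gap is in that orthogonality step, and it is not incidental. The identity $\mathbb{E}[\nabla\phi\cdot\a\nabla\phi]=\mathbb{E}[\nabla\phi]\cdot\mathbb{E}[\a\nabla\phi]$ rests on the orthogonal decomposition of $L^2(\Omega)^d$ into constants, mean-zero potential fields, and solenoidal fields, and therefore requires $\nabla\phi\in L^2(\Omega)^d$ and $\a\nabla\phi\in L^2(\Omega)^d$. You only produce $\mathbb{E}[\nabla\phi\cdot\s\nabla\phi]<\infty$; since $\s$ has no uniform lower bound, this does not give $\nabla\phi\in L^2(\Omega)^d$, and since $\b$ has no uniform upper bound, $\a\nabla\phi\in L^2(\Omega)^d$ is unavailable as well. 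Indeed, even the assertion that $\a\nabla\phi$ is a well-defined stationary solenoidal element of the relevant function space is not established. If one tries instead to justify the orthogonality by integrating by parts over large cubes, the boundary term only vanishes under quantitative sublinearity---which again calls for the ellipticity constants, recreating the very analysis your sketch is trying to replace. In short, the coarse-grained ellipticity constants of~\ref{a.ellipticity} control negative Sobolev norms, not $L^2(\Omega)$ norms, and your orthogonality argument silently reimports the uniform ellipticity the framework is designed to dispense with. Two subsidiary gaps compound this: the passage from the telescoping of scale-to-scale defects in Step~2 to the existence of an infinite-volume $\phi$ with stationary gradient is asserted rather than argued (the maximizers $v_n$ live on different domains and the patched differences going to zero in mean do not by themselves furnish an $H^1_{\s,\mathrm{loc}}$ Cauchy sequence), and $\mathbb{E}[\nabla\phi]=0$ does not follow automatically from $\fint_{\cu_n}\nabla v_n=0$ without first securing the convergence.
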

Note that the second statement in~\eqref{e.ahom} follows from the first by~\eqref{e.sym.khom}. Our first step in the proof of Proposition~\ref{p.ahom} is to relate the problem to the energy of solutions, via the representation~\eqref{e.J.energy} in terms of~$J(U,p,q)$. By re-arranging~\eqref{e.Jmat} and~\eqref{e.Jstar.mat} and applying Proposition~\ref{p.Ahom}, for any~$p,q\in\mathbb{R}^d$,
\begin{align*}
\lim_{t\to\infty} J(tU,p,q) & = \lim_{t\to\infty} \biggl( \frac{1}{2}p \cdot \b(tU)p + \frac{1}{2}q \cdot \s_*^{-1}(tU)q + p \cdot \k^t(tU)\s_*^{-1}(tU)q - p\cdot q \biggr) \\
& = \frac{1}{2} p\cdot \bhom p + \frac{1}{2} q \cdot \shom_*^{-1} q + p \cdot \khom^t\shom_*^{-1}q - p\cdot q\,,
\end{align*}
and similarly
\begin{align*}
\lim_{t\to\infty} J^*(tU,p,q') = \frac{1}{2} p\cdot \bhom p + \frac{1}{2} q' \cdot \shom_*^{-1} q' - p \cdot \khom^t\shom_*^{-1}q' - p\cdot q'\,.
\end{align*}
If we add these together and make the choices
\begin{equation}
\label{qs.choice}
q = (\shom_* - \khom)p \quad \mbox{and} \quad q'=(\shom_* + \khom)p
\end{equation}
then simple algebraic manipulations give
\begin{align}
\label{e.s.gap}
\lim_{t\to\infty} \bigl( J(tU,p,q) + J^*(tU,p,q') \bigr) =  p \cdot \bigl(\shom - \shom_* \bigr) p\,.
\end{align}
Our goal is then to show that the limit on the left-hand side vanishes. We only need to choose one domain, and we will take~$U = \cu_0$ because we can conveniently partition the domain into subcubes. We denote the mean-zero maximizer of~\eqref{e.J.def} by~$v(U) = v(U,p,q)$ with the understanding that~$p$ and~$q$ are fixed, either arbitrarily for general propositions or as in~\eqref{qs.choice}.

\begin{proposition}
\label{p.J.bound}
There exists a constant~$C=C(d)$ such that for any~$p,q\in\mathbb{R}^d$ and~$0<s,t<1$ with~$s+t = 1$,
\begin{align*}
\limsup_{n\to\infty} J(\cu_{n-1},p,q) \leq C \limsup_{n\to\infty} \biggl( 3^{-sn}\|\nabla v(\cu_n)\|_{\underline{H}^{-s}(\cu_n)} 3^{-tn}\|\a \nabla v(\cu_n)\|_{\underline{H}^{-s}(\cu_n)}\biggr)\,.
\end{align*}
\end{proposition}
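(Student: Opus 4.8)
The plan is to estimate the subadditive quantity at large scales by a compensated–compactness (div–curl) pairing for the maximizer: $\nabla v(\cu_n)$ is curl-free and $\a\nabla v(\cu_n)$ is divergence-free, so the energy density $\nabla v(\cu_n)\cdot\a\nabla v(\cu_n)=\nabla v(\cu_n)\cdot\s\nabla v(\cu_n)$ tested against a cut-off may be controlled by a product of two negative-regularity norms whose orders sum to $1$ — which is exactly the hypothesis $s+t=1$.

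First I would reduce the left-hand side to a localized energy on $\cu_n$. By Proposition~\ref{p.Ahom} the numbers $J(\cu_m,p,q)$ converge almost surely as $m\to\infty$, so $\limsup_{n\to\infty}J(\cu_{n-1},p,q)=\lim_{n\to\infty}J(\cu_n,p,q)$; by the energy identity \eqref{e.J.energy} (and the pointwise cancellation of the skew part of $\a$ in the quadratic form), this equals $\tfrac12\lim_n\fint_{\cu_n}\nabla v(\cu_n)\cdot\a\nabla v(\cu_n)$. Fixing a cut-off $\eta_n\in C^\infty_c(\cu_n)$ with $\eta_n\equiv1$ on $\cu_{n-1}$, $0\le\eta_n\le1$ and $3^n\|\nabla\eta_n\|_{L^\infty}+3^{2n}\|D^2\eta_n\|_{L^\infty}\le C(d)$, the quantities $\fint_{\cu_n}\eta_n\,\nabla v(\cu_n)\cdot\a\nabla v(\cu_n)$ and $2J(\cu_n,p,q)$ differ only by the nonnegative energy of $v(\cu_n)$ in the shell $\cu_n\setminus\cu_{n-1}$; matching them — the reason for the one-scale gap in the statement — is a routine application of the subadditive-quantity identities \eqref{e.bfA.subadd} and \eqref{e.quad.response}, applied on the sub-cubes $z+\cu_{n-1}\subseteq\cu_n$ with competitors $v(\cu_n)-(v(\cu_n))_{z+\cu_{n-1}}$, together with the coarse-grained energy estimates of Proposition~\ref{p.boundary.data} for the shell contribution. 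It therefore suffices to bound $\fint_{\cu_n}\eta_n\,\nabla v(\cu_n)\cdot\a\nabla v(\cu_n)$ by the right-hand side.

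The core is then the div--curl estimate. Writing $E:=\nabla v(\cu_n)$ and $D:=\a\nabla v(\cu_n)$, and using $\nabla\cdot D=0$ together with the compact support of $\eta_n$, an integration by parts gives, for $c=(v(\cu_n))_{\cu_n}$,
\begin{equation*}
\fint_{\cu_n}\eta_n\,E\cdot D=-\fint_{\cu_n}\bigl(v(\cu_n)-c\bigr)\,\nabla\eta_n\cdot D\,.
\end{equation*}
One estimates the right-hand side by the duality pairing of the scale-invariant spaces of Section~\ref{ss.cg.analysis}: for $\sigma\in(0,1)$ the Leibniz rule for multiplication by the $3^{-n}$-scaled smooth vector field $\nabla\eta_n$ gives $\|(v(\cu_n)-c)\nabla\eta_n\|_{\underline H^{\sigma}(\cu_n)}\le C3^{-n}\|v(\cu_n)-c\|_{\underline H^{\sigma}(\cu_n)}$, and a fractional Poincaré inequality gives $\|v(\cu_n)-(v(\cu_n))_{\cu_n}\|_{\underline H^{\sigma}(\cu_n)}\le C3^{n}\|\nabla v(\cu_n)\|_{\underline H^{\sigma-1}(\cu_n)}$, whence
\begin{equation*}
\Bigl|\fint_{\cu_n}\eta_n\,E\cdot D\Bigr|\le C\,\|\nabla v(\cu_n)\|_{\underline H^{\sigma-1}(\cu_n)}\,\|\a\nabla v(\cu_n)\|_{\underline H^{-\sigma}(\cu_n)}\,,
\end{equation*}
the scale factors $3^{\pm n}$ cancelling because $(\sigma-1)+(-\sigma)=-1$. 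Taking $\sigma=t$ measures the gradient in $\underline H^{-s}$ and the flux in $\underline H^{-t}$; the stated form, with both at order $-s$ and the prefactors $3^{-sn}$ and $3^{-tn}$, then follows after comparing the $\underline H^{-t}$ and $\underline H^{-s}$ norms of the divergence-free field $\a\nabla v(\cu_n)$ via the negative-norm and coarse Poincaré estimates of Section~\ref{ss.cg.analysis}; when $s>t$ one instead integrates by parts onto a skew-symmetric stream potential $A$ of $D$ (using that $\nabla^2 v(\cu_n):A=0$ pointwise by symmetry versus skew-symmetry), which exchanges the roles of $s$ and $t$.

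The delicate part I expect to be this last step: carrying out the div--curl estimate in the scale-discounted (Besov-type) spaces with the sharp $3^{\pm n}$ scale factors, and arranging the integration by parts and the choice of intermediate exponent $\sigma$ so that both negative-regularity norms in the final bound can be taken at order $-s$ with the indicated prefactors. The reduction of the first step is, by contrast, a matter of reshuffling the subadditive-quantity identities already established in Section~\ref{s.coarse.graining}.
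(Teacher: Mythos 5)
Your proposal follows the paper's proof quite closely: the reduction to a localized energy of~$v(\cu_n)$, the integration by parts against a smooth cut-off, and the scale-invariant duality estimate on~$v(\cu_n)\nabla\varphi$ are exactly the ingredients used (the paper even packages your Leibniz-plus-fractional-Poincar\'e step as a single lemma, Proposition~\ref{p.cutoff.func}). Two issues, though, one of substance and one of bookkeeping.

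On the reduction step, the claim that~$2J(\cu_n,p,q)$ and~$\fint_{\cu_n}\eta_n\,\nabla v(\cu_n)\cdot\a\nabla v(\cu_n)$ can be ``matched'' (because they differ by the shell energy, which you expect to control) does not work as stated: by stationarity the shell energy density is comparable to the interior energy density, so the shell contribution does \emph{not} tend to zero. The paper avoids this by never trying to compare the full energy of~$v(\cu_n)$ with its truncation. Instead it bounds the object $J(\cu_{n-1},p,q)$ directly, using the pointwise inequality~$\tfrac12a^2\le b^2+(a-b)^2$:
\begin{equation*}
J(\cu_{n-1},p,q)\le\fint_{\cu_{n-1}}\nabla v(\cu_n)\cdot\s\nabla v(\cu_n)+\fint_{\cu_{n-1}}\bigl(\nabla v(\cu_{n-1})-\nabla v(\cu_n)\bigr)\cdot\s\bigl(\nabla v(\cu_{n-1})-\nabla v(\cu_n)\bigr)\,,
\end{equation*}
then controls the second term by the quadratic response~\eqref{e.quad.response} on sub-cubes and the subadditivity defect~$\avsum_{z} J(z+\cu_{n-1},p,q)-J(\cu_n,p,q)\to0$, and bounds the first term by~$C\fint_{\cu_n}\varphi\nabla v(\cu_n)\cdot\s\nabla v(\cu_n)$ since~$\varphi\equiv1$ on~$\cu_{n-1}$. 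This is the reduction you should adopt; it requires only what you already cited but uses it the other way around (bounding the maximizer on~$\cu_{n-1}$ by the restriction of the maximizer on~$\cu_n$).

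On your final paragraph: the detour through a comparison of the~$\underline H^{-t}$ and~$\underline H^{-s}$ norms of the flux, and the stream-potential variant for~$s>t$, are unnecessary and should be dropped. The statement contains a typo: the flux norm should read~$\underline H^{-t}(\cu_n)$, not~$\underline H^{-s}(\cu_n)$. This is consistent with the prefactors, since~$3^{-sn}3^{-tn}=3^{-n}$ when~$s+t=1$, and with the downstream application, where Proposition~\ref{p.ahom} pairs this proposition with Proposition~\ref{p.flux.norm}, which controls precisely~$3^{-tn}\|\a\nabla v(\cu_n)\|_{\Hring^{-t}(\cu_n)}$. Your div--curl estimate with~$\sigma=t$ already lands on the intended right-hand side; there is nothing more to do.
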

\begin{proof}
Using~\eqref{e.J.energy} and the inequality~$\frac{1}{2}a^2 \leq b^2 + (a-b)^2$
\begin{align}
\label{e.start.J.bound}
J(\cu_{n-1},p,q) & = \frac{1}{2}\fint_{\cu_{n-1}} \nabla v(\cu_{n-1})\cdot \s \nabla v(\cu_{n-1}) \nonumber \\
& \leq \fint_{\cu_{n-1}} \nabla v(\cu_n) \cdot \s \nabla v(\cu_n) + \fint_{\cu_{n-1}} \bigl(\nabla v (\cu_{n-1}) - \nabla v(\cu_n) \bigr) \cdot \s \bigl(\nabla v (\cu_{n-1}) - \nabla v(\cu_n) \bigr)\,.
\end{align}
For the second term we use the quadratic response in~\eqref{e.quad.response} to estimate
\begin{align*}
\lefteqn{
\fint_{\cu_{n-1}} \bigl(\nabla v (\cu_{n-1}) - \nabla v(\cu_n) \bigr) \cdot \s \bigl(\nabla v (\cu_{n-1}) - \nabla v(\cu_n) \bigr)
} \quad & \\
& \leq \sum_{z\in 3^{n-1}\mathbb{Z}^d \cap \cu_n} \bigl(\nabla v (z+\cu_{n-1}) - \nabla v(\cu_n) \bigr) \cdot \s \bigl(\nabla v (z+\cu_{n-1}) - \nabla v(\cu_n) \bigr) \\
& \leq \sum_{z\in 3^{n-1}\mathbb{Z}^d \cap \cu_n}\biggl( J(z+\cu_{n-1}) - \fint_{z+\cu_{n-1}} \biggl(-\frac{1}{2}\nabla v(\cu_n)\cdot \s\nabla v(\cu_n) - p \cdot \a \nabla v(\cu_n) + q\cdot \nabla v(\cu_n) \biggr) \biggr) \\
& \leq C(d) \avsum_{z\in 3^{n-1}\mathbb{Z}^d \cap \cu_n} \bigl( J(z+\cu_{n-1},p,q) - J(\cu_n,p,q) \bigr)\,.
\end{align*}
By Proposition~\ref{p.Ahom} and the representation of~$J(U,p,q)$ in~\eqref{e.J.bigA}, the above vanishes as~$n\to\infty$. We are therefore left only with estimating the first term in~\eqref{e.start.J.bound}.

We fix a cutoff function~$\varphi \in C_c^\infty(\cu_n)$ which satisfies
\begin{equation*}
\varphi|_{\cu_{n-1}} = 1\,, \quad \|\nabla \varphi\|_{L^\infty(\cu_n)} \leq C3^{-n}\,, \quad \mbox{and} \quad \|\nabla^2\varphi\|_{L^\infty(\cu_n)} \leq C3^{-2n}\,.
\end{equation*}
Testing the equation for~$v(\cu_n)$ with~$\varphi v(\cu_n)$ and using a duality splitting we have, for any~$0\leq s \leq 1$ and~$t=1-s$,
\begin{align*}
\fint_{\cu_{n-1}}\nabla v(\cu_n)\cdot \s \nabla v(\cu_n)& \leq C\fint_{\cu_n} \varphi \nabla v(\cu_n) \cdot \s \nabla v(\cu_n) = -C\fint_{\cu_n} v(\cu_n) \nabla \varphi \cdot \a\nabla v(\cu_n) \\
& \leq C \|v(\cu_n)\nabla \varphi\|_{\underline{H}^t(\cu_n)} \|\a\nabla v\|_{\underline{H}^{-t}(\cu_n)} \\
& \leq C 3^{-n}\|\nabla v(\cu_n)\|_{\underline{H}^{-s}(\cu_n)}\|\a\nabla v\|_{\underline{H}^{-t}(\cu_n)}\,,
\end{align*}
where in the last line we have used Proposition~\ref{p.cutoff.func}.
\end{proof}

Our next two propositions bound the weak norms on the right-hand side of Proposition~\ref{p.J.bound}. For notational convenience we define the random variables
\begin{equation}
\label{e.mathcalG}
\mathcal{G}_{s,l}(\cu_n) \coloneqq |\shom_*-\khom|^2\sum_{k=n-l+1}^n 3^{2s(k-n)}  \biggl| \avsum_{z\in 3^{k-1}\mathbb{Z}^d \cap \cu_n} \bfA(z+\cu_k) - \bfA(\cu_n) \biggr|
\end{equation}
and
\begin{equation}
\label{e.mathcalH}
\mathcal{H}_{s,l}(\cu_n) \coloneqq |\shom_*-\khom|^2\sum_{k=n-l+1}^{n} 3^{2s(k-n)} \avsum_{z\in 3^{k-1}\mathbb{Z}^d \cap \cu_n} \bigl|\bfA(z+\cu_k) - \bfAhom\bigr|^2
\end{equation}
Here~$\mathcal{G}_{s,l}(\cu_n)$ measures the subadditivity defect in~$\bfA(\cu_n)$, while~$\mathcal{H}_{s,l}(\cu_n)$ measures fluctuations.

\begin{proposition}
\label{p.grad.norm}
For any~$\epsilon > 0$ there exists a constant~$C=C(d,\epsilon)$ such that for any~$s\in (0,1),l\in\mathbb{N}$ and~$p\in\mathbb{R}^d$ with~$|p|=1$, if~$q\in\mathbb{R}^d$ is given by~\eqref{qs.choice} then
\begin{align*}
3^{-sn}\|\nabla v(\cu_n)\|_{\Hring^{-s}(\cu_n)} & \leq C\bigl( \lambda_{s-\epsilon}^{-\nicefrac12}(\cu_n) \mathcal{G}_{\epsilon,l}^{\nicefrac12}(\cu_n) + \mathcal{H}_{s,l}^{\nicefrac12}(\cu_n) +  3^{-\epsilon l}\lambda_{s-\epsilon}^{-\nicefrac12}(\cu_n)\|\s^{\nicefrac12}\nabla v(\cu_n)\|_{\L^2(\cu_n)} \bigr)\,.
\end{align*}
\end{proposition}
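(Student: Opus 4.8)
The plan is to expand the dual norm through its explicit representation~\eqref{e.Hring.def}, so that
\[
\bigl(3^{-sn}\|\nabla v(\cu_n)\|_{\Hring^{-s}(\cu_n)}\bigr)^2 = \sum_{k=-\infty}^{n} 3^{2s(k-n)}\avsum_{z\in 3^{k-1}\mathbb{Z}^d,\, z+\cu_k\subseteq\cu_n} \bigl|(\nabla v(\cu_n))_{z+\cu_k}\bigr|^2\,,
\]
and to split the $k$-sum at $k=n-l$. The key elementary fact is that on any subcube $U'=z+\cu_k\subseteq\cu_n$ the function $v(\cu_n)$ still solves $\nabla\cdot\a\nabla v(\cu_n)=0$, so $v(\cu_n)|_{U'}-(v(\cu_n))_{U'}\in\mathcal{A}(U')$ and the first coarse-graining inequality in~\eqref{e.cg.ineq} gives $|(\nabla v(\cu_n))_{U'}|^2\leq|\s_*^{-1}(U')|\fint_{U'}\nabla v(\cu_n)\cdot\s\nabla v(\cu_n)$. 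For the deep scales $k\leq n-l$ this alone suffices: summing over the boundedly-overlapping subcubes replaces $\avsum_z\fint_{z+\cu_k}$ by $C(d)\fint_{\cu_n}$, then splitting the weight as $3^{2s(k-n)}=3^{2(s-\epsilon)(k-n)}\,3^{2\epsilon(k-n)}$ and using $3^{2\epsilon(k-n)}\leq3^{-2\epsilon l}$ isolates a geometric sum $\sum_{k\leq n}3^{2(s-\epsilon)(k-n)}\max_z|\s_*^{-1}(z+\cu_k)|$ comparable to $\lambda_{s-\epsilon}^{-1}(\cu_n)$, which produces $C3^{-2\epsilon l}\lambda_{s-\epsilon}^{-1}(\cu_n)\|\s^{\nicefrac12}\nabla v(\cu_n)\|_{\L^2(\cu_n)}^2$; its square root is the third term of the claimed bound.

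For the top $l$ scales $n-l+1\leq k\leq n$ I would, on each $U'=z+\cu_k$, write $(\nabla v(\cu_n))_{U'}=(\nabla v(U'))_{U'}-[(\nabla v(U'))_{U'}-(\nabla v(\cu_n))_{U'}]$, where $v(U')=v(U',p,q)$ uses the same $p,q$. For the first term, the explicit average~\eqref{e.averages} gives $(\nabla v(U'))_{U'}=-p+\s_*^{-1}(U')(q+\k(U')p)$, which is the lower block of $\bfA(U')\binom{-p}{q}$ minus $p$; since $q=(\shom_*-\khom)p$ is chosen so that the lower block of $\bfAhom\binom{-p}{q}$ equals $p$, this collapses to the lower block of $(\bfA(U')-\bfAhom)\binom{-p}{q}$, hence $|(\nabla v(U'))_{U'}|^2\leq C|\shom_*-\khom|^2|\bfA(U')-\bfAhom|^2$ (using $|p|=1$ and absorbing the harmless factor $1+|q|^2$ into the prefactor). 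Multiplying by $3^{2s(k-n)}$, averaging over $z$ and summing over $k$ reproduces $\mathcal{H}_{s,l}(\cu_n)$ up to a constant.

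The second, harder, term is an energy defect. Both $v(U')$ and $v(\cu_n)|_{U'}-(v(\cu_n))_{U'}$ lie in $\mathcal{A}(U')$, so their difference does too, and~\eqref{e.cg.ineq} bounds $|(\nabla v(U'))_{U'}-(\nabla v(\cu_n))_{U'}|^2$ by $|\s_*^{-1}(U')|$ times $\fint_{U'}(\nabla v(U')-\nabla v(\cu_n))\cdot\s(\nabla v(U')-\nabla v(\cu_n))$. The quadratic response~\eqref{e.quad.response}, applied on $U'$ with $w=v(\cu_n)|_{U'}-(v(\cu_n))_{U'}$ so that $\nabla w=\nabla v(\cu_n)$ there, identifies the average of this energy over a $3^k$-tiling of $\cu_n$ with $2(\avsum_{U'}J(U')-J(\cu_n))$, which by~\eqref{e.J.bigA} and subadditivity~\eqref{e.bfA.subadd} equals $\binom{-p}{q}\cdot(\avsum_{U'}\bfA(U')-\bfA(\cu_n))\binom{-p}{q}\geq0$ and is therefore $\leq C|\shom_*-\khom|^2|\avsum_{U'}\bfA(U')-\bfA(\cu_n)|$. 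Summing over the top scales with the same weight splitting $3^{2s(k-n)}=3^{2(s-\epsilon)(k-n)}3^{2\epsilon(k-n)}$ and using $\sum_k a_kb_k\leq(\sum_k a_k)(\sum_k b_k)$ for nonnegative sequences, with the ellipticity factors summing to $\lesssim\lambda_{s-\epsilon}^{-1}(\cu_n)$ and the $\bfA$-defects to $\lesssim\mathcal{G}_{\epsilon,l}(\cu_n)$, controls this term by $C\lambda_{s-\epsilon}^{-1}(\cu_n)\mathcal{G}_{\epsilon,l}(\cu_n)$, whose square root is the first term. Collecting the three contributions and using $\sqrt{a+b+c}\leq\sqrt a+\sqrt b+\sqrt c$ finishes it. I expect the main friction to be purely bookkeeping: passing freely between the $3^{k-1}\mathbb{Z}^d$-indexed cubes appearing in the $\Hring^{-s}$ norm and the ellipticity constant and the $3^k$-tilings required for subadditivity and the quadratic response (each passage costs only $C(d)$, by averaging over the $3^d$ shifted tilings), and keeping exactly one spare factor $3^{\epsilon|k-n|}$ per scale so that the $k$-sums converge against $\lambda_{s-\epsilon}^{-1}(\cu_n)$ rather than $\lambda_s^{-1}(\cu_n)$ — which is precisely why the ellipticity constant in the statement appears at exponent $s-\epsilon$.
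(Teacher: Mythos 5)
Your proposal is correct and follows essentially the same path as the paper's own argument: same split of the $k$-sum at $n-l$, same use of the first coarse-graining inequality for the deep scales, same triangle-inequality decomposition $(\nabla v(\cu_n))_{U'} = (\nabla v(U'))_{U'} - \bigl[(\nabla v(U'))_{U'} - (\nabla v(\cu_n))_{U'}\bigr]$ for the mesoscopic scales, same use of~\eqref{e.averages} with the choice~\eqref{qs.choice} to reduce the first piece to the lower block of $(\bfA(U')-\bfAhom)\binom{-p}{q}$, and same use of quadratic response plus subadditivity for the second piece.

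The one place you diverge is the deep-scale bound: the paper's Step~1 first converts the $k$-sum to a squared $\ell^1$ sum ($\sum a_k \leq (\sum a_k^{1/2})^2$) and then applies Cauchy--Schwarz in $k$ to produce the $\epsilon$-loss, whereas you simply write $3^{2s(k-n)} = 3^{2(s-\epsilon)(k-n)}3^{2\epsilon(k-n)}$ and use $3^{2\epsilon(k-n)}\leq 3^{-2\epsilon l}$ directly before summing against $\lambda_{s-\epsilon}^{-1}(\cu_n)$. Your version is slightly shorter and more transparent; both produce the same final term $C\,3^{-\epsilon l}\lambda_{s-\epsilon}^{-\nicefrac12}(\cu_n)\|\s^{\nicefrac12}\nabla v(\cu_n)\|_{\L^2(\cu_n)}$. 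Your closing remark about the $C(d)$ losses from switching between $3^{k-1}\mathbb{Z}^d$-indexed overlapping cubes and $3^k$-tilings is indeed the only real bookkeeping hazard; the paper glosses over this in exactly the same way you propose to handle it.
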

\begin{proof}
\emph{Step 1: Small scales.}
Fixing~$l\in\mathbb{N}$, we first consider scales~$k\leq n -l$, where we will gain a scale discount factor. Using~\eqref{e.cg.ineq}, the fact that the sum over scales is an~$\ell^q$ norm and H\"older's inequality
\begin{align*}
\lefteqn{
\sum_{k=-\infty}^{n-l} 3^{2s(k-n)} \avsum_{z\in 3^{k-1}\mathbb{Z}^d\cap \cu_n} |(\nabla v(\cu_n))_{z+\cu_k}|^2 
} \qquad \qquad & \\
& \leq \sum_{k=-\infty}^{n-l} 3^{2s(k-n)} \sup_{z\in 3^{k-1}\mathbb{Z}^d\cap \cu_n}|\s_*^{-1}(z+\cu_k)| \avsum_{z\in 3^{k-1}\mathbb{Z}^d\cap \cu_n} |\s_*^{\nicefrac12}(z+\cu_k)(\nabla v(\cu_n))_{z+\cu_k}|^2  \\
& \leq 3^{-2sl}\|\s^{\nicefrac12}\nabla v(\cu_n)\|^2_{\L^2(\cu_n)} \sum_{k=-\infty}^{n-l} 3^{2s(k-n+l)}\sup_{z\in 3^{k-1}\mathbb{Z}^d\cap \cu_n} |\s_*^{-1}(z+\cu_k)| \\
& \leq 3^{-2sl} \|\s^{\nicefrac12}\nabla v(\cu_n)\|^2_{\L^2(\cu_n)} \biggl(\sum_{k=-\infty}^{n-l} 3^{s(k-n+l)}\sup_{z\in 3^{k-1}\mathbb{Z}^d\cap \cu_n} |\s_*^{-1}(z+\cu_k)|^{\nicefrac12}\biggr)^{2} \\
& \leq C 3^{-2sl} \|\s^{\nicefrac12}\nabla v(\cu_n)\|^2_{\L^2(\cu_n)} \epsilon^{-1} \sum_{k=-\infty}^{n-l} 3^{2(s-\epsilon)(k-n+l)}\sup_{z\in 3^{k-1}\mathbb{Z}^d\cap \cu_n} |\s_*^{-1}(z+\cu_k)| \\
& \leq C3^{-2\epsilon l} \epsilon^{-1} \|\s^{\nicefrac12}\nabla v(\cu_n)\|^2_{\L^2(\cu_n)} \lambda_{s-\epsilon}^{-1}(\cu_n)\,.
\end{align*}
\emph{Step 2: Mesoscopic scales.}
At every scale~$k > n-l$ we have by the triangle inequality
\begin{align}
\label{e.triangle}
|(\nabla v(\cu_n))_{z+\cu_k}| & \leq |(\nabla v(\cu_n) - \nabla v(z+\cu_k))_{z+\cu_k}| + |(\nabla v(z+\cu_k))_{z+\cu_k}|
\end{align}
For the first term we use the coarse-graining inequality~\eqref{e.cg.ineq} and then the quadratic response of~$J$ in~\eqref{e.quad.response} to obtain
\begin{align}
\label{e.grad.av.step2}
\lefteqn{
\avsum_{z\in 3^{k-1}\mathbb{Z}^d \cap \cu_n}  |(\nabla v(\cu_n) - \nabla v(z+\cu_k))_{z+\cu_k}|^2
} \qquad \qquad &\nonumber \\
& \leq \avsum_{z\in 3^{k-1}\mathbb{Z}^d \cap \cu_n} |\s_*^{-1}(z+\cu_k)| |\s_*^{\nicefrac12}(z+\cu_k)((\nabla v(\cu_n) - \nabla v(z+\cu_k))_{z+\cu_k}|^2 \nonumber \\
& \leq \avsum_{z\in 3^{k-1}\mathbb{Z}^d \cap \cu_n} |\s_*^{-1}(z+\cu_k)| \fint_{z+\cu_k} ((\nabla v(\cu_n) - \nabla v(z+\cu_k))\cdot \s ((\nabla v(\cu_n) - \nabla v(z+\cu_k)) \nonumber \\
& \leq \sup_{z\in 3^{k-1}\mathbb{Z}^d \cap \cu_n}|\s_*^{-1}(z+\cu_k)| \avsum_{z\in 3^{k-1}\mathbb{Z}^d \cap \cu_n} J(z+\cu_k,p,q) - J(\cu_n,p,q).
\end{align}
By our double-variable representation for~$J$ in~\eqref{e.J.bigA}
\begin{align*}
\avsum_{z\in 3^{k-1}\mathbb{Z}^d \cap \cu_n} J(z+\cu_k,p,q) - J(\cu_n,p, q) = \frac{1}{2}\begin{pmatrix}
-p \\ q
\end{pmatrix} \cdot \biggl( \avsum_{z\in 3^{k-1}\mathbb{Z}^d \cap \cu_n} \bfA(z+\cu_k) - \bfA(\cu_n) \biggr)\begin{pmatrix}
-p \\ q
\end{pmatrix}\,,
\end{align*}
so summing~\eqref{e.grad.av.step2} over the scales from~$k=n-l+1$ and applying H\"older's inequality in the sum, we have
\begin{align*}
\lefteqn{
\sum_{k=n-l+1}^n 3^{2s(k-n)} \sup_{z\in 3^{k-1}\mathbb{Z}^d \cap \cu_n}|\s_*^{-1}(z+\cu_k)| \avsum_{z\in 3^{k-1}\mathbb{Z}^d \cap \cu_m} J(z+\cu_k,p,q) - J(\cu_m,p,q)
} \qquad \qquad  \qquad & \\
& \leq \sum_{k=n-l+1}^n 3^{2(s-\epsilon)(k-n)} \sup_{z\in 3^{k-1}\mathbb{Z}^d \cap \cu_n}|\s_*^{-1}(z+\cu_k)| \\
& \qquad \times \sum_{k=n-l+1}^n 3^{2\epsilon(k-n)} \biggl| \avsum_{z\in 3^{k-1}\mathbb{Z}^d \cap \cu_n} \bfA(z+\cu_k) - \bfA(\cu_n) \biggr|(|p| + |q|)^2 \\
& \leq C \lambda_{s-\epsilon}^{-1}(\cu_n) \mathcal{G}_{\epsilon,l}(\cu_n)\,.
\end{align*}
For the second term in~\eqref{e.triangle} we use the averages in~\eqref{e.averages},
\begin{align*}
|(& \nabla v(z+\cu_k,p,q_n))_{z+\cu_k}| = |-p + \s_*^{-1}(z+\cu_k)(q_n +\k(z+\cu_k)p)| \\
& = |\s_*^{-1}(z+\cu_k)\k(z+\cu_k)p - \shom_*^{-1}\khom p + (\s_*^{-1}(z+\cu_k)-\shom_*^{-1})(\shom_* - \khom)p| \\
& \leq \bigl|\bfA(z+\cu_k) - \bfAhom\bigr||\shom_*-\khom|
\,,
\end{align*}
and thus 
\begin{align*}
\sum_{k=n-l+1}^n 3^{2s(k-n)}  \avsum_{z\in 3^{k}\mathbb{Z}^d \cap \cu_n} & |(\nabla v(z+\cu_k)_{z+\cu_k}|^2 \\
& \leq |\shom_*-\khom|^2\sum_{k=n-l+1}^{n} 3^{2s(k-n)} \avsum_{z\in 3^{k-1}\mathbb{Z}^d \cap \cu_n} \bigl|\bfA(z+\cu_k) - \bfAhom\bigr|^2 \\
& \leq C \mathcal{H}_{s,l}(\cu_n)\,.
\end{align*}
This completes the proof. 
\end{proof}

\begin{proposition}
\label{p.flux.norm}
For any~$\epsilon > 0$ there exists a constant~$C=C(d,\epsilon)$ such that for any~$t\in (0,1)$ and~$p\in\mathbb{R}^d$ with~$|p|=1$, if~$q$ is given by~\eqref{qs.choice} then
\begin{align*}
3^{-tn}\|\a\nabla v(\cu_n)\|_{\Hring^{-t}(\cu_n)} \leq C \Lambda_{t-\epsilon}^{\nicefrac12}(\cu_n)\|\s^{\nicefrac12}\nabla v(\cu_n)\|_{\L^2(\cu_n)}\,.
\end{align*}
\end{proposition}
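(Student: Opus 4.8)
The plan is to estimate the square of the left-hand side scale by scale and then resum. Recalling the definition~\eqref{e.Hring.def} of the norm, write
\[
3^{-2tn}\|\a\nabla v(\cu_n)\|_{\Hring^{-t}(\cu_n)}^2 = \sum_{k=-\infty}^n 3^{2t(k-n)}\avsum_{z\in 3^{k-1}\mathbb{Z}^d,\, z+\cu_k\subseteq\cu_n}\bigl|(\a\nabla v(\cu_n))_{z+\cu_k}\bigr|^2\,.
\]
Since $v(\cu_n)$ solves $-\nabla\cdot\a\nabla v(\cu_n)=0$ in $\cu_n$, for every subcube $z+\cu_k\subseteq\cu_n$ the function $v(\cu_n)-(v(\cu_n))_{z+\cu_k}$ lies in $\mathcal{A}(z+\cu_k)$, so the second coarse-graining inequality in~\eqref{e.cg.ineq}, combined with the elementary bound $w\cdot w\leq|M|\,(w\cdot M^{-1}w)$ valid for any symmetric positive matrix $M$, gives
\[
\bigl|(\a\nabla v(\cu_n))_{z+\cu_k}\bigr|^2 \leq |\b(z+\cu_k)|\,\|\s^{\nicefrac12}\nabla v(\cu_n)\|_{\L^2(z+\cu_k)}^2\,.
\]

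First I would bound $|\b(z+\cu_k)|$ by $M_k:=\max\{|\b(z'+\cu_k)|:z'\in 3^{k-1}\mathbb{Z}^d,\ z'+\cu_k\subseteq\cu_n\}$ and take it out of the average. Since every point of $\cu_n$ lies in at most $3^d$ of the cubes $z+\cu_k$ with $z\in 3^{k-1}\mathbb{Z}^d$, while the number of such cubes contained in $\cu_n$ is comparable to $3^{d(n-k)}$, a volume count shows $\avsum_z\|\s^{\nicefrac12}\nabla v(\cu_n)\|_{\L^2(z+\cu_k)}^2\leq C(d)\|\s^{\nicefrac12}\nabla v(\cu_n)\|_{\L^2(\cu_n)}^2$ uniformly in $k$. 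This reduces matters to proving $\sum_{k\leq n}3^{2t(k-n)}M_k\leq C\,\Lambda_{t-\epsilon}(\cu_n)$.

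The last step is purely combinatorial. Any scale-$k$ cube $z+\cu_k$ with $z\in 3^{k-1}\mathbb{Z}^d$ splits into the $3^d$ disjoint scale-$(k-1)$ cubes $z+3^{k-1}j+\cu_{k-1}$, $j\in\{-1,0,1\}^d$, whose centres again lie on $3^{k-1}\mathbb{Z}^d$ and which belong to the canonical scale-$(k-1)$ partition; applying the subadditivity~\eqref{e.bfA.subadd} of $\bfA(\cdot)$ to the upper-left block, together with monotonicity of the spectral norm on positive matrices, yields $|\b(z+\cu_k)|\leq\max_j|\b(z+3^{k-1}j+\cu_{k-1})|$. Hence $M_k\leq\max_{w\in 3^{k-1}\mathbb{Z}^d\cap\cu_n}|\b(w+\cu_{k-1})|$, which is exactly the scale-$(k-1)$ term appearing in $\Lambda_t(\cu_n)$; shifting the index of summation and bounding $3^{2t}\leq 9$ gives $\sum_{k\leq n}3^{2t(k-n)}M_k\leq C\,\Lambda_t(\cu_n)$, and since $3^{2t(k-n)}\leq 3^{2(t-\epsilon)(k-n)}$ for $k\leq n$ we may replace $\Lambda_t(\cu_n)$ by $C_\epsilon\,\Lambda_{t-\epsilon}(\cu_n)$. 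Assembling the three displayed bounds proves the claim. I expect the only part requiring genuine attention is this final bookkeeping with the two lattices $3^{k-1}\mathbb{Z}^d$ and $3^k\mathbb{Z}^d$ and the unavoidable one-scale shift it forces; everything else is an immediate consequence of the coarse-graining inequalities and subadditivity recorded in Section~\ref{s.coarse.graining}.
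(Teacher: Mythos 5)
Your proof is correct and follows essentially the same route as the paper's: apply the second coarse-graining inequality of~\eqref{e.cg.ineq} on each subcube, pull out the supremum of~$|\b(z+\cu_k)|$, and do the volume count to recover the full energy~$\|\s^{\nicefrac12}\nabla v(\cu_n)\|_{\L^2(\cu_n)}^2$. The one place where you are more careful than the paper is the final bookkeeping step: the dual norm sums over~$z\in 3^{k-1}\mathbb{Z}^d$ (overlapping cubes~$z+\cu_k$) whereas~$\Lambda_t(\cu_n)$ is defined via a max over the canonical partition~$z\in 3^k\mathbb{Z}^d$; your use of subadditivity~\eqref{e.bfA.subadd} of~$\bfA$ to push~$|\b(z+\cu_k)|$ down to a max over the scale-$(k-1)$ canonical cubes, followed by a harmless index shift, is the correct way to reconcile the two lattices. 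The paper writes this step implicitly as the passage from~$\sup_{z\in 3^{k-1}\mathbb{Z}^d\cap\cu_n}|\b(z+\cu_k)|$ to~$C\epsilon^{-1}\Lambda_{t-\epsilon}(\cu_n)$ without comment; your version makes it explicit and is more convincing.
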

\begin{proof}
The proof is nearly identical to step 1 in Proposition~\ref{p.grad.norm}. Using the coarse-graining inequality for the flux in~\eqref{e.cg.ineq}, we find
\begin{align*}
\sum_{k=-\infty}^{n} 3^{2t(k-n)} & \avsum_{z\in 3^{k-1}\mathbb{Z}^d\cap \cu_n} |(\a\nabla v(\cu_n))_{z+\cu_k}|^2 \\
& \leq \sum_{k=-\infty}^{n} 3^{2t(k-n)} \sup_{z\in 3^{k-1}\mathbb{Z}^d\cap \cu_n}|\b(z+\cu_k)| \avsum_{z\in 3^{k-1}\mathbb{Z}^d\cap \cu_n} |\b^{-\nicefrac12}(z+\cu_k)(\a\nabla v(\cu_n))_{z+\cu_k}|^2  \\
& \leq \|\s^{\nicefrac12}\nabla v(\cu_n)\|^2_{\L^2(\cu_n)} \sum_{k=-\infty}^{n} 3^{2t(k-n)}\sup_{z\in 3^{k-1}\mathbb{Z}^d\cap \cu_n} |\b(z+\cu_k)| \\
& \leq C\epsilon^{-1} \|\s^{\nicefrac12}\nabla v(\cu_n)\|^2_{\L^2(\cu_n)} \Lambda_{t-\epsilon}(\cu_n)\,.
\end{align*}
\end{proof}

Putting together our results we can now prove Proposition~\ref{p.ahom}.
\begin{proof}[Proof of Proposition~\ref{p.ahom}]
Let~$s,t$ be the parameters given by~\ref{a.ellipticity} and~$l\in\mathbb{N}$ fixed. We apply Proposition~\ref{p.J.bound}, Proposition~\ref{p.grad.norm} with parameter~$s+\epsilon$ and~$\epsilon = \frac{1-s-t}{2}$, and Proposition~\ref{p.flux.norm} with parameter~$t+\epsilon$ and~$\epsilon = \frac{1-s-t}{2}$. Then we obtain
\begin{align*}
& \limsup_{n\to\infty}J(\cu_n,p,q) \\
& \leq C(d,1-s-t)\limsup_{n\to\infty} \Bigl(\lambda_{s}^{-\nicefrac12}(\cu_n)\mathcal{G}_{\epsilon,l}^{\nicefrac12}(\cu_n) + \mathcal{H}_{s+\epsilon,l}^{\nicefrac12}(\cu_n) + 3^{-\epsilon l} \lambda_{s,l}^{-\nicefrac12}(\cu_n)\|\s^{\nicefrac12}\nabla v(\cu_n)\|_{\L^2(\cu_n)}\Bigr) \\
& \qquad \times \limsup_{n\to\infty} \bigl( \Lambda_{t}^{\nicefrac12}(\cu_n)\|\s^{\nicefrac12}\nabla v(\cu_n)\|_{\L^2(\cu_n)}\bigr)\,.
\end{align*}
By Proposition~\ref{p.Ahom} and~\eqref{e.J.bigA} we know that~$J(\cu_n,p,q)$ converges almost surely as~$n\to\infty$, so almost surely
\begin{align*}
\limsup_{n\to\infty}\|\s^{\nicefrac12}\nabla v(\cu_n)\|_{\L^2(\cu_n)} = \limsup_{n\to\infty}(2J(\cu_n,p,q,))^{\nicefrac12} < \infty\,.
\end{align*}
Similarly, by our ellipticity assumption~\ref{a.ellipticity} we have that almost surely
\begin{align*}
\limsup_{n\to\infty}\lambda_{s}^{-\nicefrac12}(\cu_n)<\infty \quad \mbox{and} \quad  \limsup_{n\to\infty} \Lambda_{t}^{\nicefrac12}(\cu_n) < \infty\,.
\end{align*}
For any~$l\in\mathbb{N}$ the random variable~$\mathcal{G}_{\epsilon,l}(\cu_n)$ is a sum of~$l$ random variables (one at each scale~$k$), each of which vanishes in the limit by Proposition~\ref{p.Ahom}. The exact same reasoning applies to~$\mathcal{H}_{s+\epsilon,l}(\cu_n)$, so for any~$l\in\mathbb{N}$ we have almost surely
\begin{equation*}
\lim_{n\to\infty} \mathcal{G}_{\epsilon,l}(\cu_n)  + \lim_{n\to\infty} \mathcal{H}_{s+\epsilon,l}(\cu_n)= 0\,.
\end{equation*}
Then sending first~$n\to\infty$ and then~$l\to\infty$ we conclude that
\begin{equation*}
\lim_{n\to\infty}J(\cu_n,p,(\shom_* -\khom)p) = 0\,.
\end{equation*}
Since the exact same arguments apply to the adjoint problem we conclude that~$J^*(\cu_n,p,(\shom_*+\khom)p)$ also vanishes in the limit, which in view of~\eqref{e.s.gap} completes the proof.
\end{proof}

\section{Homogenization of the Dirichlet problem}
\label{s.homogenize}

We define the homogenized matrix
\begin{equation}
\label{e.ahom.def}
\ahom \coloneqq \shom + \khom\,,
\end{equation}
which is almost surely the limit of both~$\s(\cu_n)+\k(\cu_n)$ and~$\s_*(\cu_n)+\k(\cu_n)$. As in the remark at the end of Section~\ref{ss.cg.defs}, we are allowed to make a centering operation by subtracting from the coefficient field~$\a(\cdot)$ any constant, skew-symmetric matrix~$\mathbf{h}$, because this leaves the set of solutions unchanged. The coarse-grained quantities~$\s(U)$ and~$\s_*(U)$ are then unchanged while~$\k(U)$ is replaced with~$\k(U) - \mathbf{h}$. Since we proved in Proposition~\ref{p.ahom} that the limiting matrix~$\khom$ is skew-symmetric we can take~$\mathbf{h} = \khom$ and therefore assume without loss of generality that~$\khom = 0$.

Convergence of the coarse-grained coefficients implies homogenization of the Dirichlet problem by deterministic arguments. We define the quantity that will appear in the bound by
\begin{equation}
\label{e.mathcalE.def}
\mathcal{E}_s(\cu_n) \coloneqq  \sum_{k=-\infty}^n 3^{2s(k-n)} \max_{z\in 3^k\mathbb{Z}^d \cap \cu_n} |\bfA(z+\cu_k) - \bfAhom| \,.
\end{equation}
It is immediate from the definitions~\eqref{e.lambda.def} and~\eqref{e.Lambda.def} that for any~$s\in (0,1)$ and~$n\in\mathbb{Z}$,
\begin{equation}
\label{e.lambda.by.mathcalE}
\lambda_s^{-1}(\cu_n) \leq C(|\shom^{-1}| + \mathcal{E}_s(\cu_n)) \quad \mbox{and} \quad \Lambda_s(\cu_n) \leq C(|\bhom| + \mathcal{E}_s(\cu_n))\,.
\end{equation}

\begin{theorem}
\label{t.black.box}
Suppose that~$s\in (0,1)$,~$\epsilon \in (0,s)$,~$\ahom$ is the homogenized matrix in~\eqref{e.ahom.def}, and~$\khom = 0$. There exists a constant~$C = C(\ahom,d,s,\epsilon)$ such that if~$h\in H^{1+s}(\cu_0)$ and~$u\in H^1_{\s}(\cu_0)$ satisfies
\begin{align}
\label{e.to.homogenize}
\left\{
\begin{aligned}
& \nabla \cdot \a \nabla u = \nabla \cdot \ahom\nabla h \quad &\mbox{in } \cu_0 \,, \\
& u=h \quad &\mbox{on } \partial \cu_0\,,
\end{aligned}
\right.
\end{align}
then 
\begin{align*}
\lefteqn{
\|\nabla u - \nabla h \|_{\underline{\hat{H}}^{-s}(\cu_0)} + \|\a\nabla u - \ahom \nabla h\|_{\underline{\hat{H}}^{-s}(\cu_0)}
}\qquad & \\ 
& \leq C\|\nabla h\|_{\underline{H}^s(\cu_0)}\sup_{z\in 3^n\mathbb{Z}^d \cap \cu_0}(1 + \mathcal{E}_{s-\epsilon}^{\nicefrac12}(z+\cu_n))\big(3^{sn} + \sup_{z\in 3^n\mathbb{Z}^d \cap \cu_0} \mathcal{E}_s^{\nicefrac{1}{2}}(z + \cu_n)\big)\,.
\end{align*}
\end{theorem}

Theorem~\ref{t.black.box} is a deterministic error bound using the coarse-grained ellipticity constants. Combining this with convergence of the coarse-grained matrices proves quenched homogenization for the Dirichlet problem. Theorem~\ref{t.main} is just a rescaling of the following corollary.

\begin{corollary}
\label{c.homogenization}
Assume~\ref{a.stationarity},\ref{a.ergodicity},~\ref{a.finite.E} and~\ref{a.ellipticity} and suppose that~$\khom=0$,~$\alpha \in (\max\{s,t\},1)$,~$h\in H^{1+\alpha}(\cu_0)$ and~$\a^\epsilon \coloneqq \a( \nicefrac{\cdot}{\epsilon})$. If for each~$\epsilon \in (0,1)$ we let~$u^\epsilon\in H^1_{\s}(\cu_0)$ denote the unique solution to
\begin{align}
\label{e.eqn.hom.cor}
\left\{
\begin{aligned}
& \nabla \cdot \a^\epsilon \nabla u^\epsilon = \nabla \cdot \ahom \nabla h  &\mbox{in } \cu_0\,, \\
& u^\epsilon = h &\mbox{on } \partial \cu_0\,,
\end{aligned}
\right.
\end{align}
then
\begin{equation*}
\mathbb{P}\biggl[ \limsup_{\epsilon \to 0} \bigl(\|\nabla u^\epsilon - \nabla h \|_{\underline{H}^{-\alpha}(\cu_0)} + \|\a^\epsilon\nabla u^\epsilon - \ahom \nabla h\|_{\underline{H}^{-\alpha}(\cu_0)}\bigr) = 0 \biggr] = 1\,.
\end{equation*}
\end{corollary}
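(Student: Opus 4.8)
The plan is to combine the deterministic estimate of Theorem~\ref{t.black.box} with the almost-sure convergence of the coarse-grained matrices from Section~\ref{s.ellip.converge}, after a rescaling that moves the oscillation scale $\epsilon$ back to unit size. Fix $\epsilon\in(0,1)$ and let $m=m(\epsilon):=\lceil\log_3(1/\epsilon)\rceil$, so $m\to\infty$ as $\epsilon\to0$. Setting $\hat u(z):=u^\epsilon(\epsilon 3^m z)$ and $\hat h(z):=h(\epsilon 3^m z)$, the function $\hat u$ solves $\nabla\cdot\a(3^m\cdot)\nabla\hat u=\nabla\cdot\ahom\nabla\hat h$ on $\hat U:=(\epsilon 3^m)^{-1}U\subseteq\cu_0$ (note $\epsilon 3^m\in[1,3)$, so $\hat U$ is a bounded dilation of $U$ and inherits its $C^{1,1}$ or convex-Lipschitz character with a controlled constant), with $\hat u=\hat h$ on $\partial\hat U$. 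I would then apply Theorem~\ref{t.black.box} to the field $\a(3^m\cdot)\in\Omega$ --- whose homogenized matrix is again $\ahom$, with $\khom=0$ --- with parameter $s:=\alpha$, data $\mathbf f:=-\ahom\nabla\hat h\in H^\alpha(\hat U)$, and boundary function $\hat h\in H^{1+\alpha}(\hat U)$. Using the scale-covariance identity $\bfA^{\a(3^m\cdot)}(W)=\bfA(3^m W)$, the $\cu_0$-quantities of $\a(3^m\cdot)$ in the estimate turn into the $\cu_m$-quantities of $\a$; undoing the change of variables (the gradient factors cancel between the two sides of the scale-covariant norms) yields, almost surely and with $C=C(U,\ahom,d,\alpha)$ independent of $\epsilon$,
\begin{align*}
\|\nabla u^\epsilon-\nabla h\|_{\underline{H}^{-\alpha}(U)}+\|\a^\epsilon\nabla u^\epsilon-\ahom\nabla h\|_{\underline{H}^{-\alpha}(U)}
&\le C\bigl(|\bfA(\tfrac1\epsilon U)-\bfAhom|+\mathcal{E}_\alpha(\cu_m)\bigr)^{\nicefrac12}\\
&\quad\times\Bigl(\bigl(|\b(\tfrac1\epsilon U)|^{\nicefrac12}+\Lambda_\alpha^{\nicefrac12}(\cu_m)\bigr)\|\nabla h\|_{\underline{H}^\alpha(U)}+\bigl(|\s_*^{-1}(\tfrac1\epsilon U)|^{\nicefrac12}+\lambda_\alpha^{-\nicefrac12}(\cu_m)\bigr)\|\ahom\nabla h\|_{\underline{H}^\alpha(U)}\Bigr)\,,
\end{align*}
where all coarse-grained quantities are now those of $\a$.

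Next I would show the right-hand side tends to $0$ as $\epsilon\to0$. The two norms of $h$ are fixed finite numbers since $h\in H^{1+\alpha}(U)$. Since $\tfrac1\epsilon U$ is the dilation of a fixed bounded Lipschitz domain by $\tfrac1\epsilon\to\infty$, Proposition~\ref{p.Ahom} gives $\bfA(\tfrac1\epsilon U)\to\bfAhom$, $\b(\tfrac1\epsilon U)\to\shom$ and $\s_*^{-1}(\tfrac1\epsilon U)\to\shom_*^{-1}$ (using $\khom=0$, so $\bhom=\shom$); hence $|\bfA(\tfrac1\epsilon U)-\bfAhom|\to0$ and the two $\tfrac1\epsilon U$-prefactors stay bounded. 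Because $\alpha>\max\{s,t\}$, a scalewise comparison of the defining sums gives $\Lambda_\alpha(\cu_m)\lesssim\Lambda_t(\cu_m)$ and $\lambda_\alpha^{-1}(\cu_m)\lesssim\lambda_s^{-1}(\cu_m)$, which are almost surely bounded as $m\to\infty$ by~\ref{a.ellipticity}. Thus everything reduces to proving $\mathcal{E}_\alpha(\cu_m)\to0$ almost surely, which is the crux.

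For that I would split the sum defining $\mathcal{E}_\alpha(\cu_m)$ at the scale $m-l$. On the finitely many scales $k\in(m-l,m]$: for $k=m-i$ the admissible subcubes $z+\cu_k$ are precisely the dilations $3^{m-i}(w+\cu_0)$ with $w\in\Z^d$, $w+\cu_0\subseteq\cu_i$ (a fixed finite family), so Proposition~\ref{p.Ahom} forces $\max_z|\bfA(z+\cu_k)-\bfAhom|\to0$ as $m\to\infty$, and this part of the sum vanishes in the limit for each fixed $l$. On the tail $k\le m-l$: bounding $|\bfA(z+\cu_k)-\bfAhom|\le C\bigl(|\b(z+\cu_k)|+|\s_*^{-1}(z+\cu_k)|\bigr)+|\bfAhom|$ and extracting the factor $3^{2(\alpha-t)(k-m)}\le3^{-2(\alpha-t)l}$ (respectively with $s$), the tail is at most $C\,3^{-2(\alpha-t)l}\Lambda_t(\cu_m)+C\,3^{-2(\alpha-s)l}\lambda_s^{-1}(\cu_m)+C\,3^{-2\alpha l}|\bfAhom|$, small uniformly in $m$ once $m$ exceeds the almost-sure threshold of~\ref{a.ellipticity} and $l$ is large. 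Choosing first $l$ then $m$ large shows $\mathcal{E}_\alpha(\cu_m)\to0$, and the corollary follows.

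The main obstacle is exactly this last ingredient, together with the reason rescaling is unavoidable: the coarse-grained matrices of $\a^\epsilon$ naturally live on dilated --- hence non-triadic --- cubes of $\a$, for which $\bfA$ is not controlled under~\ref{a.ellipticity} alone (only the triadic cubes $\cu_n$ appear there), so one must first transfer to the scale-$m$ quantities $\mathcal{E}_\alpha(\cu_m),\Lambda_\alpha(\cu_m),\lambda_\alpha^{-1}(\cu_m)$; and the ``top scales vanish, tail uniformly summable'' argument needs genuine room in the exponent, which is precisely why the hypothesis asks for $\alpha\in(\max\{s,t\},1)$ rather than merely $\alpha<1$. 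A secondary technical point is the scale-covariance bookkeeping: one should verify that the powers of $\epsilon$ produced by the substitution cancel between the two sides of the estimate and that the regularity constant of $\tfrac1\epsilon U$ stays bounded as $\epsilon\to0$.
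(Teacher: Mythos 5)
Your argument is correct and follows the same overall strategy as the paper: apply Theorem~\ref{t.black.box}, show $\bfA(\epsilon^{-1}U)\to\bfAhom$ and $\mathcal{E}_\alpha\to0$ via the ergodic theorem and the split at scale $n-l$ (exploiting $\alpha>\max\{s,t\}$ for the tail, and finitely many converging terms for the near scales), and control the second factor by \ref{a.ellipticity}. The one genuinely different technical choice is how you handle the fact that $\epsilon^{-1}$ is not a power of $3$: you rescale the domain and the functions by the residual factor $\epsilon 3^m\in[1,3)$, applying Theorem~\ref{t.black.box} on $\hat U=(\epsilon 3^m)^{-1}U$ with the field $\a(3^m\cdot)$ and then undoing the change of variables, whereas the paper instead observes that Proposition~\ref{p.negative.norm.U} (and hence Theorem~\ref{t.black.box}) remains valid verbatim after shifting the dyadic scale decomposition by a fixed fractional offset $\tilde{k}\in[0,1]$, so that one may take $\epsilon^{-1}=3^n$ without loss of generality. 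Both work; the paper's version avoids having to track uniformity of the constant $C(\hat U,\ahom,d,\alpha)$ across the family of shrinking domains $\hat U$ and the comparability of the $\underline{H}^{-\alpha}$ norms under $[1,3)$-dilations, while your version avoids re-examining the proof of the functional inequality. If you keep your route, you should state explicitly that the constant in Theorem~\ref{t.black.box} is uniform over the dilations $\hat U$ for scaling factors in $[1/3,1]$ and that the rescaled negative norms on $\hat U$ are comparable (uniformly in $\epsilon$) to those on $U$, both of which are true but are doing real work in your argument.
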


Theorem~\ref{t.black.box} and Corollary~\ref{c.homogenization} are stated with~$\khom = 0$. If~$\khom$ does not vanish then, in view of the centring operation~$\a \to \a-\khom$, the flux quantity that we bound is
\begin{equation*}
\|(\a-\khom)\nabla u - \shom \nabla h\|_{\underline{H}^{-\alpha}(\cu_0)}\,,
\end{equation*}
and we recover the flux bound by
\begin{equation}
\|\a\nabla u - \ahom \nabla h\|_{\underline{H}^{-\alpha}(\cu_0)} \leq \|(\a-\khom)\nabla u - \shom \nabla h\|_{\underline{H}^{-\alpha}(\cu_0)} + |\khom|\|\nabla u - \nabla h\|_{\underline{H}^{-\alpha}(\cu_0)}\,.
\end{equation}

There are three steps in proving Theorem~\ref{t.black.box}. The first, in Proposition~\ref{p.homogenize}, is a duality argument which controls the difference between the heterogeneous and the homogenized solution by a quantity involving the difference of~$\a$ and~$\ahom$. The second step, Proposition~\ref{p.fluxdiff}, uses the coarse-graining inequalities to obtain an explicit dependence on~$\mathcal{E}_s$, which will tend to zero as a consequence of the ergodic theorem. The final step has already been accomplished in Proposition~\ref{p.boundary.data}, where we bound the energy of the solution by the data of the problem and the coarse-grained ellipticity constants.

\begin{proposition}
\label{p.homogenize}
Suppose that~$s\in (0,1)$,~$U\subseteq \cu_0$ is a bounded domain which is either~$C^{1,1}$ or convex and Lipschitz,~$\ahom$ is the homogenized matrix in~\eqref{e.ahom.def} with~$\khom = 0$, and~$h\in H^{1+s}(U)$. There exists a constant~$C=C(U,\ahom,d)$ such that if~$u\in H^1_{\s}(U)$ satisfies
\begin{align*}
\left\{
\begin{aligned}
& \nabla \cdot \a \nabla u = \nabla \cdot \ahom \nabla h &\mbox{in } U\,, \\
& u=h \quad &\mbox{on } \partial U\,,
\end{aligned}
\right.
\end{align*}
then
\begin{equation}
\label{e.duality.estimate}
\|\nabla u - \nabla h \|_{\Hminusuls{-s}(U)} + \|\a\nabla u - \ahom \nabla h\|_{\Hminusuls{-s}(U)} \leq C \|(\ahom - \a)\nabla u\|_{\Hminusuls{-s}(U)}\,.
\end{equation}
\end{proposition}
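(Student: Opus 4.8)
The plan is to subtract off the homogenized solution and reduce the whole statement to an elliptic estimate for the \emph{constant}-coefficient operator $\ahom$ (which is symmetric, since $\khom=0$), obtained by a duality argument. Set $w:=u-h$, which vanishes on $\partial U$. Since $u$ solves $\nabla\cdot\a\nabla u=\nabla\cdot\ahom\nabla h$, writing $\nabla\cdot\ahom\nabla w=\nabla\cdot\ahom\nabla u-\nabla\cdot\ahom\nabla h=\nabla\cdot(\ahom-\a)\nabla u$ shows that $w$ solves the constant-coefficient Dirichlet problem
\begin{equation*}
-\nabla\cdot\ahom\nabla w=\nabla\cdot\mathbf{g}\quad\mbox{in }U\,,\qquad w=0\quad\mbox{on }\partial U\,,\qquad\mathbf{g}:=(\a-\ahom)\nabla u\,,
\end{equation*}
so that $\fint_U\ahom\nabla w\cdot\nabla\phi=-\fint_U\mathbf{g}\cdot\nabla\phi$ for every admissible test function $\phi$. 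If $\|(\ahom-\a)\nabla u\|_{\Hminusuls{-s}(U)}=\infty$ there is nothing to prove, so we may assume $\mathbf{g}\in\Hminusuls{-s}(U)$; the dual pairings appearing below are then finite, and the fact that $u$ is merely in $H^1_{\s}(U)$ is handled by the approximation procedure of Section~\ref{s.coarse.graining}. Since $\ahom\nabla h=\ahom\nabla u-\ahom\nabla w$, we also have the identity $\a\nabla u-\ahom\nabla h=\mathbf{g}+\ahom\nabla w$, so both terms on the left of~\eqref{e.duality.estimate} are controlled by $\|\mathbf{g}\|_{\Hminusuls{-s}(U)}+(1+|\ahom|)\,\|\nabla w\|_{\Hminusuls{-s}(U)}$, and it suffices to establish
\begin{equation*}
\|\nabla w\|_{\Hminusuls{-s}(U)}\le C\,\|\mathbf{g}\|_{\Hminusuls{-s}(U)}\,.
\end{equation*}

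To bound this negative norm, I would take an arbitrary $\mathbf{G}\in C^\infty(U)^d$ with $\|\mathbf{G}\|_{\underline{H}^s(U)}\le 1$ and let $\zeta\in H^1_0(U)$ solve the constant-coefficient problem $-\nabla\cdot\ahom\nabla\zeta=-\nabla\cdot\mathbf{G}$ in $U$ with zero Dirichlet data. Then $\nabla\cdot(\mathbf{G}-\ahom\nabla\zeta)=0$, and since $w$ has zero trace, $\fint_U\nabla w\cdot(\mathbf{G}-\ahom\nabla\zeta)=0$; hence, using the weak formulation of the equation for $w$ with test function $\zeta$ and the symmetry of $\ahom$,
\begin{equation*}
\fint_U\nabla w\cdot\mathbf{G}=\fint_U\ahom\nabla w\cdot\nabla\zeta=-\fint_U\mathbf{g}\cdot\nabla\zeta\le\|\mathbf{g}\|_{\Hminusuls{-s}(U)}\,\|\nabla\zeta\|_{\underline{H}^s(U)}\,,
\end{equation*}
where the last inequality uses that $\mathbf{g}$ lies in the hat space $\Hminusuls{-s}(U)$, whose dual pairing extends to the not-necessarily-compactly-supported field $\nabla\zeta$. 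Taking the supremum over $\mathbf{G}$ reduces everything to the constant-coefficient regularity estimate $\|\nabla\zeta\|_{\underline{H}^s(U)}\le C\,\|\mathbf{G}\|_{\underline{H}^s(U)}$.

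This last estimate I would obtain by interpolating the linear operator $\mathbf{G}\mapsto\nabla\zeta$ between $s=0$ and $s=1$. At $s=0$ it is the energy estimate: testing the equation for $\zeta$ with $\zeta$ and using ellipticity of $\ahom$ gives $\|\nabla\zeta\|_{L^2(U)}\lesssim\|\mathbf{G}\|_{L^2(U)}$. At $s=1$, if $\mathbf{G}\in H^1(U)$ then the right-hand side $-\nabla\cdot\mathbf{G}$ belongs to $L^2(U)$, and global $H^2$-regularity for the Dirichlet problem — valid on $C^{1,1}$ and on convex Lipschitz domains, and reducible after a linear change of variables turning $\nabla\cdot\ahom\nabla$ into the Laplacian to the classical $H^2$ estimate on a domain of the same type — yields $\|\zeta\|_{H^2(U)}\lesssim\|\nabla\cdot\mathbf{G}\|_{L^2(U)}+\|\zeta\|_{L^2(U)}\lesssim\|\mathbf{G}\|_{H^1(U)}$. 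Interpolating these two endpoints gives $\|\nabla\zeta\|_{H^s(U)}\lesssim\|\mathbf{G}\|_{H^s(U)}$ for all $s\in(0,1)$, and since on the fixed domain $U$ the norms $\underline{H}^{\pm s}(U)$ are equivalent to the standard Sobolev norms (as recorded after~\eqref{e.Besov.norm.def}) the proof is complete.

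I expect the $s=1$ endpoint to be the main obstacle: it is precisely there that the regularity hypothesis on $U$ is consumed, since the global $H^2$ estimate — and hence the whole interpolation scheme — fails on a general Lipschitz domain, which explains why this proposition (unlike the energy estimates of Proposition~\ref{p.boundary.data}) requires $U$ to be $C^{1,1}$ or convex. The only other delicate point is the meaningfulness of the dual pairings given that $u$ is only in $H^1_{\s}(U)$, which is treated exactly as in Section~\ref{s.coarse.graining} by testing against mollified truncations and passing to the limit.
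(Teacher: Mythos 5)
Your proposal is correct and takes essentially the same approach as the paper: both reduce to a duality argument against the constant-coefficient Dirichlet problem for $\ahom$ (you set it up via the divergence-free field $\mathbf{G}-\ahom\nabla\zeta$, the paper via two integrations by parts, but the identity obtained is the same), and both obtain the key constant-coefficient estimate $\|\nabla\zeta\|_{\underline{H}^s(U)}\lesssim\|\mathbf{G}\|_{\underline{H}^s(U)}$ by interpolating the energy estimate with the Grisvard $H^2$-regularity bound on convex or $C^{1,1}$ domains. Your closing observations about the $s=1$ endpoint consuming the domain-regularity hypothesis and about the dual pairings requiring the truncation argument of Section~\ref{s.coarse.graining} match the paper's treatment.
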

\begin{proof}
The proof is by duality, so suppose that~$\mathbf{g}\in C^\infty(U)$. Let~$w$ be the solution to
\begin{equation*}
\begin{cases}
-\nabla \cdot \ahom \nabla w = \nabla \cdot \mathbf{g} & \mbox{in } U\,, \\
w = 0 & \mbox{on } \partial U\,.
\end{cases}
\end{equation*}
Then
\begin{align*}
\fint_U \nabla (u-h) \cdot \mathbf{g} & = -\fint_U (u-h) \nabla \cdot \mathbf{g} = \fint_U (u-h) \nabla \cdot \ahom \nabla w = \fint_U \nabla \cdot \ahom\nabla (u-h) \, w \\
& = \fint_U \nabla \cdot (\ahom - \a)\nabla u\, w = \fint_U (\ahom - \a)\nabla u \cdot \nabla w \leq \|(\ahom - \a)\nabla u\|_{\Hminusuls{-s}(U)}\|\nabla w\|_{\underline{H}^s(U)}\,.
\end{align*}
We claim that our assumptions on~$U$ imply that~$\|\nabla w\|_{\underline{H}^s(U)} \leq C\|\mathbf{g}\|_{\underline{H}^s(U)}$ for any~$s\in (0,1)$. By~\cite[Theorems 2.4.2.5 and 3.1.2.1]{Grisvard} in a convex or~$C^{1,1}$ domain the solution to the Dirichlet problem~$\Delta w = \nabla \cdot \mathbf{g}$ with zero boundary data satisfies the estimate~$\|w\|_{H^2(U)} \leq C \|\mathbf{g}\|_{H^1(U)}$. By testing the equation for~$w$ with itself we also have~$\|w\|_{H^1(U)} \leq C\|\mathbf{g}\|_{L^2(U)}$, so the solution operator is a bounded linear operator from~$L^2(U) \to H^1(U)$ and~$H^1(U) \to H^2(U)$. Since the Sobolev spaces~$H^s(U)$ can be constructed as interpolation spaces (see~\cite[Chapter 7, p. 228]{Adams}) we may apply the interpolation theorem~\cite[Theorem 7.23]{Adams} to conclude that the solution operator is bounded from~$H^s(U) \to H^{1+s}(U)$.

This proves that
\begin{equation*}
\fint_U \nabla (u-h)\cdot \mathbf{g} \leq C\|(\ahom - \a)\nabla u\|_{\Hminusuls{-s}(U)} \|\mathbf{g}\|_{\underline{H}^s(U)}\,,
\end{equation*}
which completes the bound for the gradients by duality. For the fluxes we simply use the triangle inequality
\begin{equation*}
\|\a\nabla u - \ahom \nabla h\|_{\Hminusuls{-s}(U)} \leq \|\a\nabla u - \ahom \nabla u\|_{\Hminusuls{-s}(U)} + |\ahom|\|\nabla u - \nabla h\|_{\Hminusuls{-s}(U)}\,.
\end{equation*}
\end{proof}

\begin{proposition}
\label{p.fluxdiff}
Suppose that~$\nabla \cdot \a\nabla u = \nabla \cdot \mathbf{f}$ in~$\cu_0$. Then for any~$s\in (0,1)$ and~$\epsilon \in (0,s)$ there exists a constant~$C = C(\ahom,s,\epsilon,d)$ such that for any~$n\in\mathbb{Z}$ with~$n\leq 0$ we have
\begin{align*}
\| (\a-\ahom)\nabla u\|_{\underline{\hat{H}}^{-s}(\cu_0)}& \leq C \|\s^{\nicefrac12}\nabla u\|_{\L^2(\cu_0)} \sup_{z\in 3^n\mathbb{Z}^d \cap \cu_0}\mathcal{E}_s^{\nicefrac{1}{2}}(z+\cu_n) \\
& \quad + C 3^{sn}[\mathbf{f}]_{\underline{H}^{s}(\cu_0)}\sup_{z\in 3^n\mathbb{Z}^d \cap \cu_0}(1 + \mathcal{E}_{s-\epsilon}^{\nicefrac{1}{2}}(z+\cu_n))\,.
\end{align*}
\end{proposition}
\begin{proof}
Fix a scale~$n \leq 0$. For~$z\in 3^n\mathbb{Z}^d \cap \cu_0$, let~$w_z$ solve~$\nabla \cdot \a\nabla w_z = \nabla \cdot (\mathbf{f} - (\mathbf{f})_{z+\cu_n})$ in~$z+\cu_n$ with zero Dirichlet boundary condition. Then
\begin{align*}
\|(\a-\ahom)\nabla u \|_{\underline{\hat{H}}^{-s}(z+\cu_n)} & \leq \|(\a-\ahom)\nabla (u-w_z) \|_{\underline{\hat{H}}^{-s}(z+\cu_n)} \\ & \qquad + |\ahom|\|\nabla w_z\|_{\underline{\hat{H}}^{-s}(z+\cu_n)} + \|\a \nabla w_z\|_{\underline{\hat{H}}^{-s}(z+\cu_n)}\,.
\end{align*}
The first term we control by~\cite[Lemma A1]{AK.HC} and~\eqref{e.cg.diff}
\begin{align*}
\lefteqn{
\|(\a-\ahom)\nabla (u-w_z) \|_{\underline{\hat{H}}^{-s}(z+\cu_n)}^2
} \quad & \\
& \leq \sum_{k=-\infty}^n 3^{2sk} \avsum_{y \in 3^k\mathbb{Z}^d \cap (z+\cu_n)} | (\a-\ahom)\nabla (u-w_z) )_{y+\cu_k}|^2 \\
& \leq \sum_{k=-\infty}^n 3^{2sk} \avsum_{y \in 3^k\mathbb{Z}^d \cap (z+\cu_n) } \sup_{|p|=1} J(y+\cu_k,p,\ahom p)\|\s^{\nicefrac12}\nabla (u-w_z)\|_{\L^2(y+\cu_k)} \\
& \leq  3^{sn}\mathcal{E}_s(z+\cu_n) ( \|\s^{\nicefrac12}\nabla u\|_{\L^2(z+\cu_n)}^2 + \|\s^{\nicefrac12}\nabla w_z\|_{\L^2(z+\cu_n)}^2 )\,.
\end{align*}
Combining Proposition~\ref{p.gen.poincare} and Proposition~\ref{p.boundary.data},
\begin{align*}
\|\s^{\nicefrac12}\nabla w_z\|_{\L^2(z+\cu_n)} + \|\nabla w_z\|_{\underline{\hat{H}}^{-s}(z+\cu_n)} + \|\a \nabla w_z\|_{\underline{\hat{H}}^{-s}(z+\cu_n)} \leq C \frac{1 + \Lambda_{s}^{\nicefrac12}(z+\cu_n)}{\lambda_{s-\epsilon}^{\nicefrac12}(z+\cu_n)} 3^{sn} [\mathbf{f}]_{\underline{H}^{s}(z+\cu_n)}\,,
\end{align*}
and therefore, using also~\eqref{e.lambda.by.mathcalE},
\begin{align*}
\|(\a-\ahom)\nabla u \|_{\underline{\hat{H}}^{-s}(z+\cu_n)} & \leq 3^{sn}\mathcal{E}_s^{\nicefrac{1}{2}}(z+\cu_n) \|\s^{\nicefrac12}\nabla u\|_{\L^2(z+\cu_n)} \\
& \quad + C 3^{sn}(1 + \mathcal{E}_{s-\epsilon}^{\nicefrac{1}{2}}(z+\cu_n)) 3^{sn} [\mathbf{f}]_{\underline{H}^{s}(z+\cu_n)} \,.
\end{align*}
We partition~$\cu_0$ into cubes at scale~$3^n$ to conclude that
\begin{align*}
\| (\a-\ahom)\nabla u\|_{\underline{\hat{H}}^{-s}(\cu_0)} &  \leq 3^{-sn} \biggl(\avsum_{z\in 3^n\mathbb{Z}^d \cap \cu_0} \|(\a-\ahom)\nabla u\|_{\underline{\hat{H}}^{-s}(z+\cu_n)}^2 \biggr)^{\nicefrac12} \\
& \leq C \|\s^{\nicefrac12}\nabla u\|_{\L^2(\cu_0)} \sup_{z\in 3^n\mathbb{Z}^d \cap \cu_0}\mathcal{E}_s^{\nicefrac{1}{2}}(z+\cu_n) \\
& \quad + C 3^{sn}[\mathbf{f}]_{\underline{H}^{s}(\cu_0)}\sup_{z\in 3^n\mathbb{Z}^d \cap \cu_0}(1 + \mathcal{E}_{s-\epsilon}^{\nicefrac{1}{2}}(z+\cu_n)) \,.
\end{align*}
\end{proof}


It is straightforward to combine the last two propositions to obtain Theorem~\ref{t.black.box}, which we then use to prove Corollary~\ref{c.homogenization}.

\begin{proof}[Proof of Theorem~\ref{t.black.box}]
Suppose that
\begin{align*}
\left\{
\begin{aligned}
& \nabla \cdot \a \nabla u = \nabla \cdot \ahom\nabla h \quad &\mbox{in } \cu_0 \,, \\
& u=h \quad &\mbox{on } \partial \cu_0\,.
\end{aligned}
\right.
\end{align*}
Then combining Proposition~\ref{p.homogenize}, Proposition~\ref{p.fluxdiff}, Proposition~\ref{p.boundary.data} and~\eqref{e.lambda.by.mathcalE}
\begin{align*}
\lefteqn{
\|\nabla u - \nabla h \|_{\underline{\hat{H}}^{-s}(\cu_0)} + \|\a\nabla u - \ahom \nabla h\|_{\underline{\hat{H}}^{-s}(\cu_0)}
}\qquad \qquad \qquad & \\
& \leq C\|(\a-\ahom)\nabla u\|_{\underline{\hat{H}}^{-s}(\cu_0)} \\
& \leq C \|\s^{\nicefrac12}\nabla u\|_{\L^2(\cu_0)} \sup_{z\in 3^n\mathbb{Z}^d \cap \cu_0}\mathcal{E}_s^{\nicefrac{1}{2}}(z+\cu_n) \\
& \quad + C 3^{sn}[\ahom\nabla h]_{\underline{H}^{s}(\cu_0)}\sup_{z\in 3^n\mathbb{Z}^d \cap \cu_0}(1 + \mathcal{E}_{s-\epsilon}^{\nicefrac{1}{2}}(z+\cu_n)) \\
& \leq C(1 + \mathcal{E}_{s-\epsilon}(\cu_0) )\|\nabla h\|_{\underline{H}^s(\cu_0)}\sup_{z\in 3^n\mathbb{Z}^d \cap \cu_0}\mathcal{E}_s^{\nicefrac{1}{2}}(z+\cu_n) \\
& \quad + C 3^{sn}[\ahom\nabla h]_{\underline{H}^{s}(\cu_0)}\sup_{z\in 3^n\mathbb{Z}^d \cap \cu_0}(1 + \mathcal{E}_{s-\epsilon}^{\nicefrac{1}{2}}(z+\cu_n)) \\
& \leq C\|\nabla h\|_{\underline{H}^s(\cu_0)}\sup_{z\in 3^n\mathbb{Z}^d \cap \cu_0}(1 + \mathcal{E}_{s-\epsilon}^{\nicefrac12}(z+\cu_n))\big(3^{sn} + \sup_{z\in 3^n\mathbb{Z}^d \cap \cu_0} \mathcal{E}_s^{\nicefrac{1}{2}}(z + \cu_n))\big)\,,
\end{align*}
which concludes the proof.
\end{proof}

\begin{proof}[Proof of Corollary~\ref{c.homogenization}.]
Let~$\alpha \in (\max\{s,t\},1)$,~$h\in H^{1+\alpha}(\cu_0)$ and for each~$\epsilon \in (0,1)$, let~$u^\epsilon \in H^1_{\s}(\cu_0)$ denote the unique solution to~\eqref{e.eqn.hom.cor}. Given any domain~$V$, let~$\bfA^\epsilon(V)$ denote the coarse-graining of~$\a^\epsilon$ in~$V$; then directly from the definition of~$J(V,p,q)$ in~\eqref{e.J.def} it follows that~$\bfA^\epsilon(V) = \bfA(\epsilon^{-1}V)$. Theorem~\ref{t.black.box} yields, for~$\delta = \frac{\alpha - \max\{s,t\}}{2}$ and any~$n\in \mathbb{Z} \cap (-\infty, 0]$,
\begin{align}
\label{e.from.thm}
\lefteqn{
\|\nabla u - \nabla h \|_{\underline{\hat{H}}^{-\alpha}(\cu_0)} + \|\a\nabla u - \ahom \nabla h\|_{\underline{\hat{H}}^{-\alpha}(\cu_0)}
}\qquad & \notag \\ 
& \leq C\|\nabla h\|_{\underline{H}^{\alpha}(\cu_0)}\sup_{z\in 3^n\mathbb{Z}^d \cap \cu_0}(1 + \mathcal{E}_{\alpha-\delta}^{\nicefrac12}(\epsilon^{-1}z+\epsilon^{-1}\cu_n))\big(3^{\alpha n} + \sup_{z\in 3^n\mathbb{Z}^d \cap \cu_0} \mathcal{E}_{\alpha}^{\nicefrac{1}{2}}(\epsilon^{-1}z + \epsilon^{-1}\cu_n)\big)\,.
\end{align}

Let~$m = -\log_3 \epsilon$ and fix a scale separation~$l\in\mathbb{N}$. Then with the sum over~$\{k\in\mathbb{R}: 3^k = 3^{m+j}\,, j \in \mathbb{Z}\cap (-\infty,0]\}$ we have for any~$\alpha > \max\{s,t\}$
\begin{align}
\label{e.top.scales}
\mathcal{E}_\alpha(3^m z + \cu_{n+m}) &\leq 3^{-2\alpha l}\sum_{k=-\infty}^{n+m-l} 3^{2\alpha(k-n-m+l)} \sup_{y\in 3^k\mathbb{Z}^d \cap (3^m z + \cu_{n+m})} |\bfA(y+\cu_k) -\bfAhom| \notag \\
& \quad \quad + \sum_{k=n+m-l+1}^{n+m} 3^{2\alpha (k-m-n)} \sup_{y\in 3^k\mathbb{Z}^d \cap (3^m z + \cu_{n+m})} |\bfA(y+\cu_k) -\bfAhom| \notag \\
& \leq C3^{-2(\alpha - \max\{s,t\})l}\bigl(\Lambda_{t}(3^m z + \cu_{n+m}) +\lambda_{s}^{-1}(3^m z + \cu_{n+m})\bigr) \notag \\
& \quad \quad + \sum_{k=n+m-l+1}^n 3^{2\alpha(k-m-n)} \sup_{y\in 3^k\mathbb{Z}^d \cap (3^m z + \cu_{n+m})} |\bfA(y+\cu_k) -\bfAhom|\,.
\end{align}
In the second term there are finitely many random variables~$\bfA(y+\cu_k)$ at each of~$l$ scales. Moreover from the definition of~$y+\cu_k$, the dilations~$m\to\infty$ (corresponding to~$\epsilon \to 0$) simply dilate the domains~$y+\cu_k$. By Proposition~\ref{p.Ahom} each term converges to zero almost surely; it follows that the finite sum over~$l$ scales converges to zero almost surely as~$m\to\infty$ ($\epsilon \to 0$), and the same conclusion holds if we take a supremum of~$\mathcal{E}_\alpha(3^m z + \cu_{n+m})$ over the finite set~$z\in 3^n\mathbb{Z}^d \cap \cu_0$. To bound the first term in~\eqref{e.top.scales} we use
\begin{equation*}
\sup_{z\in 3^n\mathbb{Z}^d \cap \cu_0} \Lambda_t(3^m z + \cu_{n+m}) \leq 3^{-2tn}\Lambda_t(\cu_{m}) \quad \mbox{and} \quad \sup_{z\in 3^n\mathbb{Z}^d \cap \cu_0} \lambda_s^{-1}(3^m z + \cu_{n+m}) \leq 3^{-2sn}\lambda_s^{-1}(\cu_{m})\,,
\end{equation*}
which follows from the definitions in~\eqref{e.lambda.def} and~\eqref{e.Lambda.def}. Then  for any~$l\in\mathbb{N}$ we have
\begin{align*}
\lefteqn{
\limsup_{\epsilon \to 0}\sup_{z\in 3^n\mathbb{Z}^d \cap \cu_0} \mathcal{E}_{\alpha}(\epsilon^{-1}z + \epsilon^{-1}\cu_n)\big)
} & \\
&\qquad \qquad \leq C3^{-2(\alpha - \max\{s,t\})l} 3^{-2\max\{s,t\}n} \limsup_{m\to\infty} (\Lambda_t(\cu_m) + \lambda_s^{-1}(\cu_m) )\,.
\end{align*}
Choosing~$l(n) = -\gamma n$ for~$\gamma > \frac{\max\{s,t\}}{\alpha - \max\{s,t\}}$ we see that
\begin{equation}
\lim_{n\to -\infty}3^{-2(\alpha - \max\{s,t\})l(n)} 3^{-2\max\{s,t\}n} = 0\,,
\end{equation}
and from our ellipticity assumption
\begin{equation*}
\mathbb{P}\biggl[ \limsup_{m\to\infty} \bigl(\Lambda_{t}(\cu_m) +\lambda_{s}^{-1}(\cu_m)\bigr) < \infty \biggr] = 1\,.
\end{equation*}
Putting the above estimates together and sending first~$m\to\infty$ and then~$n\to -\infty$ in~\eqref{e.top.scales} we obtain
\begin{equation*}
\mathbb{P}\biggl[ \lim_{\epsilon \to 0}\sup_{z\in 3^n\mathbb{Z}^d \cap \cu_0} \mathcal{E}_{\alpha}(\epsilon^{-1}z + \epsilon^{-1}\cu_n)\big) = 0\biggr] = 1\,,
\end{equation*}
which concludes the proof of the corollary when combined with~\eqref{e.from.thm}.
\end{proof}

\appendix

\section{Functional inequalities}

Recall that the norm~$\|\cdot\|_{\Hring^{-s}(\cu_n)}$ is defined in~\eqref{e.Hring.def}.
\begin{proposition}
\label{p.poincare}
There exists a constant~$C=C(d)$ such that
\begin{equation*}
\|u-(u)_{\cu_n}\|_{\L^2(\cu_n)} \leq C \|\nabla u\|_{\underline{H}^{-1}(\cu_n)}\,,
\end{equation*}
and for any~$s\in (0,1]$ there exists a constant~$C=C(d,s)$ such that
\begin{equation*}
\|u-(u)_{\cu_n}\|_{\underline{H}^s(\cu_n)} \leq C \|\nabla u\|_{\Hring^{-(1-s)}(\cu_n)}\,.
\end{equation*}
\end{proposition}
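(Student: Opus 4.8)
The plan is to establish the $s=1$ estimate first by a divergence--equation argument, and then bootstrap it to the fractional case by localising to all triadic subcubes.

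\emph{First inequality ($s=1$).} Since $u-(u)_{\cu_n}$ has mean zero it suffices to bound $\fint_{\cu_n}(u-(u)_{\cu_n})g$ for test functions $g\in C^\infty_c(\cu_n)$ with $(g)_{\cu_n}=0$ and $\|g\|_{\L^2(\cu_n)}\le 1$, and then take the supremum. Given such a $g$, I would solve the divergence equation $\nabla\cdot\mathbf G=g$ in $\cu_n$, with $\mathbf G$ in the closure of $C^\infty_c(\cu_n)^d$ under the $\underline{H}^1$-norm: the Bogovskii construction on a convex domain, together with the elementary behaviour of its estimate under dilation, gives $\|\nabla\mathbf G\|_{\L^2(\cu_n)}\le C(d)\|g\|_{\L^2(\cu_n)}$, and the classical Poincar\'e inequality for zero-trace functions then gives $\|\mathbf G\|_{\L^2(\cu_n)}\le C(d)\,3^n\|g\|_{\L^2(\cu_n)}$, so that $\|\mathbf G\|_{\underline{H}^1(\cu_n)}=3^{-n}\|\mathbf G\|_{\L^2(\cu_n)}+\|\nabla\mathbf G\|_{\L^2(\cu_n)}\le C(d)$. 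Integrating by parts (the boundary term vanishes since $\mathbf G$ has zero trace, and $\nabla(u-(u)_{\cu_n})=\nabla u$) gives $\fint_{\cu_n}(u-(u)_{\cu_n})g=-\fint_{\cu_n}\nabla u\cdot\mathbf G\le \|\nabla u\|_{\underline{H}^{-1}(\cu_n)}\|\mathbf G\|_{\underline{H}^1(\cu_n)}\le C(d)\|\nabla u\|_{\underline{H}^{-1}(\cu_n)}$, which is the claim after taking the supremum over $g$.

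\emph{Second inequality.} Write $\|u-(u)_{\cu_n}\|_{\underline{H}^s(\cu_n)}=3^{-sn}\|u-(u)_{\cu_n}\|_{\L^2(\cu_n)}+[u-(u)_{\cu_n}]_{\underline{H}^s(\cu_n)}$ and bound the two pieces separately. For the zero-mode piece, combine the first inequality with the bound $\|\nabla u\|_{\underline{H}^{-1}(\cu_n)}\le C(d)\|\nabla u\|_{\Hring^{-1}(\cu_n)}$ from~\eqref{e.Hring.def} and the elementary estimate $3^{2k}\le 3^{2sn}3^{2(1-s)k}$ for $k\le n$, which yields $\|\nabla u\|_{\Hring^{-1}(\cu_n)}\le 3^{sn}\|\nabla u\|_{\Hring^{-(1-s)}(\cu_n)}$, hence $3^{-sn}\|u-(u)_{\cu_n}\|_{\L^2(\cu_n)}\le C(d)\|\nabla u\|_{\Hring^{-(1-s)}(\cu_n)}$. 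For the seminorm, first note that adding a constant to $u$ changes neither $[\,\cdot\,]_{\underline{H}^s(\cu_n)}$ nor the quantities $u-(u)_{z+\cu_k}$ appearing in~\eqref{e.Besov.def}, so $[u-(u)_{\cu_n}]_{\underline{H}^s(\cu_n)}=[u]_{\underline{H}^s(\cu_n)}$; on each triadic subcube $z+\cu_k$ I apply the translated and rescaled first inequality followed again by~\eqref{e.Hring.def} (valid on any triadic cube by scaling) to get $\|u-(u)_{z+\cu_k}\|_{\L^2(z+\cu_k)}\le C(d)\|\nabla u\|_{\Hring^{-1}(z+\cu_k)}$, and substitute this into~\eqref{e.Besov.def}:
\[
[u]_{\underline{H}^s(\cu_n)}^2 \;\le\; C(d)\sum_{k=-\infty}^{n} 3^{-2sk} \avsum_{z\in 3^{k-1}\Z^d,\, z+\cu_k\subseteq\cu_n}\ \sum_{j=-\infty}^{k} 3^{2j} \avsum_{w\in 3^{j-1}\Z^d,\, w+\cu_j\subseteq z+\cu_k} \bigl|(\nabla u)_{w+\cu_j}\bigr|^2 .
\]
Interchanging the two scale sums and using that a fixed cube $w+\cu_j$ is contained in at most $C(d)$ of the (overlapping) cubes $z+\cu_k$ coming from the finer lattice $3^{k-1}\Z^d$ collapses the double average into $\avsum_{w\in 3^{j-1}\Z^d,\, w+\cu_j\subseteq\cu_n}|(\nabla u)_{w+\cu_j}|^2$ up to a factor $C(d)$, leaving $\sum_{j\le n}3^{2j}\bigl(\sum_{k=j}^{n}3^{-2sk}\bigr)$ times that average; since $s>0$ the inner geometric sum is at most $(1-3^{-2s})^{-1}3^{-2sj}$, so the right-hand side is bounded by $C(d)(1-3^{-2s})^{-1}\sum_{j\le n}3^{2(1-s)j}\avsum_w|(\nabla u)_{w+\cu_j}|^2=C(d,s)\|\nabla u\|_{\Hring^{-(1-s)}(\cu_n)}^2$. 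Combining the two pieces proves the second inequality for $s\in(0,1)$; the endpoint $s=1$ is either contained in the first inequality or vacuous, since $\|\nabla u\|_{\Hring^{0}(\cu_n)}$ is finite only when $\nabla u\equiv 0$.

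\emph{Main obstacle.} The only genuinely analytic input sits in the first inequality: producing a test field $\mathbf G$ supported in $\cu_n$ solving $\nabla\cdot\mathbf G=g$ with the scale-invariant bound $\|\nabla\mathbf G\|_{\L^2(\cu_n)}\lesssim\|g\|_{\L^2(\cu_n)}$ is exactly the Bogovskii estimate, which is why one uses a domain star-shaped with respect to a ball. Everything in the fractional-scale step is bookkeeping --- interchanging scale sums, controlling the bounded overlap coming from the lattice $3^{k-1}\Z^d$, and summing a geometric series --- with the $s$-dependence of the constant entering only through $(1-3^{-2s})^{-1}$, which accounts for the shape $C=C(d,s)$ in the statement.
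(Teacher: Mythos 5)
The paper proves this proposition only by citation to \cite[Lemmas A.2, A.3]{AK.HC}, so there is no in-paper argument to compare against; I can only assess your proof on its own merits, and it is correct. Your first inequality is established by duality: reduce to solving $\nabla\cdot\mathbf G = g$ with $\mathbf G$ compactly supported and a scale-invariant bound, for which the Bogovskii operator on the cube is exactly the right tool; the integration by parts then closes the argument. Note that it is essential that $\mathbf G$ lie in the $\underline{H}^1$-closure of $C^\infty_c(\cu_n)^d$ (and not merely in $\underline{H}^1$, which is what a naive choice like $\mathbf G=-\nabla v$ with $-\Delta v = g$, $v|_{\partial\cu_n}=0$ would give you), since the norm $\|\cdot\|_{\underline{H}^{-1}(\cu_n)}$ in the statement is the one dual to compactly supported test functions; your appeal to Bogovskii rather than elliptic regularity is precisely what makes this work. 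Your bootstrap to the fractional case is also correct: the splitting into zero-mode and seminorm pieces, the power-count $3^{2k}\le 3^{2sn}3^{2(1-s)k}$ for $k\le n$ in the zero-mode piece, the localization of the $\L^2$ estimate to each triadic subcube, the interchange of scale sums with bounded overlap of the $3^{k-1}\Z^d$ lattice, and the geometric series $\sum_{k\ge j}3^{-2sk}\le (1-3^{-2s})^{-1}3^{-2sj}$ all check out. Your remark about the $s=1$ endpoint being either contained in the first inequality or vacuous is also accurate (and consistent with the way the paper invokes ``$s=1$'' in Proposition~\ref{p.cutoff.func}, where what is actually used is the first inequality together with \eqref{e.Hring.def}). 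One small point of hygiene you have handled correctly but should keep explicit: after translating to a subcube $z+\cu_k$ with $z\in 3^{k-1}\Z^d$, the relevant sub-lattice $z+3^{j-1}\Z^d$ coincides with $3^{j-1}\Z^d$ for $j\le k$, so the localized $\Hring^{-1}(z+\cu_k)$ sum really is the restriction of the global one, which is what makes the interchange of scales clean.
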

\begin{proof}
The proof is a straightforward modification of~\cite[Lemma A.2, Lemma A.3]{AK.HC}.
\end{proof}

\begin{proposition}
\label{p.coarse.poincare}
For any~$s\in (0,1)$ there exists a constant~$C=C(d,s)$ such that for any~$n\in\mathbb{N}$ and~$u\in H^1_{\s}(\cu_n)$ satisfying~$-\nabla \cdot \a \nabla u = 0$ we have
\begin{align*}
\|\nabla u\|_{\Hring^{-s}(\cu_n)} & \leq C \lambda_{s}^{-\nicefrac12}(\cu_n) \|\s^{\nicefrac12}\nabla u\|_{\L^2(\cu_n)} \\
\|\a\nabla u\|_{\Hring^{-s}(\cu_n)} & \leq C \Lambda_{s}^{\nicefrac12}(\cu_n) \|\s^{\nicefrac12}\nabla u\|_{\L^2(\cu_n)}\,.
\end{align*}
\end{proposition}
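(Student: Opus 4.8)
The plan is to prove both bounds by a single cube‑by‑cube argument, the only difference being which of the two coarse‑graining inequalities in~\eqref{e.cg.ineq} one invokes; I describe the gradient estimate and indicate the verbatim changes for the flux. The point is that $\|\nabla u\|_{\Hring^{-s}(\cu_n)}$ is, by~\eqref{e.Hring.def}, a weighted $\ell^2$–sum over triadic scales $k\le n$ and subcubes $z+\cu_k\subseteq\cu_n$ of the squared averages $|(\nabla u)_{z+\cu_k}|^2$, and on each such subcube $u$ still solves $-\nabla\cdot\a\nabla u=0$. So once the coarse‑graining inequality has been applied in every subcube, the remaining work is pure triadic bookkeeping.

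\emph{Step 1 (local bound).} Fix $z+\cu_k\subseteq\cu_n$. The function $u-(u)_{z+\cu_k}$ lies in $\mathcal{A}(z+\cu_k)$, so the first line of~\eqref{e.cg.ineq} gives
\[
(\nabla u)_{z+\cu_k}\cdot\s_*(z+\cu_k)\,(\nabla u)_{z+\cu_k}\le\fint_{z+\cu_k}\nabla u\cdot\s\nabla u=\|\s^{\nicefrac12}\nabla u\|_{\L^2(z+\cu_k)}^2 .
\]
As $\s_*(z+\cu_k)$ is symmetric positive definite, $|v|^2\le|\s_*^{-1}(z+\cu_k)|\,\bigl(v\cdot\s_*(z+\cu_k)v\bigr)$ for every $v\in\mathbb{R}^d$, whence $|(\nabla u)_{z+\cu_k}|^2\le|\s_*^{-1}(z+\cu_k)|\,\|\s^{\nicefrac12}\nabla u\|_{\L^2(z+\cu_k)}^2$. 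The identical computation with the second line of~\eqref{e.cg.ineq} and $\b(z+\cu_k)$ in place of $\s_*(z+\cu_k)$ gives $|(\a\nabla u)_{z+\cu_k}|^2\le|\b(z+\cu_k)|\,\|\s^{\nicefrac12}\nabla u\|_{\L^2(z+\cu_k)}^2$.

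\emph{Step 2 (resummation).} Insert the Step 1 bound into~\eqref{e.Hring.def}. In the inner average over $\{z\in 3^{k-1}\mathbb{Z}^d:z+\cu_k\subseteq\cu_n\}$, pull the ellipticity factor out by its maximum over $z$ and use the finite‑overlap structure of the cube family $\{z+\cu_k\}$ (overlap multiplicity at most $3^d$, and cardinality of the index set times $|\cu_k|$ comparable to $|\cu_n|$) to obtain $\avsum_{z}\|\s^{\nicefrac12}\nabla u\|_{\L^2(z+\cu_k)}^2\le C(d)\,\|\s^{\nicefrac12}\nabla u\|_{\L^2(\cu_n)}^2$. This produces
\[
\|\nabla u\|_{\Hring^{-s}(\cu_n)}^2\le C(d)\,\|\s^{\nicefrac12}\nabla u\|_{\L^2(\cu_n)}^2\sum_{k=-\infty}^{n}3^{2sk}\max_{z\in 3^{k-1}\mathbb{Z}^d,\,z+\cu_k\subseteq\cu_n}|\s_*^{-1}(z+\cu_k)| ,
\]
and the same inequality with $|\b(z+\cu_k)|$ in place of $|\s_*^{-1}(z+\cu_k)|$ for the flux.

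\emph{Step 3 (matching $\lambda_s$, $\Lambda_s$).} It remains to recognise this scale sum as a constant multiple of $\lambda_s^{-1}(\cu_n)$ (respectively $\Lambda_s(\cu_n)$), up to the scaling normalisation of $\|\cdot\|_{\Hring^{-s}(\cu_n)}$. The only mismatch with~\eqref{e.lambda.def}--\eqref{e.Lambda.def} is that the maximum runs over $3^{k-1}\mathbb{Z}^d$ rather than $3^k\mathbb{Z}^d$. To fix this, observe that any $z+\cu_k$ with $z\in 3^{k-1}\mathbb{Z}^d$ partitions into the $3^d$ cubes $(z+w)+\cu_{k-1}$ with $w\in\{-3^{k-1},0,3^{k-1}\}^d$, all centred on $3^{k-1}\mathbb{Z}^d$, so subadditivity of $\bfA$ in~\eqref{e.bfA.subadd}, applied to its lower‑right block (respectively upper‑left block), gives $|\s_*^{-1}(z+\cu_k)|\le\max_w|\s_*^{-1}((z+w)+\cu_{k-1})|$. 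Taking the maximum over $z$ and shifting the summation index $k\mapsto k+1$ collapses the sum onto $C(d,s)\,3^{2sn}\lambda_s^{-1}(\cu_n)$, which is the asserted bound; the flux estimate follows verbatim with $\s_*^{-1}$, $\lambda_s$ replaced by $\b$, $\Lambda_s$. I expect the triadic bookkeeping in this last step — aligning the maximum over the auxiliary lattice $3^{k-1}\mathbb{Z}^d$ with the quantity fixed in~\eqref{e.lambda.def}--\eqref{e.Lambda.def} and tracking the powers of $3$ — to be the only genuinely delicate point; Steps 1 and 2 are routine consequences of the coarse‑graining inequalities and the geometry of triadic cubes.
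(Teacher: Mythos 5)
Your argument is correct and, as far as I can tell, reproduces the standard proof of this coarse-grained Caccioppoli estimate: apply the coarse-graining inequalities~\eqref{e.cg.ineq} subcube-by-subcube, pull out the maximum of the ellipticity factor at each scale, sum using the bounded overlap ($3^d$) of the cube family $\{z+\cu_k : z\in 3^{k-1}\mathbb{Z}^d\}$, and align the auxiliary lattice $3^{k-1}\mathbb{Z}^d$ in $\|\cdot\|_{\Hring^{-s}(\cu_n)}$ with the lattice $3^k\mathbb{Z}^d$ appearing in $\lambda_s$, $\Lambda_s$ via subadditivity of $\bfA$ (restricted to the relevant principal block, which is legitimate since matrix ordering passes to principal submatrices) and an index shift $k\mapsto k-1$. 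The paper itself only cites~\cite[Lemma 2.2]{AK.HC} here, so there is no in-text proof to compare against, but I see nothing your argument would be doing differently. One point worth flagging: your Step~3 correctly produces a factor $3^{2sn}$ in the final scale sum (so a prefactor $3^{sn}$ on the right-hand side of the estimate), which the proposition statement as written omits. That factor must be present by dimensional analysis, since $\|\cdot\|_{\Hring^{-s}(\cu_n)}$ carries the weight $3^{2sk}$ while $\lambda_s^{-1}(\cu_n)$ and $\Lambda_s(\cu_n)$ carry the normalized weight $3^{2s(k-n)}$; indeed, it reappears as $3^{sk}$ every time the proposition is invoked on a cube $z+\cu_k$ in the proof of Proposition~\ref{p.boundary.data}. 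So the missing $3^{sn}$ is a typo in the paper's statement of the proposition, not a gap in your proof.
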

\begin{proof}
See~\cite[Lemma 2.2]{AK.HC}.
\end{proof}

\begin{proposition}
\label{p.cutoff.func}
Suppose that~$n\in\mathbb{N}$ and~$\varphi \in C^\infty(\cu_n)$ such that~$\varphi|_{\cu_{n-1}} = 1$,~$\|\nabla \varphi\|_{L^\infty(\cu_n)} \leq C3^{-n}$ and $\|\nabla^2\varphi\|_{\L^\infty(\cu_n)} \leq C3^{-2n}$. Then for any~$t\in (0,1)$ and~$u$ such that~$(u)_{\cu_n} = 0$,
\begin{equation*}
\|u\nabla \varphi\|_{\underline{H}^t(\cu_n)} \leq C 3^{-n}\|\nabla u\|_{\underline{H}^{-(1-t)}(\cu_n)}\,.
\end{equation*}
\end{proposition}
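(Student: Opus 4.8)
The plan is to establish a product estimate for $u\nabla\varphi$ and then feed it into a Poincar\'e inequality. Working from the definition~\eqref{e.Besov.def} of the seminorm, fix a subcube $z+\cu_k\subseteq\cu_n$ and write
\[
u\nabla\varphi-(u\nabla\varphi)_{z+\cu_k}=\Bigl(\nabla\varphi\,\bigl(u-(u)_{z+\cu_k}\bigr)-\bigl(\nabla\varphi\,(u-(u)_{z+\cu_k})\bigr)_{z+\cu_k}\Bigr)+(u)_{z+\cu_k}\bigl(\nabla\varphi-(\nabla\varphi)_{z+\cu_k}\bigr).
\]
The first group has $\L^2(z+\cu_k)$-norm at most $\|\nabla\varphi\|_{L^\infty(\cu_n)}\,\|u-(u)_{z+\cu_k}\|_{\L^2(z+\cu_k)}\le C3^{-n}\|u-(u)_{z+\cu_k}\|_{\L^2(z+\cu_k)}$ (subtracting the mean only lowers the $\L^2$-norm). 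For the second group I would apply the Poincar\'e inequality to $\nabla\varphi$ on $z+\cu_k$ and use the bound on $\nabla^2\varphi$: $\|\nabla\varphi-(\nabla\varphi)_{z+\cu_k}\|_{\L^2(z+\cu_k)}\le C3^{k}\|\nabla^2\varphi\|_{L^\infty(\cu_n)}\le C3^{k-2n}$, so that group contributes at most $C3^{k-2n}|(u)_{z+\cu_k}|$. Squaring, weighting by $3^{-2tk}$ (note $3^{-2tk}3^{2k}=3^{2(1-t)k}$) and summing over $k\le n$ and $z$ then gives
\[
[u\nabla\varphi]_{\underline{H}^t(\cu_n)}^2\le C3^{-2n}[u]_{\underline{H}^t(\cu_n)}^2+C3^{-4n}\sum_{k=-\infty}^{n}3^{2(1-t)k}\avsum_{z}|(u)_{z+\cu_k}|^2=C3^{-2n}[u]_{\underline{H}^t(\cu_n)}^2+C3^{-4n}\|u\|_{\Hring^{-(1-t)}(\cu_n)}^2,
\]
the last equality being the definition~\eqref{e.Hring.def}. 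The $L^2$-part is easy: $3^{-tn}\|u\nabla\varphi\|_{\L^2(\cu_n)}\le 3^{-tn}\|\nabla\varphi\|_{L^\infty(\cu_n)}\|u\|_{\L^2(\cu_n)}\le C3^{-n}\bigl(3^{-tn}\|u\|_{\L^2(\cu_n)}\bigr)$.

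The hypothesis $(u)_{\cu_n}=0$ now lets me turn these right-hand sides into a negative norm of $\nabla u$ through Proposition~\ref{p.poincare}. Directly, $[u]_{\underline{H}^t(\cu_n)}=[u-(u)_{\cu_n}]_{\underline{H}^t(\cu_n)}\le\|u-(u)_{\cu_n}\|_{\underline{H}^t(\cu_n)}\le C\|\nabla u\|_{\underline{H}^{-(1-t)}(\cu_n)}$, and the same bound controls $3^{-tn}\|u\|_{\L^2(\cu_n)}$. For the remaining term I would use the one-derivative-gain Poincar\'e estimate at regularity level $-(1-t)$, i.e.\ $\|u\|_{\Hring^{-(1-t)}(\cu_n)}=\|u-(u)_{\cu_n}\|_{\Hring^{-(1-t)}(\cu_n)}\le C3^{n}\|\nabla u\|_{\underline{H}^{-(1-t)}(\cu_n)}$, the single factor $3^n$ being the side length of $\cu_n$. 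Plugging the three estimates into the displays of the first step, each contribution to $\|u\nabla\varphi\|_{\underline{H}^t(\cu_n)}$ is bounded by $C3^{-n}\|\nabla u\|_{\underline{H}^{-(1-t)}(\cu_n)}$, which is the assertion. (One could equally rescale $\cu_n\to\cu_0$ and take $n=0$ from the start, since the two norms and the prefactor $3^{-n}$ are scale-consistent; this is purely cosmetic.)

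The obstacles here are quantitative bookkeeping rather than anything conceptual. The point in the product estimate is that the $\nabla^2\varphi$ bound—not just the $\nabla\varphi$ bound—is needed, so that the oscillation-of-$\nabla\varphi$ term decays like $3^{k-2n}$; this is what produces the $3^{-4n}$ that recombines with the $3^{2n}$ lost in the Poincar\'e control of $\|u\|_{\Hring^{-(1-t)}(\cu_n)}$ to give the final $3^{-2n}$. The other point is that one wants Proposition~\ref{p.poincare} with the $C_c^\infty$-dual norm $\underline{H}^{-(1-t)}(\cu_n)$ of the gradient, and in its negative-level form $-(1-t)$ with the sharp single power $3^n$; both follow from the proof of~\cite[Lemma A.3]{AK.HC}, and in any case the cruder statement with $\Hring^{-(1-t)}(\cu_n)$ would already be enough for the only use of this proposition, in Proposition~\ref{p.J.bound}.
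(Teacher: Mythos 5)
Your proof is correct and follows essentially the same route as the paper: the algebraic split of $u\nabla\varphi-(u\nabla\varphi)_{z+\cu_k}$ into a piece carrying the oscillation of $u$ against $\nabla\varphi$ and a piece carrying $(u)_{z+\cu_k}$ against the oscillation of $\nabla\varphi$, the use of the $\nabla^2\varphi$ bound (via Poincar\'e on each subcube) for the second piece, and a Poincar\'e inequality to convert $u$ into a negative norm of $\nabla u$. The only genuine divergence is bookkeeping for the second piece: the paper loses information with $|(u)_{z+\cu_k}|\leq\|u\|_{\L^2(z+\cu_k)}$ and sums to $\|u\|^2_{\L^2(\cu_n)}$, whereas you keep $|(u)_{z+\cu_k}|$, identify the weighted sum as $\|u\|^2_{\Hring^{-(1-t)}(\cu_n)}$, and invoke a negative-regularity Poincar\'e $\|u-(u)_{\cu_n}\|_{\Hring^{-(1-t)}(\cu_n)}\leq C3^n\|\nabla u\|_{\underline{H}^{-(1-t)}(\cu_n)}$. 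That estimate is not quite a verbatim quote of Proposition~\ref{p.poincare}, but it does hold: it follows by combining $\|u\|_{\Hring^{-(1-t)}(\cu_n)}\leq C3^{(1-t)n}\|u\|_{\L^2(\cu_n)}$, the zeroth-order case of Proposition~\ref{p.poincare}, and the elementary scale comparison $\|\nabla u\|_{\underline{H}^{-1}(\cu_n)}\leq C3^{tn}\|\nabla u\|_{\underline{H}^{-(1-t)}(\cu_n)}$ — which is in fact the same route through which the paper's own chain (via $\|u\|_{\L^2(\cu_n)}$) arrives at $3^{-2n}\|\nabla u\|^2_{\underline{H}^{-(1-t)}(\cu_n)}$, so the two variants produce the identical final factor. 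Your closing remark is also apt: both your argument and the paper's deliver $\Hring^{-(1-t)}(\cu_n)$ on the right-hand side, and since $\|\cdot\|_{\underline{H}^{-s}}\leq C\|\cdot\|_{\Hring^{-s}}$ goes only one way, this is the version actually established; it suffices in the sole application (Proposition~\ref{p.J.bound}), where the gradient and flux norms are in any case passed to $\Hring$ via Propositions~\ref{p.grad.norm} and~\ref{p.flux.norm}.
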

\begin{proof}
By the triangle inequality we have in each sub-cube
\begin{align*}
\|u\nabla \varphi & - (u\nabla \varphi)_{z+\cu_k}\|_{\L^2(z+\cu_k)} \\
& \leq 2\|u -(u)_{z+\cu_k}\|_{\L^2(z+\cu_k)} \|\nabla \varphi\|_{L^\infty(\cu_n)} + |(u)_{z+\cu_k}|\|\nabla \varphi -(\nabla \varphi)_{z+\cu_k}\|_{\L^2(z+\cu_k)}\,,
\end{align*}
so
\begin{align*}
\sum_{k=-\infty}^n 3^{-2tk} & \avsum_{z\in 3^{k-1}\mathbb{Z}^d, z+\cu_k \subseteq \cu_n} \|u\nabla \varphi - (u\nabla \varphi)_{z+\cu_k}\|_{\L^2(z+\cu_k)}^2 \\
& \leq C \|\nabla \varphi\|_{L^\infty(\cu_n)}^2 \sum_{k=-\infty}^n 3^{-2tk} \avsum_{z\in 3^{k-1}\mathbb{Z}^d, z+\cu_k \subseteq \cu_n} \|u- (u)_{z+\cu_k}\|_{\L^2(z+\cu_k)}^2 \\
& \qquad + C\sum_{k=-\infty}^n 3^{-2tk} \avsum_{z\in 3^{k-1}\mathbb{Z}^d, z+\cu_k \subseteq \cu_n} \|u\|_{\L^2(z+\cu_k)}^2 3^{2k}\|\nabla^2 \varphi\|_{L^\infty(\cu_n)}^2\,.
\end{align*}
Now by Proposition~\ref{p.poincare} with~$s=1$ and~\eqref{e.Hring.def} we control the first term by
\begin{align*}
\|\nabla \varphi\|_{L^\infty(\cu_n)}^2 \sum_{k=-\infty}^n & 3^{-2tk} \avsum_{z\in 3^{k-1}\mathbb{Z}^d, z+\cu_k \subseteq \cu_n} \|u- (u)_{z+\cu_k}\|_{\L^2(z+\cu_k)}^2 \\
& \leq 3^{-2n} \sum_{k=-\infty}^n 3^{-2tk} \avsum_{z\in 3^{k-1}\mathbb{Z}^d, z+\cu_k \subseteq \cu_n} \sum_{j=-\infty}^k 3^{2j} \avsum_{z'\in 3^{j-1}\mathbb{Z}^d, z'+\cu_j \subseteq z+\cu_k} |(\nabla u)_{z'+\cu_j}|^2 \\
& \leq 3^{-2n} \sum_{j=-\infty}^n 3^{2j} \sum_{k=j}^n 3^{-2tk} \avsum_{z'\in 3^{j-1}\mathbb{Z}^d, z'+\cu_j \subseteq \cu_n} |(\nabla u)_{z'+\cu_j}|^2 \\
& \leq C(t) 3^{-2n} \sum_{j=-\infty}^n 3^{2(1-t)j}\avsum_{z'\in 3^{j-1}\mathbb{Z}^d, z'+\cu_j \subseteq \cu_n} |(\nabla u)_{z'+\cu_j}|^2 \,.
\end{align*}
For our other term we use Proposition~\ref{p.poincare} with~$s=1-t$ to bound
\begin{align*}
\sum_{k=-\infty}^n 3^{-2tk} \avsum_{z\in 3^{k-1}\mathbb{Z}^d, z+\cu_k \subseteq \cu_n} \|u\|_{\L^2(z+\cu_k)}^2 3^{2k}\|\nabla^2 \varphi\|_{L^\infty(\cu_n)}^2 & \leq C(t)3^{-4n} \|u\|_{\L^2(\cu_n)}^2 \\
& \leq C(t)3^{-2n}\|\nabla u\|^2_{\underline{H}^{-(1-t)}(\cu_n)}\,.
\end{align*}
\end{proof}

\begin{proposition}
\label{p.multi.poincare}
For every~$s\in (0,1)$ there exists a constant~$C = C(d,s)$ such that for every~$n\in\mathbb{Z}$
\begin{equation}
\label{e.multi.poincare}
\|f\|_{\underline{\hat{H}}^{-s}(\cu_n)} \leq C\biggl(\sum_{k=-\infty}^{n} 3^{2s(k-n)} \avsum_{y\in 3^k\mathbb{Z}^d \cap \cu_n} |(f)_{y+\cu_{k}}|^2 \biggr)^{\nicefrac12}\,.
\end{equation}
\begin{proof}
This is~\cite[Lemma A.1]{AK.HC} in the case~$p=q=2$.
\end{proof}
\end{proposition}

\section{Proof of Example~\ref{ex.field}}
\label{appendix.ex}

In this appendix we prove the claim~\eqref{e.field.in.Besov} from Example~\ref{ex.field}. Recall that~$\{\varphi_j(\cdot)\}_{j=1}^\infty$ is a sequence of independent Gaussian fields such that on each cube~$z+\cu_j$ the field~$\varphi_j(\cdot)$ is a constant, given by a centred Gaussian random variable with variance~$\sigma^2$, and the values of~$\varphi_j$ on different cubes are independent. We let~$W_j(\cdot) = e^{\varphi_j(\cdot) - \nicefrac{\sigma^2}{2}}$, and define
\begin{equation*}
f_m(\cdot) =  \prod_{j=1}^m W_j(\cdot)
\qquad
\mbox{and}
\qquad
f(\cdot) = \sum_{m=1}^\infty \frac{f_m(\cdot)}{m^{3}}\,,
\end{equation*}
where the convergence is in~$L^1(\Omega; L^1_{\mathrm{loc}}(\mathbb{R}^d))$. Our goal is to prove the following:
\begin{proposition}
\label{p.example.Besov}
If~$\sigma < 1$ and
\begin{equation}
\label{e.t.condition}
\frac{\sigma}{4}(\sqrt{8d}-\sigma) < t < 1
\end{equation}
then
\begin{equation}
\mathbb{P}\biggl[ \limsup_{n\to\infty} \|f\|_{\Bring_{\infty,1}^{-2t}(\cu_n)} < \infty \biggr] = 1\,.
\end{equation}
\end{proposition}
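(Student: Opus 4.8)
Since each $f_m$, and hence $f$, is nonnegative, the Besov-type norm is
\[
\|f\|_{\Bring_{\infty,1}^{-2t}(\cu_n)} \;=\; \sum_{k=-\infty}^{n} 3^{2t(k-n)}\,\mathcal M_k^{(n)}\,,\qquad \mathcal M_k^{(n)} := \max_{z\in 3^k\mathbb Z^d\cap\cu_n}\ \fint_{z+\cu_k} f\,.
\]
By the triadic nesting the fields $W_1,W_2,\dots$ are constant on every scale-$1$ cube, so $f$ itself is constant on each scale-$1$ cube; hence for $k\le 1$ the average $\fint_{z+\cu_k}f$ is just the value of $f$ on the scale-$1$ cube containing $z+\cu_k$, and for $1\le k\le n$ it is a finite average of such values. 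In particular $\mathcal M_k^{(n)}\le M^{(n)} := \max\{\, f|_{C} : C \text{ a scale-}1 \text{ cube meeting }\cu_n\,\}$ for all $k\le n$. The first step is to reduce everything to $M^{(n)}$ (and, at intermediate scales, to slightly finer quantities): I would show that for some $\gamma<2t$ depending on $\sigma$ and $d$, almost surely $\mathcal M_k^{(n)}\le C\,3^{\gamma(n-k)}$ for all $1\le k\le n$ and all large $n$; then $\sum_k 3^{2t(k-n)}\mathcal M_k^{(n)}\le C\sum_{i\ge 0}3^{(\gamma-2t)i}<\infty$ uniformly in $n$ (and for $k\le 1$ one uses $\mathcal M_k^{(n)}\le M^{(n)}$ together with $\sum_{k\le 1}3^{2t(k-n)}\le C3^{-2tn}$), which is the assertion. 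A convenient feature is that $\cu_n\subseteq\cu_{n+1}$, so $M^{(n)}$ is monotone and a growth bound along $n\in\mathbb N$ suffices.

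The heart of the matter is the estimate on $M^{(n)}$. Writing $f|_C=\sum_m m^{-3}e^{T_m(C)}$ with $T_m(C)=\sum_{j=1}^m\varphi_j|_{C}-m\sigma^2/2$, where $\varphi_j|_C$ is the value of $\varphi_j$ on the scale-$j$ cube containing $C$, one recognizes $\{T_m(C)\}$ as a \emph{branching random walk}: the scale-$j$ cubes inside $\cu_n$ form a tree of depth $\sim n$ with branching number $3^d$, in which passing to a scale-$(j{-}1)$ subcube adds an increment $\varphi_{j-1}\sim\mathcal N(0,\sigma^2)$, with an extra deterministic drift $-\sigma^2/2$ per generation. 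I would bound $\max_C\max_m T_m(C)$ — which, up to the polynomial factors $m^{-3}$ and the $m>n$ tail, controls $M^{(n)}$ — by the branching-random-walk first-moment bound: for a threshold $an$,
\[
\mathbb P\big[\max_{C}T_m(C) > an\big] \;\le\; \#\{\text{scale-}m\text{ cubes in }\cu_n\}\cdot\mathbb P\big[T_m(\text{single path}) > an\big]\,,
\]
and the probability on the right is estimated by the exponential Chebyshev inequality at the optimal tilt $\theta^{*}=\sqrt{2d\log 3}/\sigma$, which makes the product summable exactly when $a$ exceeds the speed $v^{*}=\sigma\sqrt{2d\log 3}-\sigma^2/2$; one checks that the extra maximization over $m\le n$ is worst at $m\approx n$, which is responsible for the form of the exponent in the condition ($\sqrt{8d}=2\sqrt{2d}$). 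Passing to base-$3$ exponents gives $M^{(n)}\le 3^{\gamma n+o(n)}$ with $\gamma=v^{*}/\log 3$, and $\gamma<2t$ is guaranteed by $\tfrac{\sigma}{4}(\sqrt{8d}-\sigma)<t$ (a deliberately non-optimal sufficient form, since no effort is made to track sharp constants). The finer bound $\mathcal M_k^{(n)}\le C3^{\gamma(n-k)}$ uses one extra observation: splitting $\fint_{z+\cu_k}f=\sum_m m^{-3}\fint_{z+\cu_k}f_m$ at $m=k$, the fine part $\sum_{m<k}m^{-3}\fint_{z+\cu_k}f_m$ and the cascade prefactor $\fint_{z+\cu_k}f_{k-1}$ are built only from the fields at scales $<k$ on the \emph{disjoint} cubes $z+\cu_k$, hence are independent across $z$, and — using $\sigma<1$, which places the multiplicative cascade in the subcritical regime with a uniform $q$-th moment bound for $q$ up to $\sim 2d\log3/\sigma^2$ — they have bounded $q$-th moment, so their maximum over the $\lesssim 3^{d(n-k)}$ cubes is $O(3^{d(n-k)/q})$ with overwhelming probability; the remaining coarse factor is again a branching random walk, now of depth $n-k$, handled as above. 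A Borel--Cantelli argument over $n$ then concludes.

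\textbf{Main obstacle.} The essential difficulty is that $f$ is only barely integrable — its pointwise tail decays merely like $\lambda^{-1}$, since $\|f_m\|_{L^p}$ grows exponentially in $m$ for every $p>1$ — so a naive union bound over the exponentially many ($\sim 3^{dn}$) cubes against a pointwise tail estimate diverges hopelessly. The hierarchical positive correlation between cubes \emph{must} be exploited: the $f$-values on the scale-$1$ cubes of $\cu_n$ form a multiplicative cascade whose branching-random-walk maximum grows only like $3^{\gamma n}$, with $\gamma$ the speed $v^{*}/\log 3$, strictly below the $d\log 3$ that independence would give, and it is precisely the smallness of $\gamma$ — i.e.\ $\sigma$ small relative to $\sqrt d$ and to $t$, as in $\tfrac{\sigma}{4}(\sqrt{8d}-\sigma)<t$ — that makes the weighted sum over scales converge. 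A secondary, purely technical, hurdle is the bookkeeping: isolating in each cube average the subcritical-cascade contribution (controlled by uniform moment bounds and independence across disjoint cubes) from the coarse log-normal contribution (controlled by the branching-random-walk bound), and checking that the weights $m^{-3}$ — which are what make $f$ heavy-tailed — cost only harmless polynomial-in-$n$ factors once one works with logarithms.
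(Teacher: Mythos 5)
Your BRW/multiplicative-cascade framing is a genuinely different way to organize the proof than the paper's. The paper decomposes the norm by the cascade level $m$ into the three pieces $A_m$, $B_m$, and the tail $m>\lceil n/2\rceil$ (display~\eqref{e.triangle.norm}), controls $\ell^p$ averages over cubes using the Rosenthal inequality (Lemma~\ref{l.pth.moment}), and then Markov plus Borel--Cantelli; you instead go straight for the pointwise maximum $M^{(n)}$ and the scale-$k$ maxima $\mathcal M_k^{(n)}$ via a first-moment (union-bound/Chernoff) estimate. Conceptually the BRW picture is illuminating: the condition \eqref{e.t.condition} is essentially twice the BRW speed $v^*/\log 3$ (the paper does not track the $\log 3$ from replacing $e$ by $3$ in $\mathbb E[W_j^p]=e^{p(p-1)\sigma^2/2}$, which is why their stated constant is $\sqrt{8d}$ rather than $\sqrt{8d/\log 3}$, a deliberate over-estimate). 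Both approaches ultimately rest on the same factoring $\fint_{z+\cu_k}f_m=(W_k\cdots W_m)|_{z+\cu_k}\cdot\fint_{z+\cu_k}f_{k-1}$ that separates coarse log-normal scales from the fine cascade prefactor, and the same kind of moment bound on that prefactor.

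There is, however, a genuine gap in how you propose to control $\mathcal M_k^{(n)}$. You decouple the estimate into $\max_z(\text{fine prefactor})_z\cdot\max_z(\text{coarse BRW factor})_z$, bounding the first by a $q$-th moment union bound ($O(3^{d(n-k)/q})$ with $q$ up to the cascade's critical exponent $2d\log 3/\sigma^2$) and the second by the BRW first-moment bound with its own optimal tilt $\theta^*=\sqrt{2d\log 3}/\sigma$. Because $q\neq\theta^*$, the decoupled bound is strictly worse than the joint $\ell^p$ bound: adding the exponents gives at best $3^{(\sigma\sqrt{2d/\log 3})(n-k)}$, whereas the joint $\ell^p$ computation (one tilt $p^*$ for both factors, using $\mathbb E\bigl[|\text{prefactor}|^{p}\cdot(\text{coarse})^{p}\bigr]=\mathbb E[|\text{prefactor}|^{p}]\,\mathbb E[(\text{coarse})^{p}]$ by independence) gives the sharper exponent $\sigma\sqrt{2d/\log 3}-\sigma^2/(2\log 3)$. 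The $-\sigma^2/(2\log 3)$ correction is exactly what the hypothesis \eqref{e.t.condition} is calibrated to accommodate. Numerically, already for $d=2$ and $\sigma=\nicefrac12$ the decoupled threshold ($2t>\sigma\sqrt{2d/\log 3}\approx 0.95$) exceeds the hypothesis ($2t>\sigma\sqrt{2d}-\sigma^2/2\approx 0.88$), so your argument as sketched does not close in the regime $t\in(0.44,0.48)$, and the gap widens as $\sigma\to 1$. To fix this you should run a single joint moment/Chernoff estimate on the products — which is precisely what Lemma~\ref{l.A.terms} and Lemma~\ref{l.B.terms} do, reducing your proof back to the paper's structure. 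A secondary (smaller) issue: you invoke the sharp cascade moment threshold $q\lesssim 2d\log 3/\sigma^2$ as known; the paper's Lemma~\ref{l.pth.moment} proves only the weaker $q<1+d/\sigma^2$ by a self-contained Rosenthal argument, and that weaker range is what is actually needed once the joint bound is used, so either cite the sharp subcritical result or use the Rosenthal route.
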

We will bound the norm by splitting it into three parts. For each~$m,n\in \mathbb{N}$ such that~$m \leq n$ we define
\begin{equation}
\label{e.Am.def}
A_{m}(\cu_n) := \sum_{k=-\infty}^{m-1} 3^{2t(k-n)} \max_{z\in 3^k\mathbb{Z}^d \cap \cu_n } \biggl|\fint_{z+\cu_k} f_m \biggr|\,,
\end{equation}
and
\begin{equation}
\label{e.Bm.def}
B_{m}(\cu_n) := \sum_{k=m}^n 3^{2t(k-n)} \max_{z\in 3^k\mathbb{Z}^d \cap \cu_n} \biggl|\fint_{z+\cu_k} f_m \biggr|\,,
\end{equation}
and we bound
\begin{equation}
\label{e.triangle.norm}
\|f\|_{\Bring_{\infty,1}^{-2t}(\cu_n)} \leq \sum_{m=1}^{\lceil \nicefrac{n}{2}\rceil} \frac{A_m(\cu_n)}{m^3} + \sum_{m=1}^{\lceil \nicefrac{n}{2} \rceil} \frac{B_m(\cu_n)}{m^3} + \sum_{m=\lceil \nicefrac{n}{2} \rceil+1}^\infty \frac{\|f_m\|_{\infty,1}^{-2t}(\cu_n)}{m^3}\,.
\end{equation}
In Lemma~\ref{l.A.terms} we will prove that the~$A_m(\cu_n)$ terms are small because of the scale discount factor. In Lemma~\ref{l.B.terms} we will prove that the~$B_m(\cu_n)$ terms are bounded because the scale separation~$m \leq \lceil \nicefrac{n}{2} \rceil$ ensures that on large scales the field~$f_m$ should concentrate around its average. Finally, we will prove in Lemma~\ref{l.expectation.uniform} that the Besov-type norms of~$f_m$ are uniformly controlled in expectation, and therefore the tail of the sum will be small. We will use the Gaussian integral calculation
\begin{equation}
\label{e.Gaussian.int}
\mathbb{E}[W_j^p] = e^{\nicefrac{p(p-1)\sigma^2}{2}}\,,
\end{equation}
and Rosenthal's inequality~\cite[Theorem 3]{Rosenthal1970}, which states that for every~$p\geq 2$ there exists a constant~$C=C(p)$, such that if~$X_1,\cdots, X_N$ are independent random variables then
\begin{equation*}
\mathbb{E}\biggl[ \biggl|\sum_{i=1}^N X_i \biggr|^p \biggr]^{\nicefrac{1}{p}} \leq C \max\bigg\{ \biggl( \sum_{i=1}^N \mathbb{E}[ |X_i|^p ] \biggr)^{\nicefrac{1}{p}},  \biggl( \sum_{i=1}^N \mathbb{E}[ |X_i|^2 ] \biggr)^{\nicefrac{1}{2}} \bigg\}\,.
\end{equation*}
Adding the averaging factors in the sums and controlling the second term by the~$L^p$ norm we obtain
\begin{equation}
\label{e.Rosenthal}
\mathbb{E}\biggl[ \biggl|\avsum_{i=1}^N X_i \biggr|^p \biggr]^{\nicefrac{1}{p}} \leq \frac{C}{N^{\nicefrac12}}\biggl( \avsum_{i=1}^N \mathbb{E}[ |X_i|^p ] \biggr)^{\nicefrac{1}{p}}\,,
\end{equation}
which is the form we will reference. We begin by proving a lemma which uses elements of the proof of~\cite[Theorem 6.1]{JSW2019}.
\begin{lemma}
\label{l.pth.moment}
If~$p \geq 2$ and
\begin{equation}
\label{e.sigma.cond.one}
\sigma^2 < \frac{d}{p-1}\,,
\end{equation}
then there exists a constant~$C=C(p,\sigma^2,d)$ such that for every~$k\in \mathbb{N}$
\begin{equation}
\mathbb{E}\biggl[\biggl| \fint_{\cu_k} f_{k-1} \biggr|^p \biggr]^{\nicefrac{1}{p}} \leq C\,.
\end{equation}
\end{lemma}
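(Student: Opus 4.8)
The plan is to exploit the multiplicative cascade structure of $f_{k-1}=\prod_{j=1}^{k-1}W_j$ over the triadic tree of subcubes and extract a self‑improving recursion for the moments. Write $X_j:=\fint_{\cu_j}f_{j-1}$, so $X_1=1$; since each $W_j$ has mean $1$ and the $W_j$ are independent, $\mathbb{E}[X_j]=1$ for every $j$, and from $f_{j-1}\ge0$, $\mathbb{E}[f_{j-1}(x)^p]=e^{(j-1)p(p-1)\sigma^2/2}$ and Minkowski's integral inequality the quantity $a_j:=\mathbb{E}[X_j^p]^{1/p}$ satisfies $a_j\le e^{(j-1)(p-1)\sigma^2/2}<\infty$ for each fixed $j$. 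The content of the lemma is the bound $\sup_j a_j<\infty$, uniform in $j$.

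Concretely, I would fix $L\in\mathbb{N}$ (to be chosen at the end, depending only on $p,\sigma^2,d$) and decompose $\cu_k$ into its $N:=3^{dL}$ triadic subcubes of scale $k-L$. On each $z+\cu_{k-L}$ the fields $W_{k-L},\dots,W_{k-1}$ are constant, so $\prod_{j=k-L}^{k-1}W_j$ equals a single random variable $V_z$ there, while $Y_z:=\fint_{z+\cu_{k-L}}f_{k-L-1}$ is distributed like $X_{k-L}$ by stationarity. This gives the exact identity $X_k=\tfrac1N\sum_z V_z Y_z$, where the $Y_z$ are i.i.d.\ copies of $X_{k-L}$ and independent of $(V_z)_z$ (they depend on $\varphi_1,\dots,\varphi_{k-L-1}$ on disjoint regions), and each $V_z$ is a product of $L$ independent copies of $W_1$ along a branch of the depth‑$L$ $3^d$‑ary tree. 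Hence $\mathbb{E}[V_z]=1$ and, by \eqref{e.Gaussian.int}, $\mathbb{E}[V_z^p]=e^{pL(p-1)\sigma^2/2}$; consequently $\mathbb{E}\bigl[\sum_z V_z^p\bigr]=Ne^{pL(p-1)\sigma^2/2}$, $\mathbb{E}\bigl[(\sum_z V_z^2)^{p/2}\bigr]\le\bigl(Ne^{L(p-1)\sigma^2}\bigr)^{p/2}$ (triangle inequality in $L^{p/2}$, valid as $p\ge2$), and $S:=\tfrac1N\sum_z V_z$ obeys $\mathbb{E}[S^p]^{1/p}\le e^{L(p-1)\sigma^2/2}=:B_1$.

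Next I would estimate $X_k-S=\tfrac1N\sum_z V_z(Y_z-1)$. Conditioning on $(V_z)_z$ this is a weighted sum of i.i.d.\ mean‑zero variables, so Rosenthal's inequality applied to $\{V_z(Y_z-1)\}_z$, together with the moment identities above and $\|Y_z-1\|_{L^p}\le a_{k-L}+1$, $\|Y_z-1\|_{L^2}\le\|Y_z-1\|_{L^p}$, gives after taking expectation in $(V_z)_z$
\begin{equation*}
\mathbb{E}\bigl[|X_k-S|^p\bigr]\ \le\ C(p)\,\bigl(\rho_1^{\,L(p-1)}+\rho_2^{\,Lp}\bigr)\,(a_{k-L}+1)^p,\qquad \rho_1:=3^{-d}e^{p\sigma^2/2},\quad \rho_2:=3^{-d/2}e^{(p-1)\sigma^2/2}.
\end{equation*}
Under the hypothesis $\sigma^2<\tfrac{d}{p-1}$ one checks $\rho_2<1$ (it suffices that $(p-1)\sigma^2<d\log3$) and $\rho_1<1$ (it suffices that $p\sigma^2<2d\log3$, which holds since $\tfrac{p}{p-1}\le2<2\log3$ for $p\ge2$). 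Therefore $\eta_L:=C(p)^{1/p}\bigl(\rho_1^{L(p-1)}+\rho_2^{Lp}\bigr)^{1/p}\to0$ as $L\to\infty$, and combining with the bound on $\mathbb{E}[S^p]^{1/p}$ we get $a_k\le B_1+\eta_L(a_{k-L}+1)$ for all $k\ge L+1$.

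Finally I would fix $L$ so large that $\eta_L\le\tfrac12$; since $a_k\le e^{(L-1)(p-1)\sigma^2/2}\le B_1$ for $1\le k\le L$, an induction on $k$ applied to $a_k\le B_1+\tfrac12(a_{k-L}+1)$ yields $a_k\le 3B_1+1$ for every $k$, which is the claim with $C=3e^{L(p-1)\sigma^2/2}+1=C(p,\sigma^2,d)$. I expect the crux to be exactly the passage from a single scale to $L$ scales: a one‑scale version of this argument leaves behind the Rosenthal constant $C(p)$, which need not be smaller than $1$, whereas over $L$ scales the geometric factors $\rho_1^{L(p-1)},\rho_2^{Lp}$ — made explicit by the lognormal moment \eqref{e.Gaussian.int} and the tree structure of the weights $V_z$ — dominate $C(p)$ precisely because $\rho_1,\rho_2<1$ under the stated moment condition. (When $p=2$ no Rosenthal constant appears, the conditional sum being controlled by orthogonality, and the bound already follows with $L=1$ under the weaker condition $\sigma^2<d\log3$.)
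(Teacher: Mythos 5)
Your proof is correct, but it follows a genuinely different route from the paper's. The paper telescopes $f_{k-1}-1=\sum_{j=1}^{k-1}(W_j-1)\prod_{l=j+1}^{k-1}W_l$; the small-scale factors $W_1,\ldots,W_{j-1}$ drop out algebraically, so each summand is a weighted sum of i.i.d.\ mean-zero increments at scale $j$, and one application of Rosenthal's inequality per scale (conditioning on the larger-scale fields) produces a contribution decaying geometrically in $k-j$ under~\eqref{e.sigma.cond.one}. Summing the telescope gives the uniform bound in a single pass, with the Rosenthal constant $C(p)$ appearing harmlessly once per scale against a decaying factor. You instead keep the small-scale fields intact, packaging them into $Y_z\overset{d}{=}X_{k-L}$ and extracting the self-improving recursion $a_k\le B_1+\eta_L(a_{k-L}+1)$, which you then close by induction. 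The extra ingredient you need — taking the block length $L$ large so that $\rho_1^{L(p-1)}$ and $\rho_2^{Lp}$ overwhelm $C(p)$ — is precisely what a one-scale recursion lacks, as you correctly observe; the paper's telescope never needs to beat $C(p)$ because the small scales have already been cancelled. Both arguments rest on the same inputs (the lognormal moment identity~\eqref{e.Gaussian.int}, Rosenthal on the triadic tree) and yield the same moment condition, so the difference is organizational: direct summation over scales versus block bootstrapping. Your version is a sound and self-contained alternative, and your closing remark about the role of $L$ is a genuine insight into why the argument is structured this way.
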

\begin{proof}
We first decompose our field as a sum and use the triangle inequality in the expectation to get
\begin{align}
\label{e.triangle.it}
\mathbb{E}\biggl[\biggl| \fint_{\cu_k} W_0\cdots W_{k-1} \biggr|^p\biggr]^{\nicefrac{1}{p}}
& \leq
\mathbb{E} \biggl[\biggl|\sum_{j=0}^{k-1} \fint_{\cu_k} W_{j+1}\cdots W_{k-1}(W_j - 1) \biggr|^p\biggr]^{\nicefrac{1}{p}} + 1
\notag \\ & \leq
\sum_{j=0}^{k-1} \mathbb{E}\biggl[  \biggl| \avsum_{z\in 3^j\mathbb{Z}^d \cap \cu_k} \fint_{z+\cu_j} W_{j+1}\cdots W_{k-1}(W_j - 1) \biggr|^p \biggr]^{\nicefrac{1}{p}} + 1\,,
\end{align}
where for~$j=k-1$ we interpret the integrand as simply~$W_{k-1} - 1$.

For each~$j \in \{0,\ldots,k-1\}$ we let~$\mathbb{E}_j$ denote the expectation conditioned on~$W_{j+1},\ldots, W_{k-1}$ (which for~$j=k-1$ is just the unconditioned expectation). Since~$p\geq 2$ and the fields are independent on different cubes, we can apply the Rosenthal inequality~\eqref{e.Rosenthal} to obtain
\begin{align*}
\lefteqn{
\mathbb{E}_j \biggl[ \biggl|\avsum_{z\in 3^j\mathbb{Z}^d \cap \cu_k} \fint_{z+\cu_j} W_{j+1}\cdots W_k(W_j - 1) \biggr|^p \biggr]
} \quad \quad & \\
& \leq
C(p) 3^{-\frac{dp}{2}(j-k)} \avsum_{z \in 3^j\mathbb{Z}^d \cap \cu_k} \mathbb{E}_j \biggl[\biggl|\fint_{z+\cu_j} W_{j+1}\cdots W_k(W_j - 1) \biggr|^p\biggr]\,.
\end{align*}
Since the fields~$W_l$ with~$l\geq j$ are constant on each box~$z+\cu_j$, we can factor them out of the integral. Then taking an expectation of the above expression, using independence of the~$W_j$ and~\eqref{e.Gaussian.int},
\begin{align*}
\mathbb{E}\biggl[  \avsum_{z\in 3^j\mathbb{Z}^d \cap \cu_k} \biggl|\fint_{z+\cu_j} W_{j+1}\cdots W_k(W_j - 1) \biggr|^p \biggr]
& \leq
C(p) 3^{-\frac{dp(k-j)}{2}}\mathbb{E}[(W_{j+1}\cdots W_k)^p]  \mathbb{E}\bigl[\bigl| W_j - 1\bigr|^p\bigr]
\\ & \leq
C(p,\sigma^2) 3^{-(k-j)\bigl( \frac{dp}{2}-\frac{\sigma^2 p (p-1)}{2}\bigr)}\,.
\end{align*}
The condition~\eqref{e.sigma.cond.one} ensures that this is summable from~$j=0$ to~$k-1$. Then substituting in~\eqref{e.triangle.it} and summing concludes the proof.
\end{proof}

Note that by controlling the supremum over sub-cubes by the~$\ell^p$ norm and correcting for the averaging factor we have, for any~$p\in [1,\infty)$,
\begin{equation}
\label{e.B.for.appendix}
A_m(\cu_n) \leq \sum_{k=-\infty}^{m-1} 3^{(2t-\nicefrac{d}{p})(k-n)} \biggl(\avsum_{z\in 3^k\mathbb{Z}^d \cap \cu_n} \biggl| \fint_{z+\cu_k} f_m \biggr|^p \biggr)^{\nicefrac{1}{p}} \,,
\end{equation}
and similarly for~$B_m(\cu_n)$. We will use this expression for an optimized choice of~$p$ in each of the three following lemmas.

\begin{lemma}
\label{l.A.terms}
If~$\sigma \leq 1$,
\begin{equation}
\frac{\sigma}{4}(\sqrt{8d}-\sigma) < t <1\,,
\end{equation}
and~$m\leq n$ then
\begin{equation}
\mathbb{P}\bigl( A_m(\cu_n) \geq \delta \bigr) \leq \frac{C}{\delta} 3^{(2t-\sigma\sqrt{\nicefrac{d}{2}})(m-n)}
\end{equation}
\end{lemma}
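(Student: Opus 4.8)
The plan is to bound $\mathbb{E}[A_m(\cu_n)]$ and conclude by Markov's inequality, which gives $\mathbb{P}(A_m(\cu_n)\ge\delta)\le\delta^{-1}\mathbb{E}[A_m(\cu_n)]$. To estimate the expectation I would start from the bound \eqref{e.B.for.appendix} with the exponent chosen to be $p=\sqrt{2d}/\sigma$, so that $d/p=\sigma\sqrt{\nicefrac d2}$; the hypotheses $\sigma\le1$ and $d\ge2$ guarantee $p\ge2$ (needed to invoke Lemma~\ref{l.pth.moment}) and also the hypothesis $\sigma^2<d/(p-1)$ of that lemma, which amounts to $\sigma(\sqrt{2d}-\sigma)<d$. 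Taking the expectation of \eqref{e.B.for.appendix}, applying Jensen's inequality for the concave function $x\mapsto x^{\nicefrac1p}$ to pull the expectation inside the $p$-th root, and using the $\mathbb{Z}^d$-stationarity of $f_m$ (so that $\mathbb{E}[\,|\fint_{z+\cu_k}f_m|^p\,]$ does not depend on $z$), I reduce matters to $\mathbb{E}[A_m(\cu_n)]\le\sum_{k=-\infty}^{m-1}3^{(2t-d/p)(k-n)}\,\mathbb{E}[\,|\fint_{\cu_k}f_m|^p\,]^{\nicefrac1p}$.

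Next I would compute the single-cube moment $\mathbb{E}[\,|\fint_{\cu_k}f_m|^p\,]^{\nicefrac1p}$ for $k\le m-1$. Since $\cu_k\subseteq\cu_j$ for every $j\ge k$, each of the fields $W_k,\dots,W_m$ is constant on $\cu_k$, so on $\cu_k$ we may write $f_m$ as a random constant times $f_{k-1}$, where the constant is the product of at most $m-k+1$ of the independent factors $W_1,\dots,W_m$ and $f_{k-1}\equiv1$ when $k\le0$. By independence of these factors from $f_{k-1}$ the moment factorizes; the Gaussian identity \eqref{e.Gaussian.int} bounds the constant part's $p$-th moment by $e^{(m-k+1)p(p-1)\sigma^2/2}$, while Lemma~\ref{l.pth.moment} gives $\mathbb{E}[\,|\fint_{\cu_k}f_{k-1}|^p\,]^{\nicefrac1p}\le C$. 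Writing $\beta:=(p-1)\sigma^2/2$, this yields $\mathbb{E}[\,|\fint_{\cu_k}f_m|^p\,]^{\nicefrac1p}\le C\,e^{(m-k)\beta}$.

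Substituting this into the sum from the first step and separating the $k$-dependence leaves the geometric series $\sum_{k=-\infty}^{m-1}\big(3^{2t-d/p}e^{-\beta}\big)^{k}$, which converges if and only if $3^{2t-d/p}e^{-\beta}>1$ and is then bounded by a constant times $\big(3^{2t-d/p}e^{-\beta}\big)^{m}$; collecting the surviving powers of $3$ and $e$ (the factors $e^{m\beta}$ cancel) gives $\mathbb{E}[A_m(\cu_n)]\le C\,3^{(2t-d/p)(m-n)}=C\,3^{(2t-\sigma\sqrt{\nicefrac d2})(m-n)}$, which is the claim after Markov's inequality, with $C=C(d,\sigma,t)$. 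Verifying the convergence condition is the crux of the argument: after taking logarithms it reads $(2t-\sigma\sqrt{\nicefrac d2})\ln3>(p-1)\sigma^2/2=\tfrac12\sigma(\sqrt{2d}-\sigma)$, and since the hypothesis $t>\tfrac\sigma4(\sqrt{8d}-\sigma)$ is exactly $2t-\sigma\sqrt{\nicefrac d2}>\tfrac12\sigma(\sqrt{2d}-\sigma)$, this follows from $\ln3>1$ together with $\tfrac12\sigma(\sqrt{2d}-\sigma)>0$ (which holds as $\sqrt{2d}\ge2>\sigma$). I expect the only real care needed beyond this scalar inequality to be the bookkeeping of which of the $W_j$ are constant on a given subcube $\cu_k$ and the accompanying uses of stationarity and independence; there is no further analytic obstacle.
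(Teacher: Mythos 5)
Your proposal is correct and follows essentially the same route as the paper's proof: Markov's inequality applied to $\mathbb{E}[A_m(\cu_n)]$, the bound~\eqref{e.B.for.appendix}, factoring $f_m=(W_k\cdots W_m)f_{k-1}$ on each subcube, using independence together with the Gaussian moment~\eqref{e.Gaussian.int} and Lemma~\ref{l.pth.moment}, then summing a geometric series with the optimized choice $p=\sqrt{2d}/\sigma$. The only cosmetic difference is that the paper silently converts the Gaussian factor $e^{(p-1)\sigma^2/2}$ into a power of $3$ (a valid upper bound since $e<3$), so its summability condition is the cleaner $2t-\nicefrac{d}{p}-\tfrac{\sigma^2(p-1)}{2}>0$, whereas you keep base $e$ and close the gap with $\ln 3>1$; both are correct and yield the same hypothesis on $t$.
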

\begin{proof}
Fix~$p \geq 2$, to be specified later such that it satisfies~\eqref{e.sigma.cond.one}. By H\"older's inequality, independence of the fields,~\eqref{e.Gaussian.int} and Lemma~\ref{l.pth.moment},
\begin{align}
\label{e.E.of.A}
\lefteqn{
\mathbb{E}\biggl[ \sum_{k=-\infty}^{m-1} 3^{(2t-\nicefrac{d}{p})(k-n)} \biggl( \avsum_{z\in 3^k\mathbb{Z}^d \cap \cu_n} \biggl|\fint_{z+\cu_k} f_m\biggr|^p \biggr)^{\nicefrac{1}{p}}\biggr]
} \qquad & \notag \\
& =
\mathbb{E}\biggl[ \sum_{k=-\infty}^{m-1} 3^{(2t-\nicefrac{d}{p})(k-n)} \biggl( \avsum_{z\in 3^k\mathbb{Z}^d \cap \cu_n} (W_k \cdots W_m)^p \biggl| \fint_{z+\cu_k} f_{k-1} \biggr|^p \biggr)^{\nicefrac{1}{p}}\biggr]
\notag \\ & \leq
\sum_{k=-\infty}^{m-1} 3^{(2t-\nicefrac{d}{p})(k-n)} \biggl( \avsum_{z\in 3^k\mathbb{Z}^d \cap \cu_n} \mathbb{E}[(W_k \cdots W_m)^p] \mathbb{E}\biggl[\biggl| \fint_{z+\cu_k} f_{k-1} \biggr|^p\biggr] \biggr)^{\nicefrac{1}{p}}
\notag \\ & \leq
C\sum_{k=-\infty}^{m-1} 3^{(2t-\nicefrac{d}{p})(k-n)} 3^{\frac{\sigma^2(p-1)}{2}(m-k)} \mathbb{E}\biggl[\biggl| \fint_{\cu_k} f_{k-1} \biggr|^p \biggr]^{\nicefrac{1}{p}}
\notag \\ & \leq
C 3^{(2t-\nicefrac{d}{p})(m-n)} \sum_{k=-\infty}^{m-1} 3^{(2t-\nicefrac{d}{p} - \frac{\sigma^2(p-1)}{2})(k-m)}\,.
\end{align}
We now optimize in~$p$, for a given~$\sigma^2$, by choosing~$p = \sqrt{\frac{2d}{\sigma^2}}$, which we note satisfies~\eqref{e.sigma.cond.one} because
\begin{equation*}
\sigma^2 \leq \sigma \leq \sqrt{\frac{d}{2}}\sigma \leq \frac{d}{p-1}\,.
\end{equation*}
We conclude by applying the Markov inequality to~$A_{m}(\cu_n)$ and bounding the expectation by~\eqref{e.E.of.A}.
\end{proof}

\begin{lemma}
\label{l.B.terms}
If~$\sigma^2 \leq \nicefrac{t}{4}$ then there exist constants~$C = C(\sigma^2,t,d)$ and~$D = D(\sigma^2,t,d)$ such that
\begin{equation}
\mathbb{P}\biggl( B_m(\cu_n) \geq D \biggr) \leq C 3^{t (m-n)}
\end{equation}
\end{lemma}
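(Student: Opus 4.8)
\textbf{Proof strategy for Lemma~\ref{l.B.terms}.} The plan is to split $B_m(\cu_n)$ into a bounded deterministic ``mean'' part and a small random ``fluctuation'' part. Since $\mathbb{E}[f_m]\equiv 1$ and $\bigl|\fint_{z+\cu_k}f_m\bigr|\le\bigl|\fint_{z+\cu_k}f_m-1\bigr|+1$, setting $D_0:=(1-3^{-2t})^{-1}$ and
\begin{equation*}
\tilde B_m(\cu_n):=\sum_{k=m}^n 3^{2t(k-n)}\max_{z\in 3^k\mathbb{Z}^d\cap\cu_n}\biggl|\fint_{z+\cu_k}f_m-1\biggr|
\end{equation*}
we have $B_m(\cu_n)\le D_0+\tilde B_m(\cu_n)$ pointwise, because $\sum_{k=m}^n 3^{2t(k-n)}\le\sum_{k=-\infty}^n 3^{2t(k-n)}=D_0$. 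Taking $D:=D_0+1$, it then suffices by Markov's inequality to prove $\mathbb{E}[\tilde B_m(\cu_n)]\le C(\sigma^2,t,d)\,3^{t(m-n)}$.

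For the first moment I would fix $p:=d/t$, which exceeds $2$ since $t<1\le d/2$. Controlling the maximum over sub-cubes by the $\ell^p$ norm exactly as in~\eqref{e.B.for.appendix} (with $f_m-1$ in place of $f_m$), using that $f_m$ is $3^k\mathbb{Z}^d$-stationary for every $k\ge m$ (each factor $W_j$, $j\le m$, is constant on the cubes of $3^j\mathbb{Z}^d\supseteq 3^k\mathbb{Z}^d$ with i.i.d.\ block values), and noting $2t-\nicefrac dp=t$, one gets
\begin{equation*}
\mathbb{E}[\tilde B_m(\cu_n)]\le\sum_{k=m}^n 3^{(2t-\nicefrac dp)(k-n)}\,\mathbb{E}\biggl[\biggl|\fint_{\cu_k}f_m-1\biggr|^p\biggr]^{\nicefrac1p}=\sum_{k=m}^n 3^{t(k-n)}\,\mathbb{E}\biggl[\biggl|\fint_{\cu_k}f_m-1\biggr|^p\biggr]^{\nicefrac1p}\,.
\end{equation*}

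The crux is the estimate $\mathbb{E}\bigl[\bigl|\fint_{\cu_k}f_m-1\bigr|^p\bigr]^{\nicefrac1p}\le C(p,\sigma^2,d)\,3^{-d(k-m)/2}$ for $k\ge m$. To prove it I would use the telescoping identity $f_m-1=\sum_{i=1}^m\bigl(\prod_{j=i+1}^m W_j\bigr)(W_i-1)$ and the triangle inequality in $L^p(\Omega)$. For fixed $i\le m\le k$ the grids $3^j\mathbb{Z}^d$ are nested, so each cube $w+\cu_i$ with $w\in 3^i\mathbb{Z}^d$ lies inside a single cube of each grid $3^j\mathbb{Z}^d$, $j>i$; hence $\prod_{j=i+1}^m W_j$ is constant on $w+\cu_i$, equal to some $c_w$, and
\begin{equation*}
\fint_{\cu_k}\Bigl(\prod_{j=i+1}^m W_j\Bigr)(W_i-1)=\avsum_{w\in 3^i\mathbb{Z}^d\cap\cu_k}c_w\bigl(W_i(w)-1\bigr)\,.
\end{equation*}
Conditioning on $(W_j)_{j>i}$ freezes the $c_w$, and the summands $c_w(W_i(w)-1)$ are then independent and mean zero (the block values of $\varphi_i$ being i.i.d.), so Rosenthal's inequality~\eqref{e.Rosenthal} over the $3^{d(k-i)}$ sub-cubes, followed by taking the expectation and inserting $\mathbb{E}[|c_w|^p]=\prod_{j=i+1}^m\mathbb{E}[W_j^p]=e^{p(p-1)\sigma^2(m-i)/2}$ from~\eqref{e.Gaussian.int} and $\mathbb{E}[|W_i-1|^p]=C(p,\sigma^2)$, gives
\begin{equation*}
\mathbb{E}\biggl[\biggl|\fint_{\cu_k}\Bigl(\prod_{j=i+1}^m W_j\Bigr)(W_i-1)\biggr|^p\biggr]^{\nicefrac1p}\le C\,3^{-d(k-i)/2}\,e^{(p-1)\sigma^2(m-i)/2}\,.
\end{equation*}
Summing over $i$ and writing $k-i=(k-m)+(m-i)$ leaves a geometric series with ratio $3^{-d/2}e^{(p-1)\sigma^2/2}$, which is $<1$ because $(p-1)\sigma^2=(\nicefrac dt-1)\sigma^2\le\nicefrac d4$ by the hypothesis $\sigma^2\le\nicefrac t4$; this yields the crux estimate. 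Plugging it back in,
\begin{equation*}
\mathbb{E}[\tilde B_m(\cu_n)]\le C\sum_{k=m}^n 3^{t(k-n)}3^{-d(k-m)/2}=C\,3^{t(m-n)}\sum_{l=0}^{n-m}3^{(t-d/2)l}\le C(\sigma^2,t,d)\,3^{t(m-n)}\,,
\end{equation*}
since $t<1\le\nicefrac d2$, and the lemma follows.

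\textbf{Main obstacle.} The hard part is the crux estimate. As noted before Example~\ref{ex.field}, the $L^p$ norm of $f_m$ itself blows up exponentially in $m$, so $\fint_{\cu_k}f_m$ cannot be controlled by $\|f_m\|_{\L^p(\cu_k)}$; the needed decay must come entirely from cancellation over the $3^{d(k-m)}$ sub-cubes on which $f_m$ is resolved independently. This is what forces the telescoping-plus-conditioning device above, and the delicate point is the bookkeeping that the averaging gain $3^{-d(k-i)/2}$ at scale $i$ beats the growth $e^{(p-1)\sigma^2(m-i)/2}$ coming from the lower-order factors $W_j$, $j>i$ --- exactly the place where the quantitative assumption $\sigma^2\le\nicefrac t4$ enters.
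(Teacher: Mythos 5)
Your proof is correct, and it takes a genuinely different route from the paper's. The paper works with the centered spatial $p$-th moment $X_{m,k}(\cu_n) = \avsum_z\bigl(|\fint_{z+\cu_k}(f_m-1)|^p - \mathbb{E}[|\fint_{\cu_k}(f_m-1)|^p]\bigr)$ with $p=2d/t$: it bounds $\mathbb{P}(X_{m,k}\ge\cdot)$ by Chebyshev, applying Rosenthal once over the $3^{d(n-k)}$ cubes of scale $k$ (Chebyshev step) and once more over the $3^{d(k-m)}$ cubes of scale $m$ inside $\cu_k$ (to control the $2p$-th moment via Lemma~\ref{l.pth.moment}), and then takes a union bound over $k$. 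You instead show $B_m\le D_0 + \tilde B_m$ pointwise and bound $\mathbb{E}[\tilde B_m(\cu_n)]$ by a single first-moment computation with $p = d/t$, hinging on the sharper pointwise-in-$k$ estimate $\mathbb{E}\bigl[|\fint_{\cu_k}(f_m-1)|^p\bigr]^{1/p}\le C\,3^{-d(k-m)/2}$. That estimate is a genuine refinement of Lemma~\ref{l.pth.moment}: where the lemma only asserts a uniform bound, you exploit the extra averaging from scales $m$ to $k$ via the same telescoping-plus-conditioning-plus-Rosenthal device, gaining the decay factor $3^{-d(k-m)/2}$. Your approach is more economical (one Rosenthal per telescoping scale, a first-moment Markov argument, no union bound, smaller exponent $p$); the paper's is slightly more modular in that it re-uses Lemma~\ref{l.pth.moment} as a black box. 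Both routes use $\sigma^2\le\nicefrac{t}{4}$ in the same place, to ensure the geometric ratio $3^{-d/2}e^{(p-1)\sigma^2/2}<1$ in the Rosenthal bookkeeping (yours directly, the paper's through the hypothesis $\sigma^2<d/(2p-1)$ of Lemma~\ref{l.pth.moment} at parameter $2p$). One cosmetic remark: the conditioning step requires the summands $c_w(W_i(w)-1)$ to be both mean-zero and independent under the conditional law, which you correctly arrange; this is what allows \eqref{e.Rosenthal} to apply (the paper's stated form is a mean-zero version of Rosenthal, even though the hypothesis there is phrased as ``independent random variables'').
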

\begin{proof}
First we fix~$p = \frac{2d}{t}$. We note that~$\sigma^2 < \frac{d}{2p-1}$ so that we can apply Lemma~\ref{l.pth.moment} with parameter~$2p$. We will use the bound~\eqref{e.B.for.appendix} in the analogous form for~$B_m(\cu_n)$, so it is convenient to define the random variable
\begin{equation*}
X_{m,k}(\cu_n) = \avsum_{z\in 3^k\mathbb{Z}^d \cap \cu_n} \biggl( \biggl| \fint_{z+\cu_k} (f_m-1) \biggr|^p - \mathbb{E}\biggl[\biggl| \fint_{\cu_k} (f_m-1) \biggr|^p\biggr]\biggr)\,,
\end{equation*}
because using Lemma~\ref{l.pth.moment} and the triangle inequality
\begin{align}
\label{e.split.B}
B_m(\cu_n) \leq \sum_{k=m}^n 3^{(2t-\nicefrac{d}{p})(k-n)} X_{m,k}^{\nicefrac{1}{p}}(\cu_n) + C(p,\sigma^2,d,s)\,.
\end{align}
In order to control the first term above we use the Markov inequality, Rosenthal's inequality in the form~\eqref{e.Rosenthal}, noting that there are~$3^{d(n-k)}$ elements~$z\in 3^k \mathbb{Z}^d \cap \cu_n$, and finally Lemma~\ref{l.pth.moment}, to bound
\begin{align*}
\mathbb{P}\biggl[ X_{m,k}(\cu_n) \geq \delta \biggr]
& \leq \frac{1}{\delta^2 3^{d(n-k)}}\mathbb{E}\biggl[\biggl( \biggl| \fint_{\cu_k} (f_m-1) \biggr|^p - \mathbb{E}\biggl[\biggl| \fint_{\cu_k} (f_m-1) \biggr|^p\biggr]\biggr)^2\biggr]
\\ & \leq
\frac{1}{\delta^2 3^{d(n-k)}}\mathbb{E}\biggl[ \biggl| \fint_{\cu_k} (f_m-1) \biggr|^{2p} \biggr]
\\ & =
\frac{1}{\delta^2 3^{d(n-k)}}\mathbb{E}\biggl[ \biggl| \avsum_{z\in 3^m\mathbb{Z}^d \cap \cu_k} \fint_{z+\cu_m} (f_m-1) \biggr|^{2p} \biggr]
\\ & \leq
\frac{C}{\delta^2 3^{d(n-k)} 3^{\frac{dp(k-m)}{2}}} \mathbb{E}\biggl[ \biggl| \fint_{\cu_m} (f_m-1) \biggr|^{2p} \biggr]
\\ & \leq
\frac{C}{\delta^2 3^{d(n-k)} 3^{\frac{dp}{2}(k-m)}}\,.
\end{align*}
Then
\begin{align*}
\mathbb{P}\bigl( \exists k \in [m,n] : X_{m,k}(\cu_n) \geq 3^{(tp-d)(n-k)} \bigr)
& \leq
\sum_{k=m}^n \mathbb{P}\bigl(X_{m,k}(\cu_n) \geq 3^{(tp-d)(n-k)} \bigr)
\\ & \leq
C\sum_{k=m}^n 3^{(2tp-d)(k-n)}3^{\frac{dp}{2}(m-k)}
\end{align*}
In the last factor we can bound~$3^{\frac{dp}{2}(m-k)} \leq 3^{t p (m-k)}$, so that if~$D = \sum_{k=-\infty}^n 3^{(t-\nicefrac{d}{p})(k-n)}$ then
\begin{align*}
\mathbb{P}\biggl( \sum_{k=m}^n 3^{(2t-\nicefrac{d}{p})(k-n)} X_{m,k}^{\nicefrac{1}{p}}(\cu_n) \geq D  \biggr)
& \leq
\mathbb{P}\bigl( \exists k \in [m,n] : X_m(\cu_n) \geq 3^{(tp-d)(n-k)} \bigr)
\\ & \leq
C 3^{t(m-n)}\,.
\end{align*}
Combining this with~\eqref{e.split.B} concludes the proof.
\end{proof}

\begin{lemma}
\label{l.expectation.uniform}
If~$\sigma \leq 1$ and
\begin{equation}
\label{e.sigma.B3}
\frac{\sigma}{4}(\sqrt{8d}-\sigma) < t < 1
\end{equation}
then there exists a constant~$C = C(\sigma^2,t,d)$ such that for every~$m,n \in \mathbb{N}$
\begin{equation*}
\mathbb{E}[ \| f_m\|_{\Bring_{\infty,1}^{-2t}(\cu_n)} ] \leq C\,.
\end{equation*}
\end{lemma}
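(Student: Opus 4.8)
The plan is to bound $\mathbb{E}\bigl[\|f_m\|_{\Bring_{\infty,1}^{-2t}(\cu_n)}\bigr]$ uniformly in $m,n$ by first reducing to the case $m\le n$ and then splitting the sum defining the norm at the scale $k=m$. For the reduction: when $k\le n\le m$ the cubes $z+\cu_k$, $z\in 3^k\mathbb{Z}^d\cap\cu_n$, tile $\cu_n$, and the fields $W_n,\dots,W_m$ are constant on the scale-$n$ cube $\cu_n$, so writing $f_m=f_{n-1}\cdot(W_n\cdots W_m)$ gives $\fint_{z+\cu_k}f_m=(W_n\cdots W_m)|_{\cu_n}\,\fint_{z+\cu_k}f_{n-1}$. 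Since $(W_n\cdots W_m)|_{\cu_n}>0$ it pulls out of every maximum and absolute value, whence $\|f_m\|_{\Bring_{\infty,1}^{-2t}(\cu_n)}=(W_n\cdots W_m)|_{\cu_n}\,\|f_{n-1}\|_{\Bring_{\infty,1}^{-2t}(\cu_n)}$; the two factors depend on disjoint families of the independent fields and $\mathbb{E}[W_j]=1$, so $\mathbb{E}\bigl[\|f_m\|_{\Bring_{\infty,1}^{-2t}(\cu_n)}\bigr]=\mathbb{E}\bigl[\|f_{n-1}\|_{\Bring_{\infty,1}^{-2t}(\cu_n)}\bigr]$ and it suffices to treat $m\le n$.

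For $m\le n$, proceeding as for~\eqref{e.B.for.appendix} but keeping all scales $k\le n$, and taking expectations with Jensen and $\mathbb{Z}^d$-stationarity, we get for any $p\in[1,\infty)$
\begin{equation*}
\mathbb{E}\bigl[\|f_m\|_{\Bring_{\infty,1}^{-2t}(\cu_n)}\bigr]\le\sum_{k=-\infty}^n 3^{(2t-\nicefrac{d}{p})(k-n)}\,\mathbb{E}\Bigl[\bigl|\fint_{\cu_k}f_m\bigr|^p\Bigr]^{\nicefrac{1}{p}}.
\end{equation*}
I would then estimate the moment in two regimes. For $k\ge m$, the cube $\cu_k$ tiles into $3^{d(k-m)}$ scale-$m$ subcubes $w+\cu_m$, and the averages $Y_w\coloneqq\fint_{w+\cu_m}f_m=W_m(w)\,\fint_{w+\cu_m}f_{m-1}$ are independent and identically distributed with $\mathbb{E}[Y_w]=1$; by~\eqref{e.Gaussian.int} and Lemma~\ref{l.pth.moment} (valid when $p\ge2$ and $\sigma^2<\nicefrac{d}{p-1}$) one has $\mathbb{E}[|Y_w|^p]\le C$, so Rosenthal's inequality in the form~\eqref{e.Rosenthal} yields $\mathbb{E}[|\fint_{\cu_k}f_m-1|^p]^{\nicefrac{1}{p}}\le C\,3^{-\nicefrac{d(k-m)}{2}}$ and hence $\mathbb{E}[|\fint_{\cu_k}f_m|^p]^{\nicefrac{1}{p}}\le C$. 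For $k<m$, the fields $W_k,\dots,W_m$ are constant on $\cu_k$, so $\fint_{\cu_k}f_m=(W_k\cdots W_m)|_{\cu_k}\,\fint_{\cu_k}f_{k-1}$, and independence together with~\eqref{e.Gaussian.int} and Lemma~\ref{l.pth.moment} gives $\mathbb{E}[|\fint_{\cu_k}f_m|^p]^{\nicefrac{1}{p}}\le C\,3^{\nicefrac{\sigma^2(p-1)(m-k)}{2}}$ — the same bound that appears in~\eqref{e.E.of.A} (for $k\le0$ every $W_j$ is constant on $\cu_k$ and this is immediate).

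Inserting these, the $k\ge m$ part of the sum is $\le C\sum_{k=m}^n 3^{(2t-\nicefrac{d}{p})(k-n)}$ and the $k<m$ part is $\le C\,3^{(2t-\nicefrac{d}{p})(m-n)}\sum_{j\ge1}3^{-(2t-\nicefrac{d}{p}-\nicefrac{\sigma^2(p-1)}{2})j}$, and both are bounded uniformly in $m\le n$ once $2t-\nicefrac{d}{p}-\nicefrac{\sigma^2(p-1)}{2}>0$ (which in particular forces $2t-\nicefrac{d}{p}>0$). The map $p\mapsto\nicefrac{d}{p}+\nicefrac{\sigma^2(p-1)}{2}$ is minimized at $p=\sqrt{\nicefrac{2d}{\sigma^2}}$ with minimum $\sigma\sqrt{2d}-\nicefrac{\sigma^2}{2}$, so with that choice the condition reads $2t>\sigma\sqrt{2d}-\nicefrac{\sigma^2}{2}$, i.e.\ $t>\tfrac{\sigma}{4}(\sqrt{8d}-\sigma)$, which is exactly the hypothesis~\eqref{e.sigma.B3}; and this $p$ also satisfies $p\ge2$ and $\sigma^2<\nicefrac{d}{p-1}$ because $\sigma\le1$ and $d\ge2$ (as already checked in the proof of Lemma~\ref{l.A.terms}), so the appeals to Lemma~\ref{l.pth.moment} and~\eqref{e.Rosenthal} are legitimate. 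This yields $\mathbb{E}\bigl[\|f_m\|_{\Bring_{\infty,1}^{-2t}(\cu_n)}\bigr]\le C(\sigma^2,t,d)$.

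The main obstacle is the estimate at the scales $k>m$: one has to recognize $\fint_{\cu_k}f_m$ as an average of $3^{d(k-m)}$ i.i.d.\ mean-one variables and exploit the resulting concentration, which is what keeps those scales harmless in spite of the multiplicative structure of $f_m$; after that the only delicate point is choosing a single $p$ making both geometric series summable, and it turns out to be the exponent already used for Lemma~\ref{l.A.terms} and to reproduce precisely~\eqref{e.sigma.B3}. The reduction from $m\ge n$ to $m<n$ is genuinely needed, since for $m>n$ the $k<m$ estimate alone produces a factor $3^{\nicefrac{\sigma^2(p-1)(m-n)}{2}}$ that grows without bound in $m$.
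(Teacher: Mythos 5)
Your proof is correct and follows essentially the same route as the paper's: reduce to $m\le n$ by pulling out the fields $W_j$ with $j\ge n$ (which are constant on $\cu_n$ and have mean one), then bound the norm via the $\ell^p$-relaxation as in~\eqref{e.B.for.appendix}, apply independence and~\eqref{e.Gaussian.int} together with Lemma~\ref{l.pth.moment} to the factored moments, and optimize in $p$ exactly as for Lemma~\ref{l.A.terms}. The one place you go beyond the paper's terse ``exactly as in~\eqref{e.E.of.A} but summing over the full range of scales'' is the scales $k\ge m$: the paper implicitly absorbs them into the bound $C\,3^{\sigma^2(p-1)(n-k)/2}$ (which is weaker than the truth but still summable under~\eqref{e.sigma.B3}), whereas you argue directly via Rosenthal that $\mathbb{E}[|\fint_{\cu_k}f_m|^p]^{1/p}\le C$ there, since $\fint_{\cu_k}f_m$ is an average of $3^{d(k-m)}$ i.i.d.\ mean-one variables; this is cleaner and makes the two regimes transparent, but leads to the same binding summability constraint $2t-\nicefrac{d}{p}-\nicefrac{\sigma^2(p-1)}{2}>0$ and hence the same choice $p=\sqrt{\nicefrac{2d}{\sigma^2}}$ and the hypothesis~\eqref{e.sigma.B3}.
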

\begin{proof}
First we note that if~$m\geq n$ then by factoring out the~$W_j$ with~$j\geq n$ we obtain
\begin{equation}
\mathbb{E}[\|f_m\|_{\Bring_{\infty,1}^{-2t}(\cu_n)}] = \mathbb{E}[\|f_{n-1}\|_{\Bring_{\infty,1}^{-2t}(\cu_n)}]\,.
\end{equation}
At scale~$k$ we factor out the fields~$W_j$ with~$j\geq k$ which we then estimate by~\eqref{e.Gaussian.int}. Then exactly as in~\eqref{e.E.of.A} but summing over the full range of scales, we obtain
\begin{align}
\label{e.the.sum}
\mathbb{E}\biggl[ \sum_{k=-\infty}^n 3^{(2t-\nicefrac{d}{p})(k-n)} \biggl( \avsum_{z\in 3^k\mathbb{Z}^d \cap \cu_n} \biggl|\fint_{z+\cu_k} f_m\biggr|^p \biggr)^{\nicefrac{1}{p}}\biggr]
\leq
C \sum_{k=-\infty}^n 3^{(2t-\nicefrac{d}{p} - \frac{\sigma^2(p-1)}{2})(k-n)}\,.
\end{align}
Choosing~$p = \sqrt{\nicefrac{2d}{\sigma^2}}$, the condition that the sum in~\eqref{e.the.sum} be finite is our assumption~\eqref{e.sigma.B3}.
\end{proof}

\begin{proof}[Proof of Proposition~\ref{p.example.Besov}]
We decompose~$\|f\|_{\Bring_{\infty,1}^{-2t}(\cu_n)}$ as in~\eqref{e.triangle.norm}. Using Lemma~\ref{l.A.terms}
\begin{align*}
\mathbb{P}\biggl( \sum_{m=1}^{\lceil \nicefrac{n}{2} \rceil} \frac{A_m(\cu_n)}{m^3} \geq C\delta \biggr) & \leq \sum_{m=1}^{\lceil \nicefrac{n}{2} \rceil} \mathbb{P}(A_m(\cu_n) \geq \delta m^3) \leq \sum_{m=1}^{\lceil n^\beta \rceil} \frac{C}{\delta m^3} 3^{(2t-\sigma\sqrt{\nicefrac{d}{2}})(m-n)} \\
& \leq \frac{C}{\delta} 3^{-\frac{n}{2}(2t-\sigma\sqrt{\nicefrac{d}{2}})}\,.
\end{align*}
This is summable in~$n$, so by the Borel Cantelli lemma and since~$\delta > 0$ was arbitrary, it follows that
\begin{equation}
\mathbb{P}\biggl( \limsup_{n\to\infty}  \sum_{m=1}^{\lceil \nicefrac{n}{2} \rceil} \frac{A_m(\cu_n)}{m^3} = 0 \biggr) = 1\,.
\end{equation}
We conclude similarly that for~$D$ as in Lemma~\ref{l.B.terms},
\begin{equation}
\mathbb{P}\biggl( \limsup_{n\to\infty}  \sum_{m=1}^{\lceil \nicefrac{n}{2} \rceil} \frac{B_m(\cu_n)}{m^3} \leq D \biggr) = 1\,.
\end{equation}
Finally, since
\begin{equation}
\sum_{m=\lceil \nicefrac{n}{2} \rceil + 1}^\infty m^{-3}\mathbb{E}[\|f_m\|_{\Bring_{\infty,1}^{-2t}(\cu_n)}] \leq C \sum_{m=\lceil \nicefrac{n}{2} \rceil + 1}^\infty \frac{1}{m^3}\leq \frac{C}{n^{2}}
\end{equation}
is summable in~$n$, we obtain
\begin{equation}
\mathbb{P}\biggl( \limsup_{n\to\infty}  \sum_{m=\lceil \nicefrac{n}{2} \rceil + 1}^\infty m^{-3}\|f_m\|_{\Bring_{\infty,1}^{-2t}(\cu_n)} = 0 \biggr) = 1\,,
\end{equation}
which concludes the proof.
\end{proof}

\subsubsection*{\bf Acknowledgments}
The author was partially supported by NSF grant DMS-2350340. The author acknowledges the support of the Natural Sciences and Engineering Research Council of Canada (NSERC) through a PGS-D award (598693-2025). L'auteur remercie le Conseil de recherches en sciences naturelles et en g\'enie du Canada (CRSNG) de son soutien (PGS-D 598693-2025).

\printbibliography

\end{document}